\newcommand{\pp}{{\mathbb P}}
\newcommand{\RR}{{\mathbb R}}
\newcommand{\C}[1]{C^{#1}}
\newcommand{\bracket}[1]{\left( #1 \right)}
\newcommand{\set}[1]{\left\{#1\right\}}
\newcommand{\tset}[1]{\{#1\}}
\newcommand{\defset}[2]{\set{#1:\,#2}}
\newcommand{\tri}{\Delta}
\newcommand{\PSR}[1]{#1_{\mathrm{PS}}}
\newcommand{\PST}{\PSR{\tri}}
\newcommand{\Tsym}{T_{\mathrm{S}}}
\newcommand{\splspace}[3]{{\mathbb S}_{#1}^{#2}(#3)}
\newcommand{\splspaceb}{\mathcal{S}}
\newcommand{\splspacedb}{\splspaceb{}^*}
\newcommand{\redsplspace}[3]{{\mathbb S}_{#1}^{\mathrm{R}}(#3)}
\newcommand{\redsplspaceb}{\mathcal{S}_{\mathrm{R}}}
\newcommand{\bloss}[1]{\mathcal{B}[#1]}
\newcommand{\blossr}[2]{\bloss{{#1}|_{#2}}}
\newtheorem{theorem}{Theorem}
\newtheorem{lemma}[theorem]{Lemma}
\newtheorem{proposition}[theorem]{Proposition}
\newtheorem{example}[theorem]{Example}
\newtheorem{remark}[theorem]{Remark}
\begin{document}


\title{Using Geometric Symmetries to Achieve Super-Smoothness for Cubic Powell--Sabin Splines}

\author[$\dagger$,$\ddag$]{Jan Gro\v{s}elj}
\affil[$\dagger$]{Faculty of Mathematics and Physics, University of Ljubljana, Slovenia}
\affil[$\ddag$]{Institute of Mathematics, Physics and Mechanics, Ljubljana, Slovenia}
\author[$\star$]{Hendrik Speleers}
\affil[$\star$]{Department of Mathematics, University of Rome Tor Vergata, Italy}

\date{}

\maketitle

\begin{abstract}\noindent
In this paper, we investigate $\C{2}$ super-smoothness of the full $\C{1}$ cubic spline space on a Powell--Sabin refined triangulation, for which a B-spline basis can be constructed. 
Blossoming is used to identify the $\C{2}$ smoothness conditions between the functionals of the dual basis. Some of these conditions can be enforced without difficulty on general triangulations. Others are more involved but greatly simplify if the triangulation and its corresponding Powell--Sabin refinement possess certain symmetries.
Furthermore, it is shown how the $\C{2}$ smoothness constraints can be integrated into the spline representation by reducing the set of basis functions. 
As an application of the super-smooth basis functions, a reduced spline space is introduced that maintains the cubic precision of the full $\C{1}$ spline space.
\end{abstract}


\section{Introduction}

Smooth polynomial spline spaces over general triangulations are known for their dependency on the mesh geometry which complicates the identification of degrees of freedom, entangles the analysis of splines, and often jeopardizes their stability. One of the standard approaches for constructing splines of this type is by using macro-element techniques: the above notorious problems can be avoided by applying additional restrictions to the general definition, e.g., extra refinement of the triangulation, super-smoothness in certain regions of the domain, elimination of derivative degrees of freedom, and combinations of these constraints. An important feature of macro-element methods is that splines are constructed locally over each triangle of the triangulation with little or no dependence on the neighbouring triangles.

A majority of macro-element techniques have emerged from the finite element method for numerically solving partial differential equations via the variational approach (see, e.g., \cite{fem_ciarlet_02}). The geometric interpretation of these techniques in terms of the Bernstein--B\'ezier representation has simplified the analysis of splines and enabled many of their generalizations, especially to higher smoothness orders (see, e.g., \cite{lai_07}). Recently, efforts have been made to describe such spline spaces by B-spline basis functions which automatically incorporate smoothness conditions, are locally supported, form a nonnegative partition of unity, and are distinguished by excellent stability properties.

A macro-element technique that has proven to be well suited for the development of B-spline methods is the refinement of the triangulation by the Powell--Sabin 6-split \cite{ps_powell_77}. For the standard $\C{1}$ quadratic splines over such a refinement, a B-spline basis was constructed in \cite{ps_dierckx_97} and influenced several extensions to higher polynomial degrees and smoothness orders. Among these are the specifically tailored B-spline representations of $\C{2}$ quintic splines \cite{ps5_speleers_10,ps5_speleers_12} and almost $\C{2}$ quartic splines \cite{ps4_barrera_21,ps4_groselj_16} as well as more general constructions of B-spline bases for splines of arbitrary smoothness \cite{ps_speleers_13,ps_speleers_15} and degree \cite{ps_groselj_16}.

Special attention has been lately dedicated to $\C{1}$ cubic Powell--Sabin splines. B-spline representations for specific subspaces were presented in \cite{ps3_lamnii_14} and \cite{ps3_speleers_15}. A B-spline basis for the full space with certain geometric constraints on the underlying triangulation was constructed in~\cite{ps3_groselj_16}. Finally, a nonrestrictive $\C{1}$ B-spline representation was introduced in \cite{ps3_groselj_17}. The latter provides a general framework for describing specific super-smooth cubic Powell--Sabin splines, quadratic Powell--Sabin splines, and even cubic Clough--Tocher splines \cite{ct_clough_65}. The potential of this framework in approximation of functions and numerical solution of partial differential equations was explored in \cite{ps3_groselj_18} and \cite{ps3_groselj_23}.

This paper investigates $\C{2}$ smoothness constraints on $\C{1}$ cubic splines over a Powell--Sabin refinement by exploiting their B-spline representation and its dual characterization derived in \cite{ps3_groselj_17}. The dual basis consists of functionals that can be expressed via blossoming of the polynomials obtained by restricting a spline to certain triangles of the refinement. Since smoothness constraints between polynomials can be elegantly described by blossoming identities, this offers a powerful tool for the study of super-smoothness that is more transparent than resolving relations acquired through the Bernstein--B\'ezier methods.

It is known from \cite{ps3_groselj_17,ps3_groselj_18} that the B-spline representation of $\C{1}$ cubic Powell--Sabin splines allows a straightforward imposition of certain $\C{2}$ smoothness constraints inside triangles of the original triangulation.
In this paper, we further analyze smoothness constraints of such splines and, in order to conceal their macro-structure, aim at the full $\C{2}$ smoothness inside a macro-triangle.
We show that this can be locally achieved when the triangle 6-split is symmetric in the sense that certain edges of the refinement lie on the same line. This result is not practically irrelevant as triangulations are often at least partially structured and as such allow symmetric refinement of a possibly significant number of triangles.

This paper also addresses the implementation of the $\C{2}$ smoothness conditions into the B-spline representation by recombining the original B-spline functions and reducing their number. The procedure resembles the one presented in \cite{ps3_groselj_21} for three-directional triangulations. It offers much more versatility, however, as it can be used on triangulations that are only partially structured. This is thanks to a more general recombination technique which permits the transition from the original B-spline representation in regions that are partitioned in an unstructured way to the new reduced B-spline representation on symmetrically refined parts of the triangulation.

Moreover, the reduction process ensures that the new basis functions still reproduce cubic polynomials, which is crucial for obtaining optimal convergence rates in approximation methods. In this way a better accuracy with respect to the number of degrees of freedom can be achieved as it has been recently demonstrated in \cite{ps3_groselj_23} for uniformly refined triangulations --- these are a specific setting in which the methods presented in this paper are applicable. The reduced B-spline basis also preserves all other important features of the original one such as the properties of local support, nonnegativity, and partition of unity, making it a viable option for computer aided geometric design.

The remainder of the paper is organized as follows. In Section~\ref{sec:c1_space} we review the Powell--Sabin refinement and the properties of the B-spline representation of $\C{1}$ cubic Powell--Sabin splines. In Section~\ref{sec:c2_super_smooth} we study $\C{2}$ super-smoothness conditions for a spline in such a representation. Section~\ref{sec:ss_basis} is dedicated to the reduction of the B-spline functions, and Section~\ref{sec:reduced_space} presents the reduced spline space spanned by the derived basis functions. We conclude with some remarks in Section~\ref{sec:conclusion}.

Throughout the paper, we use the following convention in our notation regarding indices. When we write $a_{ij}$ for some indices $i$ and $j$, it means that this quantity is uniquely determined up to permutation of the indices, so $a_{ij}=a_{ji}$. On the other hand, when we separate the indices by commas, it means that the order of the indices is important, so $a_{i,j}$ and $a_{j,i}$ are different quantities for some indices $i$ and~$j$. Similarly, when we write $a_{i,jk}$ for some indices $i$, $j$, and $k$, it means that this is equivalent to $a_{i,kj}$ but different from, e.g., $a_{k,ij}$.

\section{$C^1$ Spline Space}
\label{sec:c1_space}

In this section we review how a given triangulation $\tri$ of a planar domain is refined into a Powell--Sabin triangulation $\PST$, and how a space of $\C{1}$ cubic splines over $\PST$ can be represented in terms of basis functions mimicking univariate B-splines.

\subsection{Powell--Sabin Refinement}

A triangulation $\Delta = (V, E, T)$ of a polygonal domain $\Omega \subset \RR^2$ is determined by a finite set of vertices $V = \set{v_i}_i$, a finite set of edges $E = \set{e_{ij}}_{ij}$, and a finite set of triangles $T = \set{t_{ijk}}_{ijk}$. Every vertex $v_i \in V$ corresponds to a point in $\Omega$, every edge $e_{ij} \in E$ corresponds to a convex hull $[v_i, v_j]$ of two different vertices $v_i, v_j \in V$, and every triangle $t_{ijk} \in T$ corresponds to a convex hull $[v_i, v_j, v_k]$ of three noncollinear vertices $v_i, v_j, v_k \in V$. In this notation, we identify the edges and triangles by the same set of indices, i.e., the order of indices does not matter. We assume that the union of triangles in $T$ equals $\Omega$ and the intersection of any two triangles from $T$ is an empty set, an element of $V$, or an element of $E$. Naturally, we require that every $v_i \in V$ is a vertex and every $e_{ij} \in E$ is an edge of at least one triangle in $T$. We denote by $E^b \subseteq E$ the set of all edges that lie on the boundary of $\Omega$.

A Powell--Sabin refinement $\PST = (\PSR{V}, \PSR{E}, \PSR{T})$ of $\tri$ is a triangulation obtained by splitting each triangle $t_{ijk} \in T$ into six smaller triangles \cite{lai_07,ps_powell_77}. This is achieved by choosing a triangle split point $v_{ijk}$ inside $t_{ijk}$ such that for an interior edge $e_{ij} \in E \setminus E^b$ that is shared by triangles $t_{ijk}, t_{ijk'} \in T$, the intersection point $v_{ij}$ between $e_{ij}$ and $[v_{ijk}, v_{ijk'}]$
lies inside $e_{ij}$. The point $v_{ij}$ is an edge split point. For a boundary edge $e_{ij} \in E^b$, we choose the edge split point $v_{ij}$ as an arbitrary point inside $e_{ij}$. Together with the vertices from $V$, these split points constitute $\PSR{V}$. For each $t_{ijk} \in T$ we define the triangles
\begin{alignat*}{6}
t_{i,j,k} &= [v_i, v_{ij}, v_{ijk}],\quad & t_{j,k,i} &= [v_j, v_{jk}, v_{ijk}],\quad & t_{k,i,j} &= [v_k, v_{ki}, v_{ijk}], \\
t_{j,i,k} &= [v_j, v_{ij}, v_{ijk}], & t_{k,j,i} &= [v_k, v_{jk}, v_{ijk}], & t_{i,k,j} &= [v_i, v_{ki}, v_{ijk}].
\end{alignat*}
These triangles constitute $\PSR{T}$. Notice that here the indices are separated by commas, indicating that their order is important. Moreover, notice that $t_{i,j,k}\subset t_{ijk}$ in this notation. For each $t_{ijk} \in T$ we also define the edges
\begin{alignat*}{6}
e_{i,j} &= [v_i, v_{ij}],\quad & e_{j,k} &= [v_j, v_{jk}],\quad & e_{k,i} &= [v_k, v_{ki}], \\
e_{j,i} &= [v_j, v_{ij}], & e_{k,j} &= [v_k, v_{jk}], & e_{i,k} &= [v_i, v_{ki}].
\end{alignat*}
Again, notice that the indices are separated by commas and that $e_{i,j}\subset e_{ij}$.
Together with $[v_i, v_{ijk}]$, $[v_j, v_{ijk}]$, $[v_k, v_{ijk}]$ and $[v_{ij}, v_{ijk}]$, $[v_{jk}, v_{ijk}]$, $[v_{ki}, v_{ijk}]$, these edges constitute $\PSR{E}$. We denote by $\PSR{E}^b \subset \PSR{E}$ the set of all edges that lie on the boundary of $\Omega$.

\begin{figure}[t!]
\centering
\includegraphics[width=\textwidth]{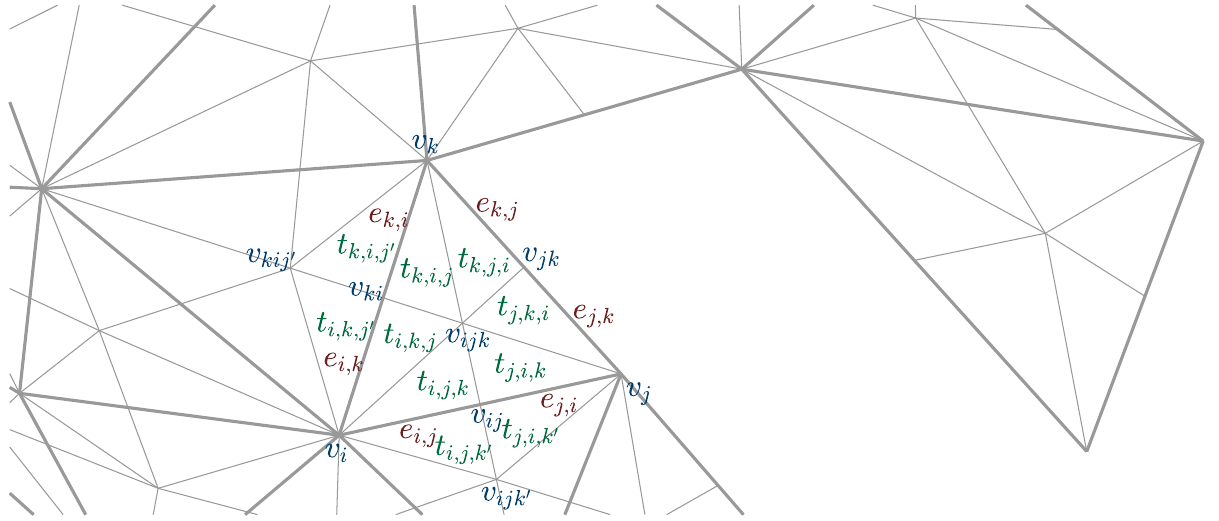}
\caption{An excerpt from a triangulation and its Powell--Sabin refinement.}
\label{fig:pstri}
\end{figure}

For any triangulation $\tri$, there exists a valid triangulation $\PST$. It can be constructed by choosing $v_{ijk}$ as the incenter of $t_{ijk} \in T$. An example of a Powell--Sabin refinement of a triangulation is shown in Figure~\ref{fig:pstri}. The figure also illustrates some notation for the vertices, edges, and triangles of the refinement.

\subsection{Blossoming of a Cubic Polynomial}

Let $\pp_3$ denote the space of bivariate polynomials of total degree at most $3$. The blossom of a polynomial $P \in \pp_3$ is a mapping $\bloss{P}: \RR^2 \times \RR^2 \times \RR^2 \rightarrow \RR$ that is
\begin{itemize}

\item symmetric, i.e., for every $p_1, p_2, p_3 \in \RR^2$, and any permutation $\pi$ of $\set{1,2,3}$,
\begin{equation*}
\bloss{P}(p_1, p_2, p_3) = \bloss{P}(p_{\pi(1)}, p_{\pi(2)}, p_{\pi(3)});
\end{equation*}

\item affine in the first (and hence each) argument, i.e., for every $p, q, p_2, p_3 \in \RR^2$ and every $\alpha \in \RR$,
\begin{equation*}
\bloss{P}((1-\alpha) p + \alpha q, p_2, p_3) = (1-\alpha) \bloss{P}(p, p_2, p_3) + \alpha \bloss{P}(q, p_2, p_3);
\end{equation*}

\item and for every $p \in \RR^2$,
\begin{equation*}
\bloss{P}(p, p, p) = P(p).
\end{equation*}
\end{itemize}
By the blossoming principle, for each $P \in \pp_3$, the mapping $\bloss{P}$ is unique. We refer the reader to, e.g., \cite{seidel_93,bb_speleers_11} for more details.

Blossoming is a convenient tool for expressing smoothness conditions of joints of two or more polynomials.

\begin{lemma}
\label{lem:smoothness_edge}
Suppose that $t_1, t_2 \subset \RR^2$ are two nondegenerate and nonoverlapping triangles with a common edge $e$. Let $S: t_1 \cup t_2 \rightarrow \RR$ be a $\C{0}$ smooth spline
\begin{equation*}
S(p) =
\begin{cases}
P_1(p) & \text{if } p \in t_1, \\
P_2(p) & \text{if } p \in t_2,
\end{cases}
\end{equation*}
determined by polynomials $P_1, P_2 \in \pp_3$. Let $p_1, p_2 \in \RR^2$ be any pair of different points on the line containing $e$.

\begin{enumerate}

\item Let $q \in \RR^2$ be any point not on the line containing $e$. Then, $S$ is $C^1$ smooth across $e$ if and only if%
\begin{subequations}%
\label{eq:c1_smooth_blossom}
\begin{align}
\bloss{P_1}(p_1, p_1, q) &= \bloss{P_2}(p_1, p_1, q), \\
\bloss{P_1}(p_1, p_2, q) &= \bloss{P_2}(p_1, p_2, q), \\
\bloss{P_1}(p_2, p_2, q) &= \bloss{P_2}(p_2, p_2, q).
\end{align}
\end{subequations}

\item Suppose that $S$ is $\C{1}$ smooth, and let $q_1, q_2 \in \RR^2$ be any pair of (possibly equal) points not on the line containing $e$. Then, $S$ is $C^2$ smooth across $e$ if and only if%
\begin{subequations}%
\label{eq:c2_smooth_blossom}
\begin{align}
\bloss{P_1}(p_1, q_1, q_2) &= \bloss{P_2}(p_1, q_1, q_2), \\
\bloss{P_1}(p_2, q_1, q_2) &= \bloss{P_2}(p_2, q_1, q_2).
\end{align}
\end{subequations}

\end{enumerate}
\end{lemma}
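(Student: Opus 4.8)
The plan is to express smoothness across $e$ in terms of directional derivatives along the line $\ell$ containing $e$, and then to translate these derivative conditions into blossom identities via the standard polarization formula. Recall that for $P \in \pp_3$ and any points $x, p, q \in \RR^2$ the blossom encodes the first directional derivative through
\begin{equation*}
D_{q-p} P(x) = 3\bracket{\bloss{P}(x,x,q) - \bloss{P}(x,x,p)},
\end{equation*}
and, iterating, for a second (possibly equal) direction $q'-p$,
\begin{equation*}
D_{q'-p}D_{q-p}P(x) = 6\bracket{\bloss{P}(x,q,q') - \bloss{P}(x,q,p) - \bloss{P}(x,p,q') + \bloss{P}(x,p,p)};
\end{equation*}
both follow from the affinity and symmetry of $\bloss{P}$ (see \cite{seidel_93,bb_speleers_11}). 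Throughout I use that $\set{p_2-p_1,\,q-p_1}$ is a basis of $\RR^2$ whenever $p_1,p_2\in\ell$ are distinct and $q\notin\ell$, so that matching the tangential derivative (direction $p_2-p_1$) together with a single transversal derivative (direction $q-p_1$) along $\ell$ is equivalent to matching the full gradient along $\ell$; the analogous statement holds for second derivatives and the Hessian.

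For part 1, I would first note that $\C{0}$ smoothness forces $P_1$ and $P_2$ to agree on the segment $e$, hence---being cubics restricted to a line---on all of $\ell$; consequently $\bloss{P_1}$ and $\bloss{P_2}$ agree on every triple of points of $\ell$, and all tangential derivatives coincide along $\ell$. Given the $\C{0}$ hypothesis, $\C{1}$ smoothness is therefore equivalent to the matching of the single transversal derivative $D_{q-p_1}(P_1-P_2)$ along $\ell$. By the first polarization formula, for $x\in\ell$ the term $\bloss{P}(x,x,p_1)$ involves only points of $\ell$ and hence already cancels between $P_1$ and $P_2$, so this reduces to $\bloss{P_1}(x,x,q)=\bloss{P_2}(x,x,q)$ for all $x\in\ell$. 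Finally, $x\mapsto\bloss{P}(x,x,q)$ restricted to $\ell$ is a univariate quadratic in the affine parameter of $\ell$, and by the one-dimensional blossoming principle it is determined by its three control values at the pairs $(p_1,p_1)$, $(p_1,p_2)$, $(p_2,p_2)$; equating these yields exactly \eqref{eq:c1_smooth_blossom}.

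For part 2 the argument runs one order higher. Assuming $S$ is $\C{1}$, part 1 shows that $\bloss{P_1}$ and $\bloss{P_2}$ agree on every triple with at least two entries on $\ell$; in particular the second tangential derivative matches (from $\C{0}$) and the mixed tangential--transversal derivative matches (from $\C{1}$), so $\C{2}$ smoothness is equivalent to the matching of the second transversal derivative $D_{q_1-p_1}D_{q_2-p_1}(P_1-P_2)$ along $\ell$. In the second polarization formula, for $x\in\ell$ the three terms $\bloss{P}(x,q_1,p_1)$, $\bloss{P}(x,p_1,q_2)$, $\bloss{P}(x,p_1,p_1)$ each have two entries on $\ell$ and thus cancel, leaving the condition $\bloss{P_1}(x,q_1,q_2)=\bloss{P_2}(x,q_1,q_2)$ for all $x\in\ell$. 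Since $x\mapsto\bloss{P}(x,q_1,q_2)$ is now affine along $\ell$, it is determined by its two values at $x=p_1$ and $x=p_2$, which gives \eqref{eq:c2_smooth_blossom}.

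The main obstacle I expect is the bookkeeping in the reduction step: one must argue carefully that, given $C^{k-1}$ smoothness, every order-$k$ directional derivative except the purely transversal one already agrees along $\ell$, and that a single transversal direction together with the tangential one spans all order-$k$ derivatives. Making this precise amounts to tracking how many arguments of each blossom term lie on $\ell$ and invoking the appropriate lower-order matching; the subsequent reduction of the restricted multiaffine blossom to finitely many control values is then routine via one-dimensional blossoming.
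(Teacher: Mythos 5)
Your proof is correct, but it follows a genuinely different route from the paper's. The paper's proof imports the key equivalence as a black box: it cites Ramshaw's Proposition~21.1, which states that $S$ is $\C{r}$ smooth across $e$ if and only if $\bloss{P_1}(a_1,a_2,a_3)=\bloss{P_2}(a_1,a_2,a_3)$ for all triples of which at least $3-r$ points lie on the line containing $e$; the forward implications are then immediate, and the converse is a multi-affine expansion of arbitrary points $a_1,a_2,a_3$ in terms of $p_1,p_2$ (and $q$, $q_1$, $q_2$). You instead re-derive that characterization from scratch: you translate $\C{1}$ and $\C{2}$ smoothness into matching of directional derivatives, observe that tangential and mixed derivatives already agree by lower-order smoothness, and use the polarization formulas to convert the single remaining purely transversal derivative condition into blossom identities, finishing with the univariate blossoming principle along $\ell$. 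What your approach buys is self-containedness and transparency --- it makes explicit \emph{why} only one transversal condition per smoothness order is needed --- at the cost of relying on the (standard but unproven here) polarization identities, the fact that a bivariate blossom restricted to a line is the univariate blossom of the restriction, and some bookkeeping you acknowledge but leave partly implicit: in part 2 you assert that part 1 yields agreement of the blossoms on \emph{every} triple with at least two entries on $\ell$, which does not follow from the three control equalities alone but requires exactly the affine-combination expansion that constitutes the paper's converse argument (expand each off-line argument in $p_1,p_2,q$ and each on-line argument in $p_1,p_2$). That expansion should be spelled out to make the cancellation of the terms $\bloss{P}(x,q_1,p_1)$, $\bloss{P}(x,p_1,q_2)$, $\bloss{P}(x,p_1,p_1)$ rigorous; once it is, your argument is complete and, in effect, contains a proof of the special case of Ramshaw's proposition that the paper cites.
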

\begin{proof}
According to \cite[Proposition~21.1]{ramshaw_87}, the spline $S$ is $\C{r}$ smooth, $r \in \set{1,2}$, across $e$ if and only if
\begin{equation}
\label{eq:smooth_blossom}
\bloss{P_1}(a_1, a_2, a_3) = \bloss{P_2}(a_1, a_2, a_3)
\end{equation}
for every three points $a_1, a_2, a_3 \in \RR^2$ of which at least $3-r$ lie on the line containing $e$.

First, we consider $r = 1$. Clearly, if $S$ is $\C{1}$ smooth, the equalities in \eqref{eq:c1_smooth_blossom} hold due to \eqref{eq:smooth_blossom}. Conversely, let $a_1, a_2, a_3 \in \RR^2$ be any three points. Since the blossom is symmetric and $S$ is $\C{0}$ smooth, we can assume that $a_1, a_2$ lie on the line containing $e$ and $a_3$ does not. Then, $a_1, a_2$ can be expressed as affine combinations of $p_1, p_2$ and $a_3$ as an affine combination of $p_1, p_2, q$. As the blossom is affine in each argument and $S$ is $\C{0}$ smooth, the equalities in \eqref{eq:c1_smooth_blossom} imply \eqref{eq:smooth_blossom}.

For $r = 2$, we argue similarly. The $\C{2}$ smoothness of $S$ clearly implies \eqref{eq:c2_smooth_blossom}. Conversely, it is sufficient to consider a point $a_1 \in \RR^2$ lying on the line containing $e$ and points $a_2, a_3 \in \RR^2$ not lying on this line. Then, $a_1$ can be expressed as an affine combination of $p_1, p_2$ and $a_2$ as an affine combination of $p_1, p_2, q_1$ and $a_3$ as an affine combination of $p_1, p_2, q_2$. As the blossom is affine in each argument and $S$ is $\C{1}$ smooth, the equalities in \eqref{eq:c2_smooth_blossom} imply \eqref{eq:smooth_blossom}.
\end{proof}

\subsection{Basis Functions}
\label{sec:c1_basis}

The space of $\C{1}$ cubic splines over $\PST$ is defined by
\begin{equation*}
\splspace{3}{1}{\PST} = \defset{S \in \C{1}(\Omega)}{S|_{t_{i,j,k}} \in \pp_3, \ t_{i,j,k} \in \PSR{T}}
\end{equation*}
and its dimension is equal to $3|V| + 4|E| = 3|V| + |\PSR{T}| + |\PSR{E}^b|$; see \cite{ps3_groselj_17}. Therein, a basis of the space suitable for numerical analysis has been derived. In order to express a spline $S \in \splspace{3}{1}{\PST}$ in that basis, we introduce three types of functionals.

For each vertex $v_i \in V$, let $q_i^v = [q_{i,1}^v, q_{i,2}^v, q_{i,3}^v] \subset \RR^2$ be a triangle that contains $v_i$, the point $\frac{2}{3} v_i + \frac{1}{3} v_{ijk}$ for every $t_{ijk} \in T$ with a vertex at $v_i$, and the point $\frac{2}{3} v_i + \frac{1}{3} v_{ij}$ for every $e_{ij} \in E$ with an endpoint at $v_i$; see Figure~\ref{fig:ps3conf} for an illustration. Then, for each $r \in \set{1,2,3}$, let%
\begin{subequations}%
\label{eq:beta}
\begin{equation}
\label{eq:beta_v}
\beta_{i,r}^v(S) = \blossr{S}{t_{i,j,k}}(v_i, v_i, v_i + 3 (q_{i,r}^v - v_i)),
\end{equation}
where $t_{i,j,k} \in \PSR{T}$ is any triangle with a vertex at $v_i$. The functional $\beta_{i,r}^v$ is well defined because $S \in \C{1}(\Omega)$; see Lemma~\ref{lem:smoothness_edge} and the equalities in \eqref{eq:c1_smooth_blossom}. 

\begin{figure}[t!]
\centering
\includegraphics[width=\textwidth]{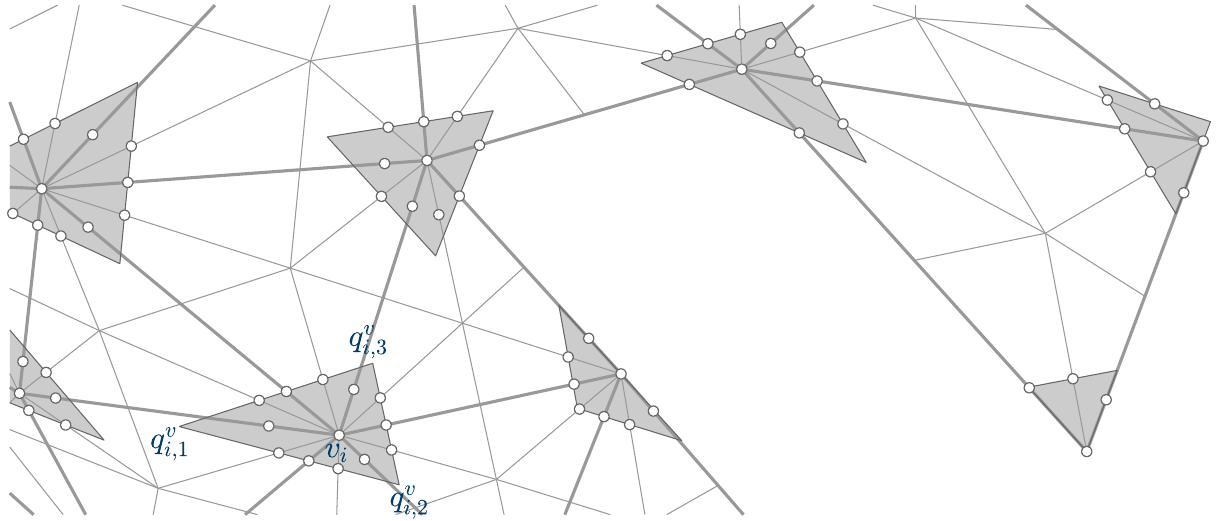}
\caption{Triangles (in gray) associated with vertices of a triangulation. The figure also depicts the points that each of the triangles must contain.}
\label{fig:ps3conf}
\end{figure}

Moreover, for each triangle $t_{i,j,k} \in \PSR{T}$, let
\begin{equation}
\label{eq:beta_t}
\beta_{i,j,k}^t(S) = \blossr{S}{t_{i,j,k}}(v_i, v_j, v_{ijk}),
\end{equation}
and for each boundary edge $e_{i,j} \in \PSR{E}^b$, let
\begin{equation}
\label{eq:beta_e}
\beta_{i,j}^e(S) = \blossr{S}{e_{i,j}}(v_i, v_j, v_{ij}).
\end{equation}
\end{subequations}
We denote the set of functionals defined in \eqref{eq:beta} by $\splspacedb$.

In \cite{ps3_groselj_17}, the following basis functions were constructed.
\begin{subequations}
\label{eq:c1_basis}
\begin{itemize}
\item For each $v_i \in V$ and each $r \in \set{1,2,3}$, let $B_{i,r}^v \in \splspace{3}{1}{\PST}$ be such that
\begin{equation}
\label{eq:c1_basis_v}
\beta(B_{i,r}^v) =
\begin{cases}
1 & \text{if} \ \beta = \beta_{i,r}^v, \\
0 & \text{otherwise},
\end{cases}
\end{equation}
for every $\beta \in \splspacedb$; see Figure~\ref{fig:c1_basis_v} for an example.
\item For each $t_{i,j,k} \in \PSR{T}$, let $B_{i,j,k}^t \in \splspace{3}{1}{\PST}$ be such that
\begin{equation}
\label{eq:c1_basis_t}
\beta(B_{i,j,k}^t) =
\begin{cases}
1 & \text{if} \ \beta = \beta_{i,j,k}^t, \\
0 & \text{otherwise},
\end{cases}
\end{equation}
for every $\beta \in \splspacedb$; see Figure~\ref{fig:c1_basis_t} for an example.
\item For each boundary edge $e_{i,j} \in \PSR{E}^b$, let $B_{i,j}^e \in \splspace{3}{1}{\PST}$ be such that
\begin{equation}
\label{eq:c1_basis_e}
\beta(B_{i,j}^e) =
\begin{cases}
1 & \text{if} \ \beta = \beta_{i,j}^e, \\
0 & \text{otherwise},
\end{cases}
\end{equation}
for every $\beta \in \splspacedb$; see Figure~\ref{fig:c1_basis_e} for an example.
\end{itemize}
\end{subequations}
We denote the set of these functions by $\splspaceb$. The following theorem (see \cite{ps3_groselj_17} for its proof) provides a characterization of $\splspace{3}{1}{\PST}$.

\begin{figure}[t!]
\centering
\includegraphics[width = 0.32\textwidth]{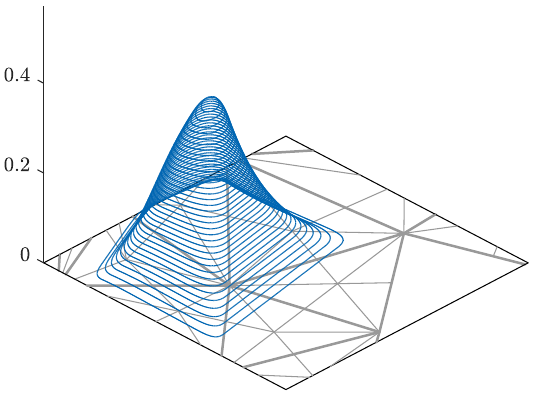}~
\includegraphics[width = 0.32\textwidth]{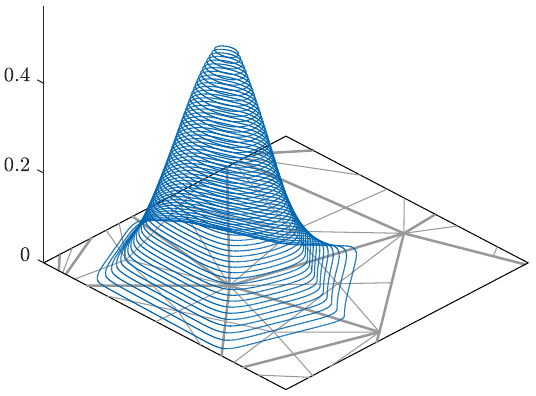}~
\includegraphics[width = 0.32\textwidth]{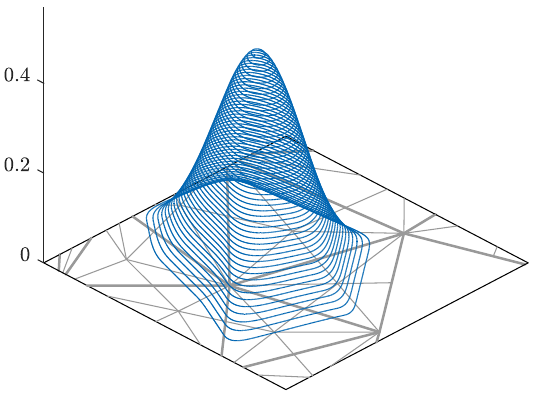}
\caption{The basis functions $B_{i,1}^v$, $B_{i,2}^v$, $B_{i,3}^v$ (from left to right) associated with the vertex $v_i$ and the points $q_{i,1}^v$, $q_{i,2}^v$, $q_{i,3}^v$ shown in Figure~\ref{fig:ps3conf}.}
\label{fig:c1_basis_v}
\end{figure}

\begin{figure}[t!]
\centering
\includegraphics[width = 0.49\textwidth]{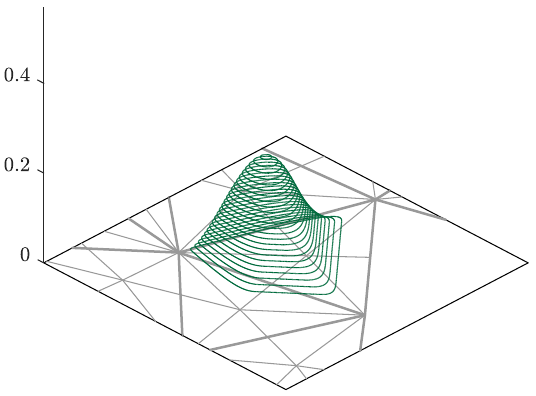}~
\includegraphics[width = 0.49\textwidth]{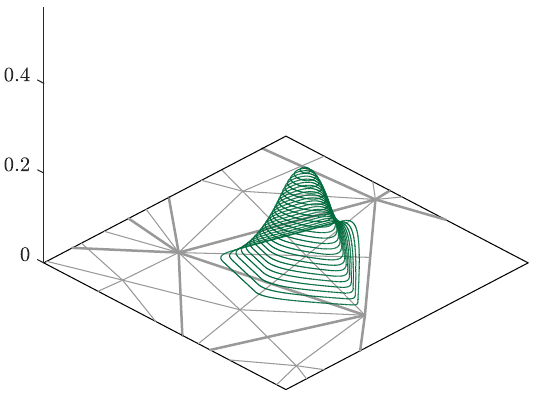}\\
\includegraphics[width = 0.49\textwidth]{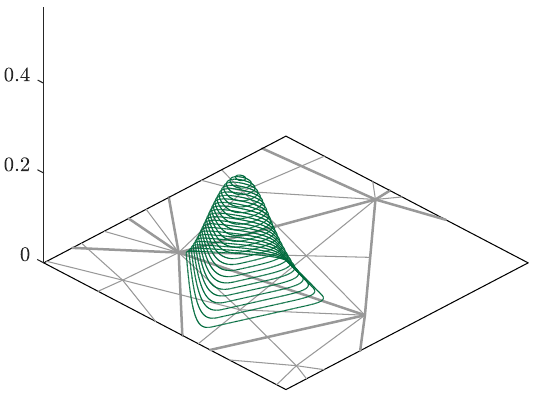}~
\includegraphics[width = 0.49\textwidth]{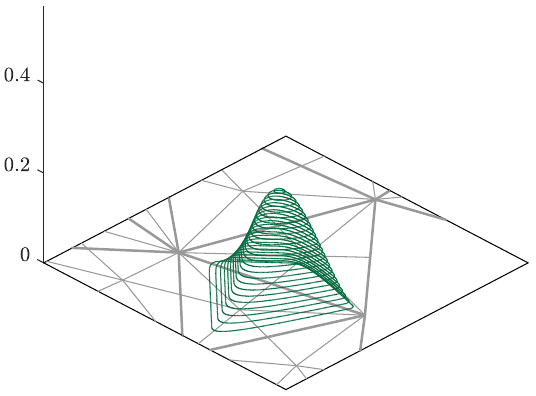}
\caption{The basis functions $B_{i,j,k}^t$, $B_{j,i,k}^t$ (in the upper row) and $B_{i,j,k'}^t$, $B_{j,i,k'}^t$ (in the bottom row) associated with the triangles $t_{i,j,k}$, $t_{j,i,k}$, $t_{i,j,k'}$, $t_{j,i,k'}$ shown in Figure~\ref{fig:pstri}.}
\label{fig:c1_basis_t}
\end{figure}

\begin{figure}[t!]
\centering
\includegraphics[width = 0.49\textwidth]{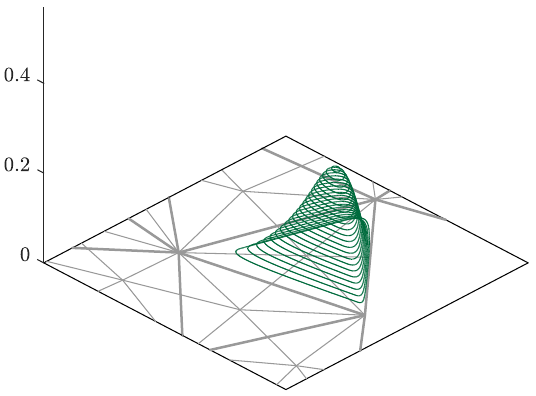}~
\includegraphics[width = 0.49\textwidth]{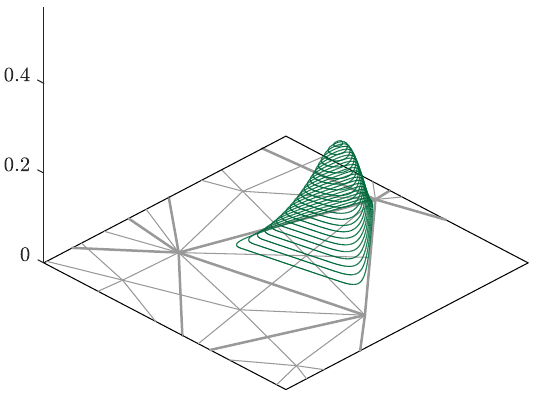}\\
\includegraphics[width = 0.49\textwidth]{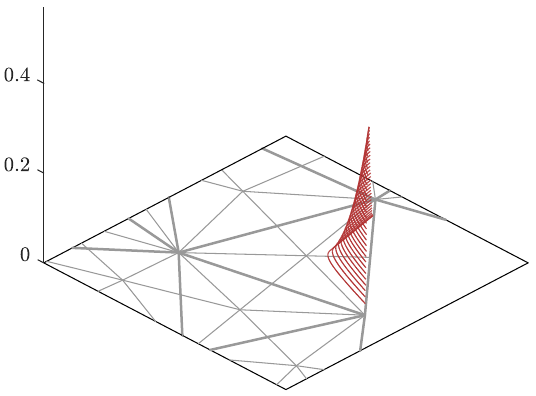}~
\includegraphics[width = 0.49\textwidth]{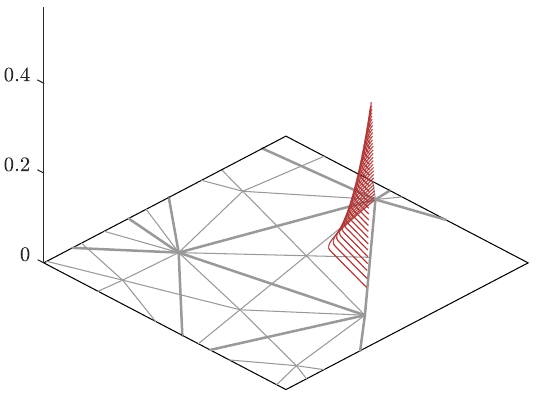}
\caption{The basis functions $B_{j,k,i}^t$, $B_{k,j,i}^t$ (in the upper row) and $B_{j,k}^e$, $B_{k,j}^e$ (in the bottom row) associated with the triangles $t_{j,k,i}$, $t_{k,j,i}$ and the edges $e_{j,k}$, $e_{k,j}$ shown in Figure~\ref{fig:pstri}.}
\label{fig:c1_basis_e}
\end{figure}

\begin{theorem}
\label{th:c1_basis}
The set $\splspaceb$ is a basis of $\splspace{3}{1}{\PST}$. Every $S \in \splspace{3}{1}{\PST}$ can be uniquely expressed as
\begin{equation*}
S = \sum_{v_i \in V} \sum_{r = 1}^3 \beta_{i,r}^v(S) B_{i,r}^v + \sum_{t_{i,j,k} \in \PSR{T}} \beta_{i,j,k}^t(S) B_{i,j,k}^t + \sum_{e_{i,j} \in \PSR{E}^b} \beta_{i,j}^e(S) B_{i,j}^e. 
\end{equation*}
\end{theorem}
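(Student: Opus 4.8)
The plan is to prove the theorem as a statement about a dual basis. Write $N = 3|V| + |\PSR{T}| + |\PSR{E}^b|$ for the common value of the cardinality of $\splspacedb$ and the dimension of $\splspace{3}{1}{\PST}$. The abstract skeleton is standard: if $N$ linear functionals on an $N$-dimensional space are linearly independent, then a unique dual basis exists and the expansion of any element in that dual basis simply reads off the coefficients as the functional values. I would first record that each $\beta \in \splspacedb$ is a well-defined functional on $\splspace{3}{1}{\PST}$: the edge and triangle functionals are single-edge (resp.\ single-triangle) blossom evaluations and hence unambiguous, while the vertex functional $\beta_{i,r}^v$ is independent of the chosen micro-triangle $t_{i,j,k}$ by the $\C{1}$ conditions \eqref{eq:c1_smooth_blossom} of Lemma~\ref{lem:smoothness_edge}, as already noted after \eqref{eq:beta_v}.

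Granting existence of the dual basis, both claims follow at once. Applying some $\beta' \in \splspacedb$ to an expansion $S = \sum_\beta c_\beta B_\beta$ and using $\beta'(B_\beta) = \delta_{\beta,\beta'}$ yields $c_{\beta'} = \beta'(S)$, which simultaneously forces the coefficients to equal the functional values (uniqueness and the displayed formula) and shows that $\splspaceb$ is linearly independent; since $|\splspaceb| = N = \dim \splspace{3}{1}{\PST}$, linear independence alone makes $\splspaceb$ a basis. Everything therefore reduces to the single point that the evaluation map $\Lambda : S \mapsto (\beta(S))_{\beta \in \splspacedb}$ is bijective, and by the dimension match it suffices to prove injectivity: that $\beta(S) = 0$ for every $\beta \in \splspacedb$ forces $S \equiv 0$.

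I would establish injectivity locally, macro-triangle by macro-triangle. Fix $t_{ijk} \in T$ and suppose all associated functionals vanish. At the vertex $v_i$ the three numbers $\beta_{i,r}^v(S)$, $r \in \set{1,2,3}$, are the values of the map $x \mapsto \blossr{S}{t_{i,j,k}}(v_i, v_i, x)$ at the points $v_i + 3(q_{i,r}^v - v_i)$; this map is affine in $x$, the three evaluation points are affinely independent because $q_i^v$ is a nondegenerate triangle, so its vanishing forces $\blossr{S}{t_{i,j,k}}(v_i, v_i, \cdot) \equiv 0$, i.e.\ $S(v_i)=0$ and $\nabla S(v_i)=0$ (and likewise at $v_j, v_k$). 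Combining these vanishing vertex jets with the $\C{1}$ blossom identities \eqref{eq:c1_smooth_blossom} across the six interior edges $[v_i,v_{ijk}]$, $[v_j,v_{ijk}]$, $[v_k,v_{ijk}]$, $[v_{ij},v_{ijk}]$, $[v_{jk},v_{ijk}]$, $[v_{ki},v_{ijk}]$, with the vanishing triangle functionals $\beta_{i,j,k}^t(S) = \blossr{S}{t_{i,j,k}}(v_i, v_j, v_{ijk})$, and, on boundary edges, with $\beta_{i,j}^e(S)$, I would assemble the finite linear system satisfied by the blossom values (equivalently the Bernstein--B\'ezier control values) of the six cubic pieces and show it admits only the trivial solution. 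This is the step I expect to be the main obstacle: it is fixed, geometry-independent combinatorial bookkeeping, but one must verify that the particular evaluation points chosen in \eqref{eq:beta} make the local system \emph{nonsingular} rather than merely square. Once local vanishing is obtained on every macro-triangle, $S \equiv 0$, so $\Lambda$ is injective and hence bijective; the dual basis $\splspaceb$ exists and is unique, and the representation formula follows from biorthogonality as above.
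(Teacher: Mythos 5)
First, note that the paper does not prove Theorem~\ref{th:c1_basis} itself; it explicitly defers to \cite{ps3_groselj_17}. So your attempt is measured against the standard argument that such a proof must contain, not against a proof printed in this paper. Your linear-algebra skeleton is fine and is indeed the standard one: well-definedness of the functionals in $\splspacedb$, reduction of both claims (existence of the dual basis $\splspaceb$ and the representation formula) to bijectivity of the evaluation map $\Lambda$, and reduction of bijectivity to injectivity via the dimension count $|\splspacedb| = 3|V| + |\PSR{T}| + |\PSR{E}^b| = \dim \splspace{3}{1}{\PST}$. The vertex step is also correct: vanishing of $\beta_{i,1}^v(S), \beta_{i,2}^v(S), \beta_{i,3}^v(S)$ kills the affine map $x \mapsto \blossr{S}{t_{i,j,k}}(v_i,v_i,x)$ at three affinely independent points, hence kills the first-order jet of $S$ at $v_i$.

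The gap is in the injectivity step, and it is not only that you leave the ``main obstacle'' (nonsingularity of the local system) unverified --- it is that the strictly local, macro-triangle-by-macro-triangle formulation you propose is false for interior triangles, so the system you would assemble is singular. Count: on a single macro-triangle $t_{ijk}$, the space of $\C{1}$ cubics on its 6-split has dimension $3\cdot 3 + 4\cdot 3 = 21$, while the functionals ``associated'' with an interior $t_{ijk}$ (no edges in $E^b$) number only $9 + 6 = 15$: the vertex functionals and the six triangle functionals $\beta_{\cdot,\cdot,\cdot}^t$. The restriction map from global splines to this local space is surjective (any local $\C{1}$ piece extends to a global $\C{1}$ spline), so these $15$ functionals have a kernel of dimension at least $6$ on the local space, and no amount of bookkeeping with the $\C{1}$ identities \eqref{eq:c1_smooth_blossom} across the six interior micro-edges $[v_i,v_{ijk}],\dots,[v_{ki},v_{ijk}]$ can make the system uniquely solvable. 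The missing six conditions live on the macro-edges of $t_{ijk}$ and must be imported from outside: across an interior edge $e_{ij} \in E \setminus E^b$ one needs the neighboring triangle's functionals $\beta_{i,j,k'}^t, \beta_{j,i,k'}^t$ together with the $\C{1}$ continuity of $S$ across $e_{ij}$ to pin down the first-order data of $S$ along $e_{ij}$ (on boundary edges this role is played by $\beta_{i,j}^e, \beta_{j,i}^e$, which is why your count closes only there). The correct organization of the injectivity proof is therefore edge-first: show that the vanishing of the vertex functionals at $v_i, v_j$ and of the four functionals attached to the micro-triangles on both sides of $e_{ij}$ (or two $\beta^t$ plus two $\beta^e$ for a boundary edge) forces the value and the full gradient of $S$ to vanish along $e_{ij}$; only then restrict to each macro-triangle, where $S$ has vanishing first-order jet on the whole boundary, and use the six $\beta^t$ values to conclude $S|_{t_{ijk}} \equiv 0$. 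As written, your plan would stall exactly at the step you flagged, and not merely for computational reasons.
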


Theorem~\ref{th:c1_basis} also implies that the functions from $\splspaceb$ form a partition of unity. Moreover, these functions are locally supported and nonnegative.
For more details regarding the properties of $\splspaceb$, we refer the reader to \cite{ps3_groselj_17}.

\begin{remark}
Actually, a more general construction of basis functions was considered in \cite{ps3_groselj_17}. The functionals in \eqref{eq:beta_t}--\eqref{eq:beta_e} were defined as follows: for each triangle $t_{i,j,k} \in \PSR{T}$ and some parameter $\sigma_{i,j,k}>0$,
\begin{equation*}
\beta_{i,j,k}^t(S) = \blossr{S}{t_{i,j,k}}(v_i, v_i+2\sigma_{i,j,k}(v_j-v_i), v_{ijk}),
\end{equation*}
and for each boundary edge $e_{i,j} \in \PSR{E}^b$ and some parameter $\sigma_{i,j}>0$,
\begin{equation*}
\beta_{i,j}^e(S) = \blossr{S}{e_{i,j}}(v_i, v_i+2\sigma_{i,j}(v_j-v_i), v_{ij}).
\end{equation*}
For simplicity, in this paper we assume all $\sigma_{i,j,k} = \sigma_{i,j} = \frac{1}{2}$.
\end{remark}

\section{$\C{2}$ Super-Smoothness}
\label{sec:c2_super_smooth}

Given a triangle $t_{ijk} \in T$ of $\tri$, we seek the conditions for a spline $S \in \splspace{3}{1}{\PST}$ to be $C^2$ in the interior of $t_{ijk}$. It suffices to derive the conditions for the $\C{2}$ smoothness of $S$ across the edges $[v_{ij}, v_{ijk}]$ and $[v_i, v_{ijk}]$ of $\PST$.

\subsection{$\C{2}$ Smoothness Across $[v_{ij}, v_{ijk}]$}

Let us consider the triangles $t_{i,j,k}, t_{j,i,k} \in \PSR{T}$ of $\PST$ attached to the edge $[v_{ij}, v_{ijk}] \in \PSR{E}$. By Lemma~\ref{lem:smoothness_edge}, the restriction of $S$ to $t_{i,j,k} \cup t_{j,i,k}$ is $\C{2}$ smooth across $[v_{ij}, v_{ijk}]$ if and only if%
\begin{subequations}%
\label{eq:c2_smooth_blossom_et}%
\begin{align}
\blossr{S}{t_{i,j,k}}(v_{ijk}, v_i, v_j) &= \blossr{S}{t_{j,i,k}}(v_{ijk}, v_i, v_j), \label{eq:c2_smooth_blossom_et_t} \\
\blossr{S}{t_{i,j,k}}(v_{ij}, v_i, v_j) &= \blossr{S}{t_{j,i,k}}(v_{ij}, v_i, v_j). \label{eq:c2_smooth_blossom_et_e}
\end{align}
\end{subequations}
More precisely, we obtain \eqref{eq:c2_smooth_blossom_et} from the equalities in \eqref{eq:c2_smooth_blossom} by choosing $P_1 = S|_{t_{i,j,k}}$, $P_2 = S|_{t_{j,i,k}}$, $p_1 = v_{ijk}$, $p_2 = v_{ij}$, and $q_1 = v_i$, $q_2 = v_j$. As the following proposition reveals, these two conditions can be expressed in terms of the dual functionals associated with certain triangles and boundary edges of $\PST$.

\begin{proposition}
\label{pro:c2_smoothness_et}
Let $e_{ij} \in E$ be an edge of $\tri$ and $t_{ijk} \in T$ a triangle of $\tri$ attached to $e_{ij}$. Suppose that $t_{ijk'} \in T$ is the second triangle of $\tri$ attached to $e_{ij}$ if $e_{ij} \in E \setminus E^b$ is an interior edge of $\tri$. Then, the restriction of a spline $S \in \splspace{3}{1}{\PST}$ to $t_{i,j,k} \cup t_{j,i,k}$ is $\C{2}$ smooth across $[v_{ij}, v_{ijk}]$ if and only if%
\begin{subequations}%
\label{eq:c2_smoothness_et}
\begin{equation}
\label{eq:c2_smoothness_et_t}
\beta_{i,j,k}^t(S) = \beta_{j,i,k}^t(S)
\end{equation}
and
\begin{equation}
\label{eq:c2_smoothness_et_e}
\left\{
\begin{aligned}
\beta_{i,j,k'}^t(S) &= \beta_{j,i,k'}^t(S) && \text{if} \ e_{ij} \in E \setminus E^b, \\
\beta_{i,j}^e(S) &= \beta_{j,i}^e(S) && \text{if} \ e_{ij} \in E^b.
\end{aligned}
\right.
\end{equation}
\end{subequations}
\end{proposition}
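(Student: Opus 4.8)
The plan is to express all four blossom quantities occurring in \eqref{eq:c2_smooth_blossom_et} and \eqref{eq:c2_smoothness_et} as values of a single affine function, and then exploit a collinearity built into the Powell--Sabin refinement. Concretely, I would introduce
\[
f(x) = \blossr{S}{t_{i,j,k}}(v_i, v_j, x) - \blossr{S}{t_{j,i,k}}(v_i, v_j, x), \qquad x \in \RR^2,
\]
which is affine in $x$ since the blossom is affine in each argument. Because the blossom is symmetric, condition \eqref{eq:c2_smooth_blossom_et_t} is exactly $f(v_{ijk}) = 0$ and condition \eqref{eq:c2_smooth_blossom_et_e} is exactly $f(v_{ij}) = 0$. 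The same symmetry, applied to \eqref{eq:beta_t}, gives $\beta_{i,j,k}^t(S) - \beta_{j,i,k}^t(S) = f(v_{ijk})$, so \eqref{eq:c2_smoothness_et_t} coincides with \eqref{eq:c2_smooth_blossom_et_t} with no further work.

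Next I would identify the remaining functionals in \eqref{eq:c2_smoothness_et_e} with values of $f$. In the boundary case, the compatibility of the univariate edge blossom with the bivariate blossom on collinear points yields $\beta_{i,j}^e(S) = \blossr{S}{t_{i,j,k}}(v_i, v_j, v_{ij})$ and $\beta_{j,i}^e(S) = \blossr{S}{t_{j,i,k}}(v_i, v_j, v_{ij})$, since $v_i, v_j, v_{ij}$ all lie on the line carrying $e_{ij}$; hence $\beta_{i,j}^e(S) - \beta_{j,i}^e(S) = f(v_{ij})$. In the interior case I would instead use the $\C{1}$ smoothness of $S$ across the edges $e_{i,j}$ and $e_{j,i}$, which lie on that same line: by the blossom form \eqref{eq:c1_smooth_blossom} of $\C{1}$ smoothness, a blossom evaluated with the two collinear arguments $v_i, v_j$ does not depend on the side from which the polynomial is taken, so $\beta_{i,j,k'}^t(S) = \blossr{S}{t_{i,j,k}}(v_i, v_j, v_{ijk'})$ and $\beta_{j,i,k'}^t(S) = \blossr{S}{t_{j,i,k}}(v_i, v_j, v_{ijk'})$, giving $\beta_{i,j,k'}^t(S) - \beta_{j,i,k'}^t(S) = f(v_{ijk'})$.

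With these reductions, the boundary case closes immediately: \eqref{eq:c2_smooth_blossom_et} reads $f(v_{ijk}) = f(v_{ij}) = 0$, which is precisely \eqref{eq:c2_smoothness_et_t} together with the boundary branch of \eqref{eq:c2_smoothness_et_e}. The interior case carries the one nontrivial idea. There \eqref{eq:c2_smooth_blossom_et} is $f(v_{ijk}) = f(v_{ij}) = 0$, whereas the claimed conditions are $f(v_{ijk}) = f(v_{ijk'}) = 0$, so the two pairs are \emph{not} matched one-to-one. The point is that the three evaluation points $v_{ijk}$, $v_{ij}$, $v_{ijk'}$ are distinct and collinear: by the defining property of the Powell--Sabin refinement, $v_{ij}$ is the intersection of $e_{ij}$ with the segment $[v_{ijk}, v_{ijk'}]$, so all three lie on one line. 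Since $f$ is affine, its restriction to that line is affine, and vanishing at any two of the three points forces vanishing at the third. Hence both $\{f(v_{ijk})=0,\ f(v_{ij})=0\}$ and $\{f(v_{ijk})=0,\ f(v_{ijk'})=0\}$ are equivalent to the vanishing of $f$ on the whole line, and therefore to each other, which is the desired equivalence of \eqref{eq:c2_smooth_blossom_et} with \eqref{eq:c2_smoothness_et}.

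The main obstacle is conceptual rather than computational: recognizing that the two $\C{2}$ conditions and the two dual-functional conditions each pin down the same affine function $f$ along the line through $v_{ijk}$, $v_{ij}$, $v_{ijk'}$, and that it is exactly the Powell--Sabin collinearity of these points that makes the two pairs interchangeable. I would only need to take minor care that $v_{ijk} \ne v_{ij} \ne v_{ijk'}$, so that two vanishing values genuinely determine $f$ on the line, and to state the edge-blossom restriction identity used in the boundary case as the standard compatibility of the blossom with restriction to a line.
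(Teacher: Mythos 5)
Your proof is correct and takes essentially the same route as the paper's: both reduce \eqref{eq:c2_smooth_blossom_et_t} to \eqref{eq:c2_smoothness_et_t} by blossom symmetry, handle the boundary case by identifying the edge functionals with the triangle blossoms at collinear arguments, and settle the interior case by combining the affinity of the blossom, the $\C{1}$ smoothness of $S$ across $e_{i,j}$ and $e_{j,i}$, and the Powell--Sabin collinearity of $v_{ijk}$, $v_{ij}$, $v_{ijk'}$. Your affine function $f$ and the ``two distinct zeros on a line force a third'' argument is merely a repackaging of the paper's expansion $v_{ij} = (1-\mu) v_{ijk} + \mu v_{ijk'}$ with $\mu \in (0,1)$, followed by cancellation under \eqref{eq:c2_smoothness_et_t}.
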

\begin{proof}
The condition \eqref{eq:c2_smooth_blossom_et_t} is equivalent to \eqref{eq:c2_smoothness_et_t} by the definitions of $\beta_{i,j,k}^t(S)$ and $\beta_{j,i,k}^t(S)$ in \eqref{eq:beta_t}. If $e_{ij} \in E^b$, the condition \eqref{eq:c2_smooth_blossom_et_e} is equivalent to \eqref{eq:c2_smoothness_et_e} by the definitions of $\beta_{i,j}^e(S)$ and $\beta_{j,i}^e(S)$ in \eqref{eq:beta_e}. Otherwise, if $e_{ij} \in E \setminus E^b$, there exists a value $\mu \in (0,1)$ such that
\begin{equation*}
v_{ij} = (1-\mu) v_{ijk} + \mu v_{ijk'}.
\end{equation*}
By the affinity of the blossom in the first argument and the $\C{1}$ smoothness of $S$ across $e_{i,j}$ and $e_{j,i}$,
\begin{align*}
\blossr{S}{t_{i,j,k}}(v_{ij}, v_i, v_j) = (1-\mu) \beta_{i,j,k}^t(S) + \mu \beta_{i,j,k'}^t(S), \\
\blossr{S}{t_{j,i,k}}(v_{ij}, v_i, v_j) = (1-\mu) \beta_{j,i,k}^t(S) + \mu \beta_{j,i,k'}^t(S).
\end{align*}
Assuming \eqref{eq:c2_smoothness_et_t}, \eqref{eq:c2_smooth_blossom_et_e} is then equivalent to \eqref{eq:c2_smoothness_et_e}.
\end{proof}

\begin{remark}
\label{rem:c2_smoothness_et}
The result presented in Proposition~\ref{pro:c2_smoothness_et} was already observed in \cite{ps3_groselj_17}. In addition, it was argued there that if for a spline $S \in \splspace{3}{1}{\PST}$ the conditions \eqref{eq:c2_smoothness_et} are satisfied for the edge $[v_{ij}, v_{ijk}]$ and analogous conditions are satisfied for the edges $[v_{jk}, v_{ijk}]$ and $[v_{ki}, v_{ijk}]$, then $S$ is also $\C{2}$ smooth at the split point $v_{ijk}$.
\end{remark}

\subsection{$\C{2}$ Smoothness Across $[v_i, v_{ijk}]$}

We now consider the triangles $t_{i,j,k}, t_{i,k,j} \in \PSR{T}$ of $\PST$ attached to the edge $[v_i, v_{ijk}] \in \PSR{E}$. Lemma~\ref{lem:smoothness_edge} implies that the restriction of a spline $S \in \splspace{3}{1}{\PST}$ to $t_{i,j,k} \cup t_{i,k,j}$ is $\C{2}$ smooth across $[v_i, v_{ijk}]$ if and only if%
\begin{subequations}%
\label{eq:c2_smooth_blossom_vt}
\begin{align}
\blossr{S}{t_{i,j,k}}(v_i, v_j, v_k) &= \blossr{S}{t_{i,k,j}}(v_i, v_j, v_k), \label{eq:c2_smooth_blossom_vt_v} \\
\blossr{S}{t_{i,j,k}}(v_{ijk}, v_j, v_k) &= \blossr{S}{t_{i,k,j}}(v_{ijk}, v_j, v_k). \label{eq:c2_smooth_blossom_vt_t}
\end{align}
\end{subequations}
More precisely, we obtain \eqref{eq:c2_smooth_blossom_vt} from the equalities in \eqref{eq:c2_smooth_blossom} by choosing $P_1 = S|_{t_{i,j,k}}$, $P_2 = S|_{t_{i,k,j}}$, $p_1 = v_i$, $p_2 = v_{ijk}$, and $q_1 = v_j$, $q_2 = v_k$.

In general, the blossom values in \eqref{eq:c2_smooth_blossom_vt_v} depend on the values of $\beta_{i,r}^v(S)$, $r = 1, 2, 3$. Consequently, they are not straightforward to satisfy when one considers the $\C{2}$ smoothness over several triangles of $\tri$ with a vertex at $v_i$. However, as the next observations show, they can be greatly simplified under certain geometric assumptions. 

\begin{lemma}
\label{lem:c2_smooth_vt_bloss_v}
Let $v_i \in V$ be a vertex of $\tri$ and $t_{ijk} \in T$ a triangle of $\tri$ with a vertex at $v_i$. Suppose that $t_{ijk'} \in T$ is the second triangle of $\tri$ attached to $e_{ij}$ if $e_{ij} \in E \setminus E^b$ is an interior edge of $\tri$. Moreover, suppose that the points $v_k$, $v_{ijk}$, $v_{ij}$ are collinear and $\nu_{ij,k} \in \RR$ is such that
\begin{equation}
\label{eq:nu}
v_k =
\begin{cases}
(1 - \nu_{ij,k}) v_{ijk} + \nu_{ij,k} v_{ijk'} & \text{if} \ e_{ij} \in E \setminus E^b, \\
(1 - \nu_{ij,k}) v_{ijk} + \nu_{ij,k} v_{ij} & \text{if} \ e_{ij} \in E^b.
\end{cases}
\end{equation}
Then,
\begin{equation*}
\blossr{S}{t_{i,j,k}}(v_i, v_j, v_k) = (1 - \nu_{ij,k}) \beta_{i,j,k}^t(S) + \nu_{ij,k}
\begin{cases}
\beta_{i,j,k'}^t(S) & \text{if} \ e_{ij} \in E \setminus E^b, \\
\beta_{i,j}^e(S) & \text{if} \ e_{ij} \in E^b,
\end{cases}
\end{equation*}
for every $S \in \splspace{3}{1}{\PST}$.
\end{lemma}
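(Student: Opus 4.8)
The plan is to evaluate $\blossr{S}{t_{i,j,k}}(v_i, v_j, v_k)$ by inserting the hypothesized affine representation \eqref{eq:nu} of $v_k$ into the third argument and exploiting the affinity of the blossom in that argument. Expanding the resulting affine combination gives
\[
\blossr{S}{t_{i,j,k}}(v_i, v_j, v_k) = (1-\nu_{ij,k}) \blossr{S}{t_{i,j,k}}(v_i, v_j, v_{ijk}) + \nu_{ij,k} \, w,
\]
where $w = \blossr{S}{t_{i,j,k}}(v_i, v_j, v_{ijk'})$ in the interior case and $w = \blossr{S}{t_{i,j,k}}(v_i, v_j, v_{ij})$ in the boundary case. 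The first summand equals $(1-\nu_{ij,k})\beta_{i,j,k}^t(S)$ directly by the definition \eqref{eq:beta_t}, so the whole argument reduces to identifying $w$ with the appropriate functional in each of the two cases.

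For the interior case I would observe that the two small triangles $t_{i,j,k}$ and $t_{i,j,k'}$ share the common edge $e_{i,j} = [v_i, v_{ij}]$, and that both $v_i$ and $v_j$ lie on the line carrying this edge: since $v_{ij}$ is the split point on $e_{ij} = [v_i, v_j]$, the points $v_i$, $v_{ij}$, $v_j$ are collinear. Hence at least two of the three arguments $v_i, v_j, v_{ijk'}$ lie on the line containing $e_{i,j}$, while $v_{ijk'}$ does not. The $\C{1}$ smoothness of $S \in \splspace{3}{1}{\PST}$ across $e_{i,j}$, in the form \eqref{eq:smooth_blossom} with $r=1$ from the proof of Lemma~\ref{lem:smoothness_edge}, then lets me replace the blossom on $t_{i,j,k}$ by the blossom on $t_{i,j,k'}$, yielding $w = \blossr{S}{t_{i,j,k'}}(v_i, v_j, v_{ijk'}) = \beta_{i,j,k'}^t(S)$.

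For the boundary case the three arguments $v_i, v_j, v_{ij}$ are all collinear on the line carrying $e_{ij}$, so $w$ is a value of the bivariate blossom of $S|_{t_{i,j,k}}$ sampled entirely on a single line. The remaining step is the standard blossoming fact that such a value coincides with the univariate blossom of the restriction of the polynomial to that line; together with the $\C{0}$ continuity of $S$ (so that $S|_{t_{i,j,k}}$ restricted to $e_{i,j}$ equals $S|_{e_{i,j}}$), this gives $w = \blossr{S}{e_{i,j}}(v_i, v_j, v_{ij}) = \beta_{i,j}^e(S)$. Substituting the identified $w$ into the displayed expansion then produces the claimed formula in both cases.

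The hard part will be the boundary case, where I must justify the reduction of the bivariate blossom sampled at collinear points to the univariate edge blossom defining $\beta_{i,j}^e$; this is the only genuinely nontrivial ingredient, everything else being an application of affinity and of the already established $\C{1}$ transfer. I also need to record the geometric verification that $v_i, v_{ij}, v_j$ are collinear, which is what legitimizes invoking the $\C{1}$ condition across $e_{i,j}$ in the interior case. Both points are geometric rather than computational, so no lengthy calculation is anticipated.
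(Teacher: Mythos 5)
Your proof is correct and follows essentially the same route as the paper: expand the third blossom argument via \eqref{eq:nu} using multi-affinity, identify the first term with $\beta_{i,j,k}^t(S)$, use $\C{1}$ smoothness across $e_{i,j}$ to transfer the blossom to $t_{i,j,k'}$ in the interior case, and identify the collinear blossom value with $\beta_{i,j}^e(S)$ in the boundary case. You actually spell out two justifications the paper leaves implicit (why the $\C{1}$ transfer applies, and the reduction of the bivariate blossom at collinear points to the univariate edge blossom), but the argument is the same.
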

\begin{proof}
If $e_{ij} \in E \setminus E^b$, the affinity of the blossom in the third argument and the $\C{1}$ smoothness of $S$ across $e_{i,j}$ imply that $\blossr{S}{t_{i,j,k}}(v_i, v_j, v_k)$ can be decomposed into
\begin{equation*}
(1 - \nu_{ij,k}) \blossr{S}{t_{i,j,k}}(v_i, v_j, v_{ijk}) + \nu_{ij,k} \blossr{S}{t_{i,j,k'}}(v_i, v_j, v_{ijk'}).
\end{equation*}
Otherwise, if $e_{ij} \in E^b$, the affinity of the blossom in the third argument implies that $\blossr{S}{t_{i,j,k}}(v_i, v_j, v_k)$ can be decomposed into
\begin{equation*}
(1 - \nu_{ij,k}) \blossr{S}{t_{i,j,k}}(v_i, v_j, v_{ijk}) + \nu_{ij,k} \blossr{S}{t_{i,j,k}}(v_i, v_j, v_{ij}).
\end{equation*}
This confirms the statement by the definitions of $\beta_{i,j,k}^t(S)$ and $\beta_{i,j,k'}^t(S)$ in \eqref{eq:beta_t} and $\beta_{i,j}^e(S)$ in \eqref{eq:beta_e}.
\end{proof}

\begin{lemma}
\label{lem:c2_smooth_vt_bloss_t}
Let $v_i \in V$ be a vertex of $\tri$ and $t_{ijk} \in T$ a triangle of $\tri$ with a vertex at $v_i$. Suppose that the points $v_k$, $v_{ijk}$, $v_{ij}$ are collinear. Then,
\begin{equation*}
\blossr{S}{t_{i,j,k}}(v_{ijk}, v_j, v_k) = \beta_{j,k,i}^t(S),
\end{equation*}
for every $S \in \splspace{3}{1}{\PST}$.
\end{lemma}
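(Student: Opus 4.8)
The plan is to transport the blossom value across the Powell--Sabin refinement from the triangle $t_{i,j,k}$ to the triangle $t_{j,k,i}$ by invoking the $\C{1}$ smoothness of $S$ across two internal edges, and then to identify the result with $\beta_{j,k,i}^t(S)$ using the symmetry of the blossom. Geometrically, $t_{i,j,k}=[v_i,v_{ij},v_{ijk}]$ and $t_{j,i,k}=[v_j,v_{ij},v_{ijk}]$ share the edge $[v_{ij},v_{ijk}]$, while $t_{j,i,k}$ and $t_{j,k,i}=[v_j,v_{jk},v_{ijk}]$ share the edge $[v_j,v_{ijk}]$; thus the route $t_{i,j,k}\to t_{j,i,k}\to t_{j,k,i}$ crosses exactly these two edges, and the blossom argument $(v_{ijk},v_j,v_k)$ stays fixed along the way.

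For the first hop I would use the collinearity hypothesis. Its role is precisely to force $v_k$ onto the line containing $[v_{ij},v_{ijk}]$, so that in the triple $(v_{ijk},v_j,v_k)$ two arguments, namely $v_{ijk}$ and $v_k$, lie on that line. Since $S$ is $\C{1}$ across $[v_{ij},v_{ijk}]$, the blossom characterization of smoothness (Lemma~\ref{lem:smoothness_edge}; see \eqref{eq:smooth_blossom} with $r=1$, which asserts equality of the two blossoms whenever at least two arguments lie on the edge line) gives
\[
\blossr{S}{t_{i,j,k}}(v_{ijk},v_j,v_k)=\blossr{S}{t_{j,i,k}}(v_{ijk},v_j,v_k).
\]

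The second hop needs no extra assumption: the points $v_{ijk}$ and $v_j$ automatically lie on the line containing $[v_j,v_{ijk}]$, so again two of the three arguments of $(v_{ijk},v_j,v_k)$ sit on the relevant edge line, and the $\C{1}$ smoothness of $S$ across $[v_j,v_{ijk}]$ yields
\[
\blossr{S}{t_{j,i,k}}(v_{ijk},v_j,v_k)=\blossr{S}{t_{j,k,i}}(v_{ijk},v_j,v_k).
\]
By symmetry of the blossom the right-hand side equals $\blossr{S}{t_{j,k,i}}(v_j,v_k,v_{ijk})$, which is exactly $\beta_{j,k,i}^t(S)$ by the definition \eqref{eq:beta_t} (the split point of the triangle $t_{jki}=t_{ijk}$ being $v_{ijk}$). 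Chaining the two displayed equalities then proves the claim.

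I expect the only genuinely substantive point to be the observation in the first hop: recognizing that the collinearity of $v_k$, $v_{ijk}$, $v_{ij}$ is exactly what places $v_k$ on the shared edge line, thereby supplying the second on-line argument required to invoke $\C{1}$ smoothness. Once this is seen, both transports are immediate, and the remaining steps --- blossom symmetry and unfolding the definition of $\beta_{j,k,i}^t$ --- are routine.
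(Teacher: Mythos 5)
Your proof is correct and follows essentially the same route as the paper's: transporting the blossom value $\blossr{S}{t_{i,j,k}}(v_{ijk},v_j,v_k)$ first across $[v_{ij},v_{ijk}]$ (where the collinearity hypothesis places $v_k$ on the edge line, supplying the second on-line argument needed for the $\C{1}$ condition) and then across $[v_j,v_{ijk}]$, before identifying the result with $\beta_{j,k,i}^t(S)$ via \eqref{eq:beta_t}. Your write-up merely makes explicit the counting of on-line arguments in \eqref{eq:smooth_blossom} that the paper leaves implicit.
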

\begin{proof}
Under the assumption of collinearity, it follows from the $\C{1}$ smoothness of $S$ across $[v_{ij}, v_{ijk}]$ that
\begin{equation*}
\blossr{S}{t_{i,j,k}}(v_{ijk}, v_j, v_k) = \blossr{S}{t_{j,i,k}}(v_{ijk}, v_j, v_k).
\end{equation*}
Then, by the $\C{1}$ smoothness of $S$ across $[v_j, v_{ijk}]$,
\begin{equation*}
\blossr{S}{t_{j,i,k}}(v_{ijk}, v_j, v_k) = \blossr{S}{t_{j,k,i}}(v_{ijk}, v_j, v_k).
\end{equation*}
The latter is equal to $\beta_{j,k,i}^t(S)$; see \eqref{eq:beta_t}.
\end{proof}

\begin{proposition}
\label{pro:c2_smoothness_vt}
Let $v_i \in V$ be a vertex of $\tri$ and $t_{ijk} \in T$ a triangle of $\tri$. Suppose that
\begin{itemize}
\item the points $v_k$, $v_{ijk}$, $v_{ij}$ are collinear and $t_{ijk'} \in T$ is the second triangle of $\tri$ attached to $e_{ij}$ if $e_{ij} \in E \setminus E^b$ is an interior edge;
\item the points $v_j$, $v_{ijk}$, $v_{ki}$ are collinear and $t_{kij'} \in T$ is the second triangle of $\tri$ attached to $e_{ki}$ if $e_{ki} \in E \setminus E^b$ is an interior edge. 
\end{itemize}
Then, the restriction of a spline $S \in \splspace{3}{1}{\PST}$ to $t_{i,j,k} \cup t_{i,k,j}$ is $\C{2}$ smooth across the edge $[v_i, v_{ijk}]$ if and only if
\begin{align*}
& (1 - \nu_{ij,k}) \beta_{i,j,k}^t(S) + \nu_{ij,k}
\begin{cases}
\beta_{i,j,k'}^t(S) & \text{if} \ e_{ij} \in E \setminus E^b \\
\beta_{i,j}^e(S) & \text{if} \ e_{ij} \in E^b
\end{cases} \\
&= (1 - \nu_{ki,j}) \beta_{i,k,j}^t(S) + \nu_{ki,j}
 \begin{cases}
\beta_{i,k,j'}^t(S) & \text{if} \ e_{ki} \in E \setminus E^b, \\
\beta_{i,k}^e(S) & \text{if} \ e_{ki} \in E^b,
\end{cases}
\end{align*}
and
\begin{equation*}
\beta_{j,k,i}^t(S) = \beta_{k,j,i}^t(S).
\end{equation*}
\end{proposition}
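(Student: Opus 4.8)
The plan is to start from the characterization already recorded before the statement: by Lemma~\ref{lem:smoothness_edge} applied to the pair $t_{i,j,k}, t_{i,k,j}$, the restriction of $S$ to $t_{i,j,k} \cup t_{i,k,j}$ is $\C{2}$ smooth across $[v_i, v_{ijk}]$ if and only if the two blossom equalities \eqref{eq:c2_smooth_blossom_vt_v} and \eqref{eq:c2_smooth_blossom_vt_t} hold. I would then translate each of these two equalities into the corresponding dual-functional identity using the two preparatory lemmas, treating the two sides of each equality separately.

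For \eqref{eq:c2_smooth_blossom_vt_v}, I would apply Lemma~\ref{lem:c2_smooth_vt_bloss_v} verbatim to the left-hand side $\blossr{S}{t_{i,j,k}}(v_i, v_j, v_k)$, which is licensed by the first collinearity hypothesis (that $v_k$, $v_{ijk}$, $v_{ij}$ are collinear); this expresses it as $(1-\nu_{ij,k})\beta_{i,j,k}^t(S)$ plus the $\nu_{ij,k}$-term with the branch dictated by whether $e_{ij}$ is interior or boundary. For the right-hand side $\blossr{S}{t_{i,k,j}}(v_i, v_j, v_k)$, I would first use the symmetry of the blossom to rewrite it as $\blossr{S}{t_{i,k,j}}(v_i, v_k, v_j)$ and then apply the same lemma with the roles of $j$ and $k$ interchanged, which is now licensed by the second collinearity hypothesis (that $v_j$, $v_{ijk}$, $v_{ki}$ are collinear). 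Equating the two expressions yields precisely the first stated identity.

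For \eqref{eq:c2_smooth_blossom_vt_t}, the argument is shorter: Lemma~\ref{lem:c2_smooth_vt_bloss_t} applied to the left-hand side gives $\blossr{S}{t_{i,j,k}}(v_{ijk}, v_j, v_k) = \beta_{j,k,i}^t(S)$, and the same lemma with $j$ and $k$ swapped (again after invoking blossom symmetry to match the arguments) gives $\blossr{S}{t_{i,k,j}}(v_{ijk}, v_j, v_k) = \beta_{k,j,i}^t(S)$; hence \eqref{eq:c2_smooth_blossom_vt_t} is equivalent to $\beta_{j,k,i}^t(S) = \beta_{k,j,i}^t(S)$. The step I expect to require the most care is the index bookkeeping in the symmetric applications of the lemmas to $t_{i,k,j}$: I must check that interchanging $j \leftrightarrow k$ carries the first collinearity hypothesis onto the second, converts $e_{ij}$ and $t_{ijk'}$ into $e_{ki}$ and $t_{kij'}$ (using the unordered convention for the relevant index blocks and the symmetry $\nu_{ij,k} \mapsto \nu_{ki,j}$), and sends each branch to the correct one in the stated identity. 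Once this relabeling is verified, the conclusion follows immediately by the affinity and symmetry of the blossom.
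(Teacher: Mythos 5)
Your proposal is correct and follows exactly the paper's route: the paper's proof simply states that the proposition follows from Lemmas~\ref{lem:c2_smooth_vt_bloss_v} and \ref{lem:c2_smooth_vt_bloss_t}, applied (implicitly) to both triangles via the characterization \eqref{eq:c2_smooth_blossom_vt}. Your careful handling of the $j \leftrightarrow k$ relabeling and blossom symmetry for the triangle $t_{i,k,j}$ is precisely the bookkeeping the paper leaves to the reader.
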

\begin{proof}
The statement follows from Lemmas~\ref{lem:c2_smooth_vt_bloss_v} and \ref{lem:c2_smooth_vt_bloss_t}.
\end{proof}

\section{Super-Smooth Basis Functions}
\label{sec:ss_basis}

Following the results in Section~\ref{sec:c2_super_smooth}, we now study the $\C{2}$ super-smoothness of the basis functions described in Theorem~\ref{th:c1_basis} and reduce them in a way that they become smoother in certain regions of the domain. An important aspect of these reductions is a possible symmetry of the Powell--Sabin refinement. We say that a triangle $t_{ijk} \in T$ is symmetrically refined if
\begin{itemize}
\item the points $v_k$, $v_{ijk}$, $v_{ij}$ are collinear,
\item the points $v_j$, $v_{ijk}$, $v_{ki}$ are collinear, and
\item the points $v_i$, $v_{ijk}$, $v_{jk}$ are collinear.
\end{itemize}
Let $\Tsym$ denote the subset of $T$ consisting of symmetrically refined triangles. An example of a symmetrically refined triangle is the triangle $t_{ijk}$ shown in Figure~\ref{fig:pstri}. Its neighbouring triangles $t_{ijk'}$ and $t_{kij'}$ are not in $\Tsym$.

\subsection{Basis Functions Associated with Vertices}
\label{sec:basis_vertices}

For $v_i \in V$ and $r \in \set{1,2,3}$, the basis function $B_{i,r}^v$ defined in Section~\ref{sec:c1_basis} is zero on every triangle of $\tri$ that has no vertex at $v_i$. To examine its super-smoothness, it suffices to focus on the triangles of $\tri$ with a vertex at $v_i$.

\begin{proposition}
\label{pro:c2_smoothness_bv}
Let $v_i \in V$ be a vertex of $\tri$ and $r \in \set{1,2,3}$. Suppose that $t_{ijk} \in T$ is a triangle of $\tri$ with a vertex at $v_i$. If $t_{ijk} \in \Tsym$, then the restriction of $B_{i,r}^v$ to $t_{ijk}$ is $\C{2}$ smooth. Otherwise, the restriction of $B_{i,r}^v$ to $t_{ijk}$ is $\C{2}$ smooth across $[v_{ij}, v_{ijk}]$, $[v_{jk}, v_{ijk}]$, $[v_{ki}, v_{ijk}]$ and at $v_{ijk}$.
\end{proposition}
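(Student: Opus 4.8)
The plan is to read off the $\C{2}$ conditions of the earlier propositions directly from the dual definition \eqref{eq:c1_basis_v} of $B_{i,r}^v$. The decisive observation is that $\beta_{i,r}^v$ is the \emph{only} functional in $\splspacedb$ that does not vanish on $B_{i,r}^v$; in particular, every triangle functional and every boundary-edge functional in $\splspacedb$ annihilates $B_{i,r}^v$. Since the $\C{2}$ smoothness conditions of Propositions~\ref{pro:c2_smoothness_et} and \ref{pro:c2_smoothness_vt} are expressed entirely through such functionals, they will all reduce to trivial identities.

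First I would dispose of the ``Otherwise'' clause, which needs no geometric hypothesis. Applying Proposition~\ref{pro:c2_smoothness_et} to the interior edge $[v_{ij}, v_{ijk}]$, both equalities in \eqref{eq:c2_smoothness_et} read $0 = 0$ for $S = B_{i,r}^v$, so $B_{i,r}^v$ is $\C{2}$ across $[v_{ij}, v_{ijk}]$; the same argument, with the indices cycled, covers $[v_{jk}, v_{ijk}]$ and $[v_{ki}, v_{ijk}]$. Remark~\ref{rem:c2_smoothness_et} then promotes $\C{2}$ smoothness across these three edges to $\C{2}$ smoothness at $v_{ijk}$, which is exactly the assertion of the general case.

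Next, assuming $t_{ijk} \in \Tsym$, I would establish $\C{2}$ smoothness across the three remaining internal edges $[v_i, v_{ijk}]$, $[v_j, v_{ijk}]$, $[v_k, v_{ijk}]$. The three collinearity requirements defining $\Tsym$ are precisely the hypotheses of Proposition~\ref{pro:c2_smoothness_vt}, and because these requirements are symmetric under permutations of $\set{i,j,k}$, the suitably relabelled proposition applies to each of the three edges in turn. For $S = B_{i,r}^v$ the $\nu$-weighted identity and the condition $\beta_{j,k,i}^t(S) = \beta_{k,j,i}^t(S)$ again become $0 = 0$. Collecting everything, $B_{i,r}^v$ is $\C{2}$ across all six internal edges of the $6$-split, which meet only at $v_{ijk}$, and $\C{2}$ at that point; hence it is $\C{2}$ on the whole of $t_{ijk}$.

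I do not anticipate a genuine difficulty: the statement is essentially a corollary of the results already in hand. The only place needing care is the bookkeeping in the symmetric case, namely checking that each of the three collinearity conditions of $\Tsym$ is matched to the correctly permuted instance of Proposition~\ref{pro:c2_smoothness_vt}, so that all three vertex edges are genuinely covered. I would write out these index permutations explicitly to make sure none of the three edges is missed.
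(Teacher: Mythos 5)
Your proposal is correct and follows exactly the paper's own argument: the paper's proof likewise invokes the duality property \eqref{eq:c1_basis_v} (so that all triangle and boundary-edge functionals annihilate $B_{i,r}^v$, turning the conditions of Propositions~\ref{pro:c2_smoothness_et} and~\ref{pro:c2_smoothness_vt} into trivial identities), together with Remark~\ref{rem:c2_smoothness_et} for the smoothness at $v_{ijk}$ in the non-symmetric case. Your write-up merely spells out the index bookkeeping that the paper leaves implicit.
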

\begin{proof}
For every $t_{ijk} \in T \setminus \Tsym$, the statement follows from \eqref{eq:c1_basis_v}, Proposition~\ref{pro:c2_smoothness_et}, and Remark~\ref{rem:c2_smoothness_et}.
For every $t_{ijk} \in \Tsym$, the statement follows from \eqref{eq:c1_basis_v} and Propositions~\ref{pro:c2_smoothness_et} and~\ref{pro:c2_smoothness_vt}.
\end{proof}

\subsection{Basis Functions Associated with Interior Edges}
\label{sec:basis_e_interior}

In the following, we present techniques for combining basis functions described in Section~\ref{sec:c1_basis}. They allow for a significant reduction of the number of basis functions.

Let $e_{ij} \in E \setminus E^b$ be an interior edge of $\tri$. Suppose that $t_{i,j,k}, t_{j,i,k}, t_{i,j,k'}, t_{j,i,k'} \in \PSR{T}$ are triangles of $\PST$ attached to $e_{ij}$ (see Figure~\ref{fig:pstri} for an illustration). To combine the basis functions $B_{i,j,k}^t$, $B_{j,i,k}^t$, $B_{i,j,k'}^t$, $B_{j,i,k'}^t$, we consider convex weights $\omega_{ij,k}, \omega_{ij,k'} \in [0,1]$ and points $w_{ij,k}, w_{ij,k'}$ such that%
\begin{subequations}%
\label{eq:w_points}
\begin{align}
v_{ijk} &= (1 - \omega_{ij,k}) w_{ij,k} + \omega_{ij,k} w_{ij,k'}, \\
v_{ijk'} &= (1 - \omega_{ij,k'}) w_{ij,k'} + \omega_{ij,k'} w_{ij,k}.
\end{align}
\end{subequations}
With $w_{ij,k}$ and $w_{ij,k'}$ we aim to extend the line segment $[v_{ijk}, v_{ijk'}]$. More precisely, we choose $w_{ij,k} = v_k$ if $t_{ijk} \in \Tsym$ and $w_{ij,k} = v_{ijk}$ otherwise. Similarly, $w_{ij,k'} = v_{k'}$ if $t_{ijk'} \in \Tsym$ and $w_{ij,k'} = v_{ijk'}$ otherwise.

\begin{lemma}
\label{lem:relations_omega_nu}
Let $e_{ij} \in E \setminus E^b$ be an interior edge of $\tri$ and suppose that $v_k$, $v_{ijk}$, $v_{ijk'}$ are collinear and $w_{ij,k} = v_k$. Then,
\begin{equation*}
(1 - \nu_{ij,k}) (1 - \omega_{ij,k}) + \nu_{ij,k} \omega_{ij,k'} = 1,
\end{equation*}
for the weights $\nu_{ij,k}$, $\omega_{ij,k}$, $\omega_{ij,k'}$ introduced in \eqref{eq:nu} and \eqref{eq:w_points}.
\end{lemma}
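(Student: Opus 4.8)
The plan is to substitute the expressions for $v_{ijk}$ and $v_{ijk'}$ provided by \eqref{eq:w_points} directly into the defining relation \eqref{eq:nu} for $\nu_{ij,k}$, and then to exploit the affine independence of the two points that survive the substitution. Abbreviate $\nu = \nu_{ij,k}$, $\omega = \omega_{ij,k}$, $\omega' = \omega_{ij,k'}$, and $w' = w_{ij,k'}$.

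Since $e_{ij}$ is interior and $w_{ij,k} = v_k$, the first case of \eqref{eq:nu} reads $v_k = (1-\nu) v_{ijk} + \nu v_{ijk'}$, while the two identities in \eqref{eq:w_points} become $v_{ijk} = (1-\omega) v_k + \omega w'$ and $v_{ijk'} = (1-\omega') w' + \omega' v_k$. First I would insert the latter two into the former and collect the coefficients of $v_k$ and of $w'$, which yields
\begin{equation*}
v_k = \bigl[(1-\nu)(1-\omega) + \nu\omega'\bigr] v_k + \bigl[(1-\nu)\omega + \nu(1-\omega')\bigr] w'.
\end{equation*}
A one-line check shows that the two bracketed coefficients sum to $1$, so the right-hand side is a genuine affine combination of $v_k$ and $w'$.

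Then I would invoke uniqueness of affine coordinates along the common line carrying all the relevant points, whose existence is guaranteed by the collinearity of $v_k$, $v_{ijk}$, $v_{ijk'}$ and the choice $w_{ij,k} = v_k$. Writing $A = (1-\nu)(1-\omega) + \nu\omega'$ and using that the second coefficient equals $1 - A$, the displayed identity rearranges to $(1-A)(v_k - w') = 0$. Since $w' = w_{ij,k'}$ equals either $v_{k'}$ or $v_{ijk'}$, both of which lie on the opposite side of the edge $e_{ij}$ from $v_k$, we have $v_k \neq w'$, and hence $A = 1$, which is exactly the asserted relation.

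The computation is short, so the only step requiring genuine care is the justification that $v_k \neq w_{ij,k'}$, since this distinctness is what lets me read off $A = 1$ from the affine combination; everything else is forced by the hypotheses. An alternative route would be to fix an affine coordinate on the common line, convert \eqref{eq:nu} and \eqref{eq:w_points} into scalar equations, solve for $\nu$, $\omega$, $\omega'$ as ratios of coordinate differences, and verify that the numerator of $(1-\nu)(1-\omega)+\nu\omega'$ factors so as to cancel the denominator. I expect the substitution argument above to be cleaner, as it sidesteps any worry about vanishing denominators.
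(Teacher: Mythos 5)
Your proof is correct and follows essentially the same route as the paper's: insert \eqref{eq:w_points} into the first case of \eqref{eq:nu} and compare the coefficients of $v_k$ and $w_{ij,k'}$. The only difference is that you make explicit the step the paper leaves implicit --- that comparing weights is legitimate because the coefficients sum to $1$ and $v_k \neq w_{ij,k'}$ (the latter lying strictly on the opposite side of the line through $e_{ij}$) --- which is a worthwhile clarification but not a different argument.
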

\begin{proof}
The statement can be verified by comparing the weights in the equation that is obtained by inserting \eqref{eq:w_points} into the expression \eqref{eq:nu} for $v_k$.
\end{proof}

Using the weights $\omega_{ij,k}$ and $\omega_{ij,k'}$, we define the basis functions%
\begin{subequations}%
\label{eq:basis_e_interior}
\begin{align}
B_{ij,k}^e &= (1 - \omega_{ij,k}) \bracket{B_{i,j,k}^t + B_{j,i,k}^t} + \omega_{ij,k'} \bracket{B_{i,j,k'}^t + B_{j,i,k'}^t}, \label{eq:basis_e_interior_t1} \\
B_{ij,k'}^e &= (1 - \omega_{ij,k'}) \bracket{B_{i,j,k'}^t + B_{j,i,k'}^t} + \omega_{ij,k} \bracket{B_{i,j,k}^t + B_{j,i,k}^t}; \label{eq:basis_e_interior_t2}
\end{align}
\end{subequations}
see Figure~\ref{fig:basis_e_interior} for an example.
Notice that when $t_{ijk}$ and $t_{ijk'}$ are not in $\Tsym$, $B_{ij,k}^e$ and $B_{ij,k'}^e$ are simply the sums $B_{i,j,k}^t + B_{j,i,k}^t$ and $B_{i,j,k'}^t + B_{j,i,k'}^t$, respectively.

The basis functions $B_{ij,k}^e$ and $B_{ij,k'}^e$ are zero on every triangle of $\tri$ different from $t_{ijk}$ and $t_{ijk'}$. It suffices to consider $C^2$ super-smoothness only on $t_{ijk}$ as $B_{ij,k}^e$ and $B_{ij,k'}^e$ behave analogously on $t_{ijk}$ and $t_{ijk'}$.

\begin{figure}[t!]
\centering
\includegraphics[width = 0.49\textwidth]{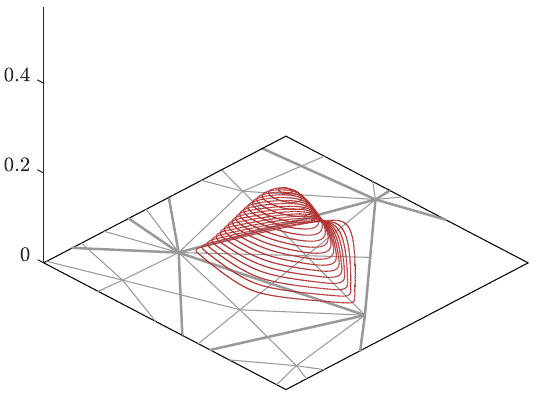}~
\includegraphics[width = 0.49\textwidth]{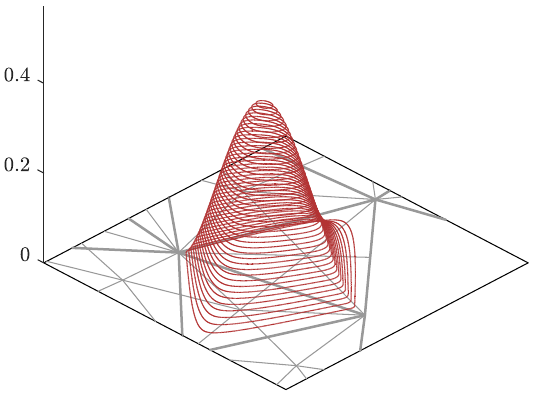}
\caption{The basis functions $B_{ij,k}^e$ (left) and $B_{ij,k'}^e$ (right) obtained by combining the basis functions $B_{i,j,k}^t$, $B_{j,i,k}^t$, $B_{i,j,k'}^t$, $B_{j,i,k'}^t$ shown in Figure~\ref{fig:c1_basis_t}.}
\label{fig:basis_e_interior}
\end{figure}

\begin{proposition}
\label{pro:c2_smoothness_be}
Let $e_{ij} \in E \setminus E^b$ be an interior edge of $\tri$. Suppose that $t_{ijk}, t_{ijk'} \in T$ are the triangles of $\tri$ attached to the edge $e_{ij}$. Then, the restrictions of $B_{ij,k}^e$ and $B_{ij,k'}^e$ to $t_{ijk}$ are $\C{2}$ smooth across $[v_{ij}, v_{ijk}]$, $[v_{jk}, v_{ijk}]$, $[v_{ki}, v_{ijk}]$ and at $v_{ijk}$. In addition, if $t_{ijk} \in \Tsym$, then the restriction of $B_{ij,k'}^e$ to $t_{ijk}$ is $\C{2}$ smooth.
\end{proposition}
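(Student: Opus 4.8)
The plan is to read off the action of every dual functional $\beta \in \splspacedb$ on the combined functions directly from their definitions in \eqref{eq:basis_e_interior} together with the duality relations \eqref{eq:c1_basis_t}, and then to feed these values into Propositions~\ref{pro:c2_smoothness_et} and~\ref{pro:c2_smoothness_vt}. Because $\beta_{a,b,c}^t$ equals $1$ on $B_{a,b,c}^t$ and $0$ on every other element of $\splspaceb$, the only nonvanishing values on $B_{ij,k'}^e$ are
\begin{equation*}
\beta_{i,j,k}^t(B_{ij,k'}^e) = \beta_{j,i,k}^t(B_{ij,k'}^e) = \omega_{ij,k}, \qquad \beta_{i,j,k'}^t(B_{ij,k'}^e) = \beta_{j,i,k'}^t(B_{ij,k'}^e) = 1 - \omega_{ij,k'},
\end{equation*}
and analogously $\beta_{i,j,k}^t(B_{ij,k}^e) = \beta_{j,i,k}^t(B_{ij,k}^e) = 1 - \omega_{ij,k}$ and $\beta_{i,j,k'}^t(B_{ij,k}^e) = \beta_{j,i,k'}^t(B_{ij,k}^e) = \omega_{ij,k'}$; every functional carrying an index triple different from these four, as well as every vertex and boundary-edge functional, vanishes on both functions.

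For the first assertion I would check, via Proposition~\ref{pro:c2_smoothness_et}, the $\C{2}$ smoothness across each of $[v_{ij}, v_{ijk}]$, $[v_{jk}, v_{ijk}]$, $[v_{ki}, v_{ijk}]$. Across $[v_{ij}, v_{ijk}]$ the conditions $\beta_{i,j,k}^t = \beta_{j,i,k}^t$ and $\beta_{i,j,k'}^t = \beta_{j,i,k'}^t$ hold because both functions weight $(i,j,k)$ and $(j,i,k)$ equally and weight $(i,j,k')$ and $(j,i,k')$ equally. Across $[v_{jk}, v_{ijk}]$ and $[v_{ki}, v_{ijk}]$ the relevant functionals ($\beta_{j,k,i}^t$, $\beta_{k,j,i}^t$, $\beta_{k,i,j}^t$, $\beta_{i,k,j}^t$, together with the corresponding neighbour or boundary-edge functionals) all carry triples absent from the four above, so both sides of the respective conditions vanish. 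Having $\C{2}$ smoothness across all three edges, Remark~\ref{rem:c2_smoothness_et} supplies $\C{2}$ smoothness at $v_{ijk}$, which finishes the first part for both functions.

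For the second assertion, assume $t_{ijk} \in \Tsym$. It then suffices to prove that $B_{ij,k'}^e$ is $\C{2}$ across the vertex edges $[v_i, v_{ijk}]$, $[v_j, v_{ijk}]$, $[v_k, v_{ijk}]$, since together with the first part this yields $\C{2}$ smoothness across all six interior edges of the split and at $v_{ijk}$, hence full $\C{2}$ smoothness on $t_{ijk}$. As $t_{ijk}$ is symmetrically refined, the collinearity hypotheses of Proposition~\ref{pro:c2_smoothness_vt} are met for each of these edges after the appropriate relabelling of $i,j,k$. For $[v_i, v_{ijk}]$ the second condition $\beta_{j,k,i}^t(B_{ij,k'}^e) = \beta_{k,j,i}^t(B_{ij,k'}^e)$ reads $0 = 0$, while the first condition, whose entire right-hand side is assembled from functionals vanishing on $B_{ij,k'}^e$, collapses to
\begin{equation*}
(1 - \nu_{ij,k}) \omega_{ij,k} + \nu_{ij,k} (1 - \omega_{ij,k'}) = \omega_{ij,k} + \nu_{ij,k} - \nu_{ij,k} \bracket{\omega_{ij,k} + \omega_{ij,k'}} = 0,
\end{equation*}
the last equality being exactly Lemma~\ref{lem:relations_omega_nu} (whose hypotheses hold because $t_{ijk} \in \Tsym$ forces $w_{ij,k} = v_k$ and the collinearity of $v_k, v_{ijk}, v_{ijk'}$).

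The edges $[v_j, v_{ijk}]$ and $[v_k, v_{ijk}]$ follow by the same bookkeeping under the transposition $i \leftrightarrow j$ and under the cyclic relabelling sending the distinguished vertex to $v_k$, respectively. The former mirrors $[v_i, v_{ijk}]$ because $B_{ij,k'}^e$ is invariant under $i \leftrightarrow j$; for the latter, after relabelling, both conditions of Proposition~\ref{pro:c2_smoothness_vt} hold trivially, each side being either $0$ or $\omega_{ij,k}$ on $B_{ij,k'}^e$. I expect the single genuinely nonroutine step to be the recognition that the first condition of Proposition~\ref{pro:c2_smoothness_vt} for $B_{ij,k'}^e$ is precisely the relation of Lemma~\ref{lem:relations_omega_nu}; the contrast with $B_{ij,k}^e$, whose analogous expression equals $1 \neq 0$, also explains why full super-smoothness is claimed only for $B_{ij,k'}^e$. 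The rest is careful tracking of which index triples survive under relabelling.
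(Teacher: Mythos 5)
Your proposal is correct and follows essentially the same route as the paper's proof: duality \eqref{eq:c1_basis_t} to read off the functional values, then Proposition~\ref{pro:c2_smoothness_et} with Remark~\ref{rem:c2_smoothness_et} for the smoothness across $[v_{ij}, v_{ijk}]$, $[v_{jk}, v_{ijk}]$, $[v_{ki}, v_{ijk}]$ and at $v_{ijk}$, and Proposition~\ref{pro:c2_smoothness_vt} combined with Lemma~\ref{lem:relations_omega_nu} for the edges $[v_i, v_{ijk}]$, $[v_j, v_{ijk}]$, $[v_k, v_{ijk}]$ when $t_{ijk} \in \Tsym$. You have simply made explicit the bookkeeping that the paper leaves implicit, including the key identification of the first condition of Proposition~\ref{pro:c2_smoothness_vt} with the relation of Lemma~\ref{lem:relations_omega_nu}, which is exactly how the paper's argument works.
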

\begin{proof}
The $\C{2}$ smoothness of $B_{ij,k}^e$ and $B_{ij,k'}^e$ across $[v_{ij}, v_{ijk}]$, $[v_{jk}, v_{ijk}]$, $[v_{ki}, v_{ijk}]$ and at $v_{ijk}$ follows from \eqref{eq:c1_basis_t}, Proposition~\ref{pro:c2_smoothness_et}, and Remark~\ref{rem:c2_smoothness_et}.
When $t_{ijk} \in \Tsym$, the $\C{2}$ smoothness of $B_{ij,k'}^e$ across $[v_i, v_{ijk}]$, $[v_j, v_{ijk}]$, $[v_k, v_{ijk}]$ follows from Proposition~\ref{pro:c2_smoothness_vt} by applying \eqref{eq:c1_basis_t} and Lemma~\ref{lem:relations_omega_nu}.
\end{proof}

Contrarily to $B_{ij,k'}^e$, the property $t_{ijk} \in \Tsym$ does not imply that the basis function $B_{ij,k}^e$ restricted to $t_{ijk}$ is $\C{2}$ smooth across $[v_i, v_{ijk}]$. Indeed, it follows from \eqref{eq:c1_basis_t} and Lemmas~\ref{lem:c2_smooth_vt_bloss_v} and \ref{lem:relations_omega_nu} that%
\begin{subequations}%
\label{eq:blossom_inter_be}
\begin{align}
\blossr{B_{ij,k}^e}{t_{i,j,k}}(v_i, v_j, v_k) &= 1, \label{eq:blossom_inter_be_t1} \\
\blossr{B_{ij,k}^e}{t_{i,k,j}}(v_i, v_j, v_k) &= 0, \label{eq:blossom_inter_be_t2}
\end{align}
\end{subequations}
which violates the $\C{2}$ smoothness condition given in \eqref{eq:c2_smooth_blossom_vt_v}.
This motivates a further recombination of the basis functions described in Section~\ref{sec:basis_t}.

\subsection{Basis Functions Associated with Boundary Edges}

Let $e_{ij} \in E^b$ be a boundary edge of $\tri$. Analogous to the definitions in Section~\ref{sec:basis_e_interior}, we now consider a convex weight $\omega_{ij,k} \in [0,1]$ and a point $w_{ij,k}$ such that
\begin{equation}
\label{eq:w_point_boundary}
v_{ijk} = (1 - \omega_{ij,k}) w_{ij,k} + \omega_{ij,k} v_{ij}.
\end{equation}
We choose $w_{ij,k} = v_k$ if $t_{ijk} \in \Tsym$ and $w_{ij,k} = v_{ijk}$ otherwise.

\begin{lemma}
\label{lem:relations_omega_nu_boundary}
Let $e_{ij} \in E^b$ be a boundary edge and suppose that $v_k$, $v_{ijk}$, $v_{ij}$ are collinear and $w_{ij,k} = v_k$. Then,
\begin{equation*}
(1 - \nu_{ij,k}) (1 - \omega_{ij,k}) = 1,
\end{equation*}
for the weights $\nu_{ij,k}$ and $\omega_{ij,k}$ introduced in \eqref{eq:nu} and \eqref{eq:w_point_boundary}.
\end{lemma}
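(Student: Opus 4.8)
The plan is to reproduce the short coefficient-bookkeeping argument from the proof of Lemma~\ref{lem:relations_omega_nu}, now using the boundary versions of the two defining relations. Under the hypothesis $w_{ij,k} = v_k$, relation \eqref{eq:w_point_boundary} reads $v_{ijk} = (1 - \omega_{ij,k}) v_k + \omega_{ij,k} v_{ij}$, and the boundary case of \eqref{eq:nu} reads $v_k = (1 - \nu_{ij,k}) v_{ijk} + \nu_{ij,k} v_{ij}$. Because $v_k$, $v_{ijk}$, $v_{ij}$ are collinear, both are genuine affine identities on a single line, and the conclusion can be read off purely from the affine coefficients.

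First I would insert the expression for $v_{ijk}$ from \eqref{eq:w_point_boundary} into the right-hand side of the boundary case of \eqref{eq:nu}, giving
\[
v_k = (1 - \nu_{ij,k}) \bracket{ (1 - \omega_{ij,k}) v_k + \omega_{ij,k} v_{ij} } + \nu_{ij,k} v_{ij}.
\]
Collecting like terms, the coefficient of $v_k$ on the right-hand side is $(1 - \nu_{ij,k})(1 - \omega_{ij,k})$, while the coefficient of $v_{ij}$ is $(1 - \nu_{ij,k}) \omega_{ij,k} + \nu_{ij,k}$; since the substitution preserves affine combinations, these two coefficients sum to $1$.

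To conclude, I would match coefficients on the two sides. Since $v_k$ is a vertex of $t_{ijk}$ while $v_{ij}$ is an interior point of the edge $e_{ij}$, the points $v_k$ and $v_{ij}$ are distinct and therefore form a nondegenerate affine frame on the line; hence the representation of $v_k$ as an affine combination of $v_k$ and $v_{ij}$ is unique, which forces the coefficient of $v_k$ to equal $1$. This yields $(1 - \nu_{ij,k})(1 - \omega_{ij,k}) = 1$, as required.

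The computation is entirely routine, so I anticipate no substantive obstacle. The only point deserving a word of justification is the legitimacy of matching affine coefficients, which rests on $v_k \neq v_{ij}$; this is immediate from the Powell--Sabin construction, in which $v_{ij}$ lies strictly inside $e_{ij}$ whereas $v_k$ does not lie on $e_{ij}$.
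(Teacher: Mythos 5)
Your proof is correct and takes essentially the same route as the paper's: the paper's own (one-line) proof likewise inserts \eqref{eq:w_point_boundary} with $w_{ij,k}=v_k$ into the boundary case of \eqref{eq:nu} and compares the affine weights of $v_k$ and $v_{ij}$. Your explicit justification that matching coefficients is legitimate because $v_k \neq v_{ij}$ (so these two points form an affine frame on the common line) is a detail the paper leaves implicit, but it does not change the argument.
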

\begin{proof}
The statement can be verified by comparing the weights in the equation that is obtained by inserting \eqref{eq:w_point_boundary} into the expression \eqref{eq:nu} for $v_k$.
\end{proof}

Using the weight $\omega_{ij,k}$, we define the basis functions%
\begin{subequations}%
\label{eq:basis_e_boundary}
\begin{align}
B_{ij,k}^e &= (1 - \omega_{ij,k}) \bracket{B_{i,j,k}^t + B_{j,i,k}^t}, \label{eq:basis_e_boundary_t} \\
B_{ij}^e &= \bracket{B_{i,j}^e + B_{j,i}^e} + \omega_{ij,k} \bracket{B_{i,j,k}^t + B_{j,i,k}^t}; \label{eq:basis_e_boundary_e}
\end{align}
\end{subequations}
see Figure~\ref{fig:basis_e_boundary} for an example. Notice that when $t_{ijk}$ is not in $\Tsym$, $B_{ij,k}^e$ and $B_{ij}^e$ are simply the sums $B_{i,j,k}^t + B_{j,i,k}^t$ and $B_{i,j}^e + B_{j,i}^e$, respectively.

We focus the investigation of $\C{2}$ super-smoothness on the triangle $t_{ijk}$ as both $B_{ij,k}^e$ and $B_{ij}^e$ are zero on every triangle of $\tri$ different from $t_{ijk}$.

\begin{figure}[t!]
\centering
\includegraphics[width = 0.49\textwidth]{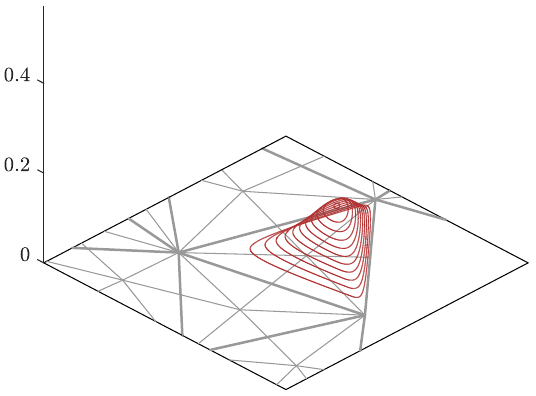}~
\includegraphics[width = 0.49\textwidth]{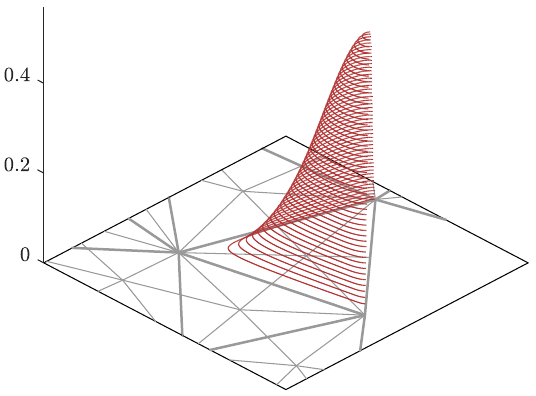}
\caption{The basis functions $B_{jk,i}^e$ (left) and $B_{jk}^e$ (right) obtained by combining the basis functions $B_{j,k,i}^t$, $B_{k,j,i}^t$, $B_{j,k}^e$, $B_{k,j}^e$ shown in Figure~\ref{fig:c1_basis_e}.}
\label{fig:basis_e_boundary}
\end{figure}

\begin{proposition}
\label{pro:c2_smoothness_bbe}
Let $e_{ij} \in E^b$ be a boundary edge of $\tri$. Suppose that $t_{ijk} \in T$ is the triangle of $\tri$ attached to $e_{ij}$. Then, the restrictions of $B_{ij,k}^e$ and $B_{ij}^e$ to $t_{ijk}$ are $\C{2}$ smooth across $[v_{ij}, v_{ijk}]$, $[v_{jk}, v_{ijk}]$, $[v_{ki}, v_{ijk}]$ and at $v_{ijk}$. In addition, if $t_{ijk} \in \Tsym$, then the restriction of $B_{ij}^e$ to $t_{ijk}$ is $\C{2}$ smooth.
\end{proposition}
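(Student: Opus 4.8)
The plan is to follow the blueprint of the proof of Proposition~\ref{pro:c2_smoothness_be}, translated from the interior-edge to the boundary-edge combination. Full $\C{2}$ smoothness of a function on $t_{ijk}$ is equivalent to its $\C{2}$ smoothness across the three inner edges $[v_{ij},v_{ijk}]$, $[v_{jk},v_{ijk}]$, $[v_{ki},v_{ijk}]$, across the three spokes $[v_i,v_{ijk}]$, $[v_j,v_{ijk}]$, $[v_k,v_{ijk}]$, and at the split point $v_{ijk}$. The first assertion concerns only the inner edges and the split point and should hold for an arbitrary $t_{ijk}\in T$; the second adds the spokes under $t_{ijk}\in\Tsym$. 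Since both $B_{ij,k}^e$ and $B_{ij}^e$ are supported on $t_{ijk}$ alone (the boundary edge $e_{ij}$ borders no other triangle), I would argue entirely in terms of the dual functionals applied to these two combinations.

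For the first assertion I would invoke Proposition~\ref{pro:c2_smoothness_et} and Remark~\ref{rem:c2_smoothness_et}, exactly as in the interior case, reading off the functional values from the nodal identities \eqref{eq:c1_basis_t} and \eqref{eq:c1_basis_e}. For $B_{ij,k}^e=(1-\omega_{ij,k})(B_{i,j,k}^t+B_{j,i,k}^t)$ one gets $\beta_{i,j,k}^t=\beta_{j,i,k}^t=1-\omega_{ij,k}$ while $\beta_{i,j}^e=\beta_{j,i}^e=0$ and every functional attached to $e_{jk}$ or $e_{ki}$ vanishes; for $B_{ij}^e=(B_{i,j}^e+B_{j,i}^e)+\omega_{ij,k}(B_{i,j,k}^t+B_{j,i,k}^t)$ one gets $\beta_{i,j,k}^t=\beta_{j,i,k}^t=\omega_{ij,k}$ and $\beta_{i,j}^e=\beta_{j,i}^e=1$, again with the $e_{jk},e_{ki}$ functionals vanishing. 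In each instance the pair of functionals that Proposition~\ref{pro:c2_smoothness_et} requires to coincide does coincide, so the inner-edge smoothness follows, and Remark~\ref{rem:c2_smoothness_et} upgrades this to $\C{2}$ smoothness at $v_{ijk}$.

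For the second assertion, when $t_{ijk}\in\Tsym$ the three collinearities hold, so I would apply Proposition~\ref{pro:c2_smoothness_vt} at each of $v_i$, $v_j$, $v_k$ to $S=B_{ij}^e$. The ``base'' condition of that proposition (of the form $\beta_{j,k,i}^t=\beta_{k,j,i}^t$, and its relabellings) is immediate, since the functionals appearing there are evaluated on triangles carrying no component of $B_{ij}^e$, so both sides vanish. The substantive step is the weighted-combination condition; for the spoke at $v_i$ its left-hand side is, by the boundary branch of the formula, $(1-\nu_{ij,k})\beta_{i,j,k}^t(B_{ij}^e)+\nu_{ij,k}\beta_{i,j}^e(B_{ij}^e)=(1-\nu_{ij,k})\omega_{ij,k}+\nu_{ij,k}$, which collapses to $0$ precisely because of the relation $(1-\nu_{ij,k})(1-\omega_{ij,k})=1$ supplied by Lemma~\ref{lem:relations_omega_nu_boundary}; the right-hand side is $0$ as well, since every functional of $B_{ij}^e$ attached to $t_{i,k,j}$ or to the edge $e_{ki}$ vanishes. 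The spokes at $v_j$ and $v_k$ follow from the same computation after the obvious relabelling, using that $B_{ij}^e$ is symmetric in $i$ and $j$.

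The main obstacle, as in Proposition~\ref{pro:c2_smoothness_be}, is this single cancellation in the spoke condition: one must correctly identify which dual functionals of $B_{ij}^e$ are nonzero (only $\beta_{i,j,k}^t$, $\beta_{j,i,k}^t$, $\beta_{i,j}^e$, $\beta_{j,i}^e$) and then recognize that the combination dictated by Lemma~\ref{lem:relations_omega_nu_boundary} annihilates exactly the surviving terms. It is worth emphasizing the asymmetry this reveals, paralleling the discussion after Proposition~\ref{pro:c2_smoothness_be}: the companion function $B_{ij,k}^e$ does \emph{not} become $\C{2}$ across the spokes even for $t_{ijk}\in\Tsym$, because for it the left-hand side of the spoke condition evaluates to $(1-\nu_{ij,k})(1-\omega_{ij,k})=1\neq 0$ while the right-hand side remains $0$.
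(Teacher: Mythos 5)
Your proposal is correct and takes essentially the same route as the paper's proof: \eqref{eq:c1_basis_t}, \eqref{eq:c1_basis_e}, Proposition~\ref{pro:c2_smoothness_et}, and Remark~\ref{rem:c2_smoothness_et} for the inner edges and the split point, and Proposition~\ref{pro:c2_smoothness_vt} combined with Lemma~\ref{lem:relations_omega_nu_boundary} for the spokes when $t_{ijk} \in \Tsym$. One cosmetic remark: for the spoke $[v_k, v_{ijk}]$ the relabelled conditions are not literally ``the same computation'' --- there the nonzero functionals enter through the base condition $\beta_{i,j,k}^t(B_{ij}^e) = \beta_{j,i,k}^t(B_{ij}^e)$ (both equal $\omega_{ij,k}$) while the weighted condition reads $0=0$ --- but all conditions hold exactly as you claim.
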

\begin{proof}
The $\C{2}$ smoothness of $B_{ij,k}^e$ and $B_{ij}^e$ across $[v_{ij}, v_{ijk}]$, $[v_{jk}, v_{ijk}]$, $[v_{ki}, v_{ijk}]$ and at $v_{ijk}$ follows from \eqref{eq:c1_basis_t}, \eqref{eq:c1_basis_e}, Proposition~\ref{pro:c2_smoothness_et}, and Remark~\ref{rem:c2_smoothness_et}. When $t_{ijk} \in \Tsym$, the $\C{2}$ smoothness of $B_{ij}^e$ across $[v_i, v_{ijk}]$, $[v_j, v_{ijk}]$, $[v_k, v_{ijk}]$ follows from Proposition~\ref{pro:c2_smoothness_vt} by applying \eqref{eq:c1_basis_t}, \eqref{eq:c1_basis_e}, and Lemma~\ref{lem:relations_omega_nu_boundary}.
\end{proof}

Again, as in Section~\ref{sec:basis_e_interior}, the property $t_{ijk} \in \Tsym$ does not imply that the basis function $B_{ij,k}^e$ restricted to $t_{ijk}$ is $\C{2}$ smooth across $[v_i, v_{ijk}]$. It follows from \eqref{eq:c1_basis_t} and Lemmas~\ref{lem:c2_smooth_vt_bloss_v} and \ref{lem:relations_omega_nu_boundary} that%
\begin{subequations}%
\label{eq:blossom_bound_be}
\begin{align}
\blossr{B_{ij,k}^e}{t_{i,j,k}}(v_i, v_j, v_k) &= 1, \label{eq:blossom_bound_be_t1} \\
\blossr{B_{ij,k}^e}{t_{i,k,j}}(v_i, v_j, v_k) &= 0, \label{eq:blossom_bound_be_t2}
\end{align}
which violates the $\C{2}$ smoothness condition given in \eqref{eq:c2_smooth_blossom_vt_v}.
\end{subequations}

\subsection{Basis Functions Associated with Triangles}
\label{sec:basis_t}

For a triangle $t_{ijk} \in \Tsym$, the basis functions can be further reduced. We define
\begin{equation}
\label{eq:basis_t}
B_{ijk}^t = B_{ij,k}^e + B_{jk,i}^e + B_{ki,j}^e;
\end{equation}
see Figure~\ref{fig:basis_t} for an example. 

The basis function $B_{ijk}^t$ is supported on the region determined by the triangle $t_{ijk}$ and the triangles of $\tri$ that share an edge with $t_{ijk}$. We now consider $\C{2}$ super-smoothness properties of $B_{ijk}^t$ on this region.

\begin{figure}[t!]
\centering
\includegraphics[width = 0.49\textwidth]{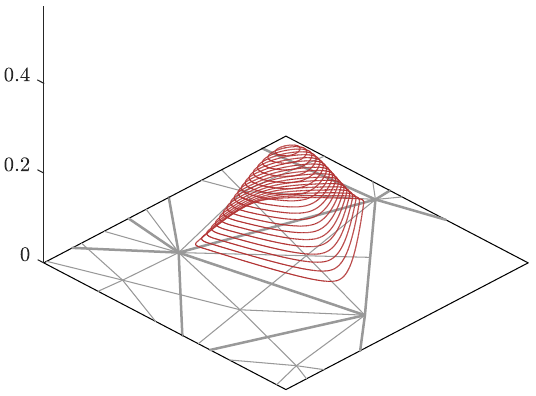}~
\includegraphics[width = 0.49\textwidth]{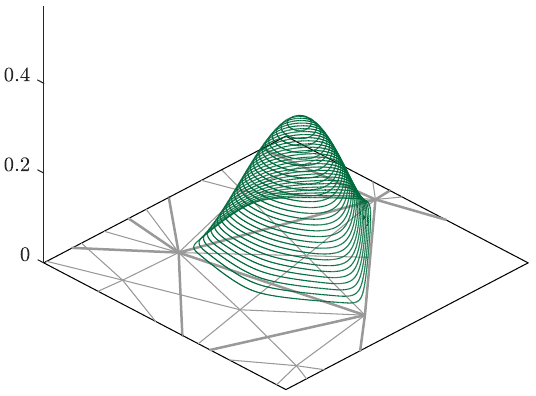}
\caption{The basis function $B_{ki,j}^e$ (left) and the basis function $B_{ijk}^t$ (right) obtained by combining the basis functions $B_{ij,k}^e$, $B_{jk,i}^e$ (shown in Figures \ref{fig:basis_e_interior} and \ref{fig:basis_e_boundary}), and $B_{ki,j}^e$.}
\label{fig:basis_t}
\end{figure}

\begin{proposition}
\label{pro:c2_smoothness_t_bt}
Let $t_{ijk} \in \Tsym$ be a triangle of $\tri$. Then, the restriction of $B_{ijk}^t$ to $t_{ijk}$ is $\C{2}$ smooth.
\end{proposition}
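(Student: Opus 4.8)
The plan is to establish $\C{2}$ smoothness of $B_{ijk}^t$ across each of the six interior edges of the $6$-split of $t_{ijk}$ and at the split point $v_{ijk}$; by the structure of the Powell--Sabin refinement this is equivalent to $\C{2}$ smoothness in the interior of $t_{ijk}$. I would separate the interior edges into the three \emph{edge-spokes} $[v_{ij}, v_{ijk}]$, $[v_{jk}, v_{ijk}]$, $[v_{ki}, v_{ijk}]$ and the three \emph{vertex-spokes} $[v_i, v_{ijk}]$, $[v_j, v_{ijk}]$, $[v_k, v_{ijk}]$, and handle the two groups in turn.

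The edge-spokes require no new work. Each summand $B_{ij,k}^e$, $B_{jk,i}^e$, $B_{ki,j}^e$ in the definition \eqref{eq:basis_t} is, by Proposition~\ref{pro:c2_smoothness_be} (or Proposition~\ref{pro:c2_smoothness_bbe} when the underlying edge of $\tri$ is on the boundary), $\C{2}$ smooth across all three edge-spokes and at $v_{ijk}$ on $t_{ijk}$; hence so is their sum $B_{ijk}^t$. By Remark~\ref{rem:c2_smoothness_et}, the smoothness across these three spokes already yields $\C{2}$ smoothness at $v_{ijk}$. For a vertex-spoke, say $[v_i, v_{ijk}]$, I would invoke Proposition~\ref{pro:c2_smoothness_vt}, whose collinearity hypotheses hold precisely because $t_{ijk}\in\Tsym$. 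It reduces $\C{2}$ smoothness there to the two equalities in \eqref{eq:c2_smooth_blossom_vt}. The equality \eqref{eq:c2_smooth_blossom_vt_t}, which by Lemma~\ref{lem:c2_smooth_vt_bloss_t} reads $\beta_{j,k,i}^t(B_{ijk}^t)=\beta_{k,j,i}^t(B_{ijk}^t)$, is exactly the condition \eqref{eq:c2_smoothness_et_t} for the edge-spoke $[v_{jk}, v_{ijk}]$, and therefore already follows from the smoothness established in the previous step.

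It remains to verify \eqref{eq:c2_smooth_blossom_vt_v}, namely that $\blossr{B_{ijk}^t}{t_{i,j,k}}(v_i, v_j, v_k)$ and $\blossr{B_{ijk}^t}{t_{i,k,j}}(v_i, v_j, v_k)$ coincide. Expanding $B_{ijk}^t = B_{ij,k}^e + B_{jk,i}^e + B_{ki,j}^e$, I evaluate each summand on the two sub-triangles $t_{i,j,k}$ and $t_{i,k,j}$ meeting along $[v_i, v_{ijk}]$. By \eqref{eq:blossom_inter_be} (or \eqref{eq:blossom_bound_be}), the term $B_{ij,k}^e$ contributes $1$ on $t_{i,j,k}$ and $0$ on $t_{i,k,j}$. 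An entirely analogous computation for $B_{ki,j}^e$, via Lemma~\ref{lem:c2_smooth_vt_bloss_v} together with Lemma~\ref{lem:relations_omega_nu} (or Lemma~\ref{lem:relations_omega_nu_boundary}) applied to the edge $e_{ki}$, gives $0$ on $t_{i,j,k}$ and $1$ on $t_{i,k,j}$; here the unit value lands on $t_{i,k,j}$ because $B_{ki,j}^e$ is assembled from $B_{i,k,j}^t$ (and its neighbour) rather than from functions supported on $t_{i,j,\cdot}$. Finally, $B_{jk,i}^e$ contributes $0$ on both sub-triangles, since the functionals $\beta_{i,j,\cdot}^t$ and $\beta_{i,k,\cdot}^t$ that enter Lemma~\ref{lem:c2_smooth_vt_bloss_v} all vanish on it. Summing the three contributions, both sides equal $1$, so \eqref{eq:c2_smooth_blossom_vt_v} holds and Proposition~\ref{pro:c2_smoothness_vt} gives $\C{2}$ smoothness across $[v_i, v_{ijk}]$. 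Since the definition \eqref{eq:basis_t} is invariant under the cyclic relabelling $(i,j,k)\mapsto(j,k,i)$, the same argument applies verbatim to $[v_j, v_{ijk}]$ and $[v_k, v_{ijk}]$; combined with the edge-spokes and the split point, this yields $\C{2}$ smoothness in the interior of $t_{ijk}$.

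I expect the main obstacle to be the bookkeeping in the third step: one must track precisely which of the underlying functions $B_{\cdot,\cdot,\cdot}^t$ (and their boundary analogues) enter each of $B_{ij,k}^e$, $B_{jk,i}^e$, $B_{ki,j}^e$, and confirm that the weights $\omega$ and $\nu$ linked by Lemma~\ref{lem:relations_omega_nu} (resp.\ Lemma~\ref{lem:relations_omega_nu_boundary}) place the two unit contributions on opposite sub-triangles of the spoke. The conceptual point is that the recombinations in \eqref{eq:basis_e_interior} and \eqref{eq:basis_e_boundary} are engineered so that the single jump of $B_{ij,k}^e$ across $[v_i, v_{ijk}]$ recorded in \eqref{eq:blossom_inter_be} is cancelled exactly by the jump of $B_{ki,j}^e$, restoring full $\C{2}$ smoothness for the symmetric combination $B_{ijk}^t$.
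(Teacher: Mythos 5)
Your proof is correct and follows essentially the same route as the paper's: $\C{2}$ smoothness across $[v_{ij},v_{ijk}]$, $[v_{jk},v_{ijk}]$, $[v_{ki},v_{ijk}]$ via Propositions~\ref{pro:c2_smoothness_be} and~\ref{pro:c2_smoothness_bbe}, and across $[v_i,v_{ijk}]$, $[v_j,v_{ijk}]$, $[v_k,v_{ijk}]$ via Proposition~\ref{pro:c2_smoothness_vt} combined with \eqref{eq:c1_basis_t} and Lemmas~\ref{lem:relations_omega_nu} and~\ref{lem:relations_omega_nu_boundary}. The only difference is that you spell out the blossom bookkeeping (the jump of $B_{ij,k}^e$ in \eqref{eq:blossom_inter_be} being cancelled by that of $B_{ki,j}^e$) which the paper's terse proof leaves implicit, and your accounting is accurate.
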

\begin{proof}
The $\C{2}$ smoothness of $B_{ijk}^t$ across $[v_{ij}, v_{ijk}]$, $[v_{jk}, v_{ijk}]$, $[v_{ki}, v_{ijk}]$ follows from Propositions~\ref{pro:c2_smoothness_be} and \ref{pro:c2_smoothness_bbe}.
The $\C{2}$ smoothness of $B_{ijk}^t$ across $[v_i, v_{ijk}]$, $[v_j, v_{ijk}]$, $[v_k, v_{ijk}]$ follows from Proposition~\ref{pro:c2_smoothness_vt} by applying \eqref{eq:c1_basis_t} and Lemmas~\ref{lem:relations_omega_nu} and~\ref{lem:relations_omega_nu_boundary}.
\end{proof}

\begin{proposition}
\label{pro:c2_smoothness_nt_bt}
Let $t_{ijk} \in \Tsym$ be a triangle of $\tri$, $e_{ij} \in E \setminus E^b$ an interior edge of $\tri$, and $t_{ijk'} \in T$ the second triangle of $\tri$ attached to $e_{ij}$. If $t_{ijk'} \in \Tsym$, then the restriction of $B_{ijk}^t$ to $t_{ijk'}$ is $\C{2}$ smooth. Otherwise, the restriction is $\C{2}$ smooth across $[v_{ij}, v_{ijk'}]$, $[v_{jk'}, v_{ijk'}]$, $[v_{k'i}, v_{ijk'}]$ and at $v_{ijk'}$.
\end{proposition}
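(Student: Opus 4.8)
The plan is to reduce the behaviour of $B_{ijk}^t$ on the neighbouring triangle $t_{ijk'}$ to that of a single one of its three constituent edge functions. Recalling the definition \eqref{eq:basis_t}, $B_{ijk}^t = B_{ij,k}^e + B_{jk,i}^e + B_{ki,j}^e$, I would first determine which summands survive on $t_{ijk'}$. The functions $B_{jk,i}^e$ and $B_{ki,j}^e$ are associated with the edges $e_{jk}$ and $e_{ki}$, hence are supported on $t_{ijk}$ together with the second triangles of $\tri$ attached to those edges (if $e_{jk}$, $e_{ki}$ are interior). Since $t_{ijk'}$ meets $t_{ijk}$ only along $e_{ij}$ and contains neither $v_k$ nor the edges $e_{jk}$, $e_{ki}$, both of these functions vanish identically on $t_{ijk'}$. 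Consequently $B_{ijk}^t$ coincides with $B_{ij,k}^e$ on $t_{ijk'}$, and the whole proposition reduces to a smoothness statement for the single function $B_{ij,k}^e$ restricted to $t_{ijk'}$.

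For that statement I would appeal to Proposition~\ref{pro:c2_smoothness_be}, applied with the roles of the two triangles sharing $e_{ij}$ interchanged. The pair $\{B_{ij,k}^e, B_{ij,k'}^e\}$ defined in \eqref{eq:basis_e_interior} is manifestly symmetric under the relabelling $k \leftrightarrow k'$ (which swaps $t_{ijk} \leftrightarrow t_{ijk'}$ and $\omega_{ij,k} \leftrightarrow \omega_{ij,k'}$), so Proposition~\ref{pro:c2_smoothness_be} may be read with $t_{ijk'}$ as the distinguished triangle. This immediately gives $\C{2}$ smoothness of $B_{ij,k}^e$ across the interior edges $[v_{ij}, v_{ijk'}]$, $[v_{jk'}, v_{ijk'}]$, $[v_{k'i}, v_{ijk'}]$ and at $v_{ijk'}$ with no extra hypothesis, together with full $\C{2}$ smoothness on $t_{ijk'}$ exactly when $t_{ijk'} \in \Tsym$. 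Combined with the first paragraph, this is precisely the asserted dichotomy.

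The one point that needs care is the legitimacy of this role-swap, because Proposition~\ref{pro:c2_smoothness_be} is stated asymmetrically: it grants full smoothness only to $B_{ij,k'}^e$ on $t_{ijk}$, namely the function that is \emph{not} peaked there. On $t_{ijk}$ it is $B_{ij,k}^e$ whose blossom equals $1$ on the $k$-side and $0$ on the $k'$-side, violating \eqref{eq:c2_smooth_blossom_vt_v}; by the same computation carried out on $t_{ijk'}$, the peaked function there is $B_{ij,k'}^e$, leaving $B_{ij,k}^e$ as the smooth one. I would therefore make explicit that the construction \eqref{eq:basis_e_interior} and the weight identity of Lemma~\ref{lem:relations_omega_nu} are invariant under exchanging the two triangles attached to $e_{ij}$, so that no separate verification of the conditions of Proposition~\ref{pro:c2_smoothness_vt} is required on $t_{ijk'}$. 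Once this symmetry is recorded, the argument is pure bookkeeping and invokes no new calculation.
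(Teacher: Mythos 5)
Your proposal is correct and follows essentially the same route as the paper, whose entire proof is the single citation of Proposition~\ref{pro:c2_smoothness_be}; your two steps (the vanishing of $B_{jk,i}^e$ and $B_{ki,j}^e$ on $t_{ijk'}$, so that $B_{ijk}^t$ coincides there with $B_{ij,k}^e$, and the $k \leftrightarrow k'$ relabelling of Proposition~\ref{pro:c2_smoothness_be}) are precisely the bookkeeping that citation leaves implicit. Your explicit check that the construction \eqref{eq:basis_e_interior} and Lemma~\ref{lem:relations_omega_nu} are symmetric under exchanging the two triangles attached to $e_{ij}$ is a worthwhile clarification of the asymmetric-looking statement, but it is not a departure from the paper's argument.
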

\begin{proof}
The statement follows from Proposition~\ref{pro:c2_smoothness_be}.
\end{proof}

\section{A Reduced Spline Space}
\label{sec:reduced_space}

In this section we explain how the reduced basis functions introduced in Section~\ref{sec:ss_basis} can be arranged to span a subspace of $\splspace{3}{1}{\PST}$ that maintains cubic precision. We investigate the representation of splines from the considered subspace and derive their main properties that make them attractive for use in approximation methods and geometric design. We also outline their main advantages compared to the basis of $\splspace{3}{1}{\PST}$ described in Theorem~\ref{th:c1_basis}.

\subsection{Basis Functions}

We start by describing a set comprising $3|V| + |\Tsym| + 3 |T \setminus \Tsym| + |E^b|$ spline functions. To identify the $ 3|T \setminus \Tsym|$ parameters, we introduce the following notation. For an edge $e_{ij} \in E$, let $T_{ij}^e \subseteq T$ denote the set of triangles attached to $e_{ij}$. The set $T_{ij}^e$ has either one element (if $e_{ij} \in E^b$) or two elements (if $e_{ij} \in E \setminus E^b$). Let $\redsplspaceb \subset \splspace{3}{1}{\PST}$ be defined as follows.
\begin{itemize}
\item For each $v_i \in V$ and $r \in \set{1,2,3}$, the set $\redsplspaceb$ contains the function $B_{i,r}^v$ defined by \eqref{eq:c1_basis_v}.

\item For each $t_{ijk} \in \Tsym$, the set $\redsplspaceb$ contains the function $B_{ijk}^t$ defined by \eqref{eq:basis_t}.

\item For each $e_{ij} \in E$ and $t_{ijk} \in T_{ij}^e \setminus \Tsym$, the set $\redsplspaceb$ contains the function $B_{ij,k}^e$ defined by \eqref{eq:basis_e_interior_t1} if $e_{ij} \in E \setminus E^b$ or \eqref{eq:basis_e_boundary_t} if $e_{ij} \in E^b$.

\item For each $e_{ij} \in E^b$, the set $\redsplspaceb$ contains the function $B_{ij}^e$ defined by \eqref{eq:basis_e_boundary_e}.
\end{itemize}
We denote by $\redsplspace{3}{1}{\PST}$ the vector space spanned by the functions in $\redsplspaceb$. The following theorems list some of its properties.

\begin{theorem}
The space $\redsplspace{3}{1}{\PST}$ is a subspace of $\splspace{3}{1}{\PST}$.
\end{theorem}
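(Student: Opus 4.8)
The statement to prove is that $\redsplspace{3}{1}{\PST}$ is a subspace of $\splspace{3}{1}{\PST}$. Since $\redsplspace{3}{1}{\PST}$ is defined as the span of the functions in $\redsplspaceb$, and a span is automatically a vector space, the only thing requiring verification is the inclusion $\redsplspaceb \subset \splspace{3}{1}{\PST}$: every generating function must itself lie in the $\C{1}$ cubic spline space. The plan is therefore to check membership for each of the four types of functions listed in the definition of $\redsplspaceb$.

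First I would note that $\splspace{3}{1}{\PST}$ is a vector space (it is defined as a set of functions closed under the obvious pointwise linear operations, being the kernel-type condition $S \in \C{1}(\Omega)$ together with piecewise-polynomial restrictions). Hence any finite linear combination of elements of $\splspace{3}{1}{\PST}$ is again in $\splspace{3}{1}{\PST}$. This is the key structural observation that reduces everything to the generating functions. The functions $B_{i,r}^v$ are by their very definition \eqref{eq:c1_basis_v} elements of $\splspace{3}{1}{\PST}$, so these require no further argument. Each of the remaining three types is constructed as an explicit finite linear combination of the original B-spline functions $B_{i,j,k}^t$, $B_{i,j}^e$, $B_{j,i}^e$ of $\splspaceb$, which by Theorem~\ref{th:c1_basis} all belong to $\splspace{3}{1}{\PST}$.

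Concretely, I would proceed as follows. The function $B_{ij,k}^e$ is defined by \eqref{eq:basis_e_interior_t1} or \eqref{eq:basis_e_boundary_t} as a linear combination of the $B_{i,j,k}^t$ type functions; the function $B_{ij}^e$ is defined by \eqref{eq:basis_e_boundary_e} as a combination of $B_{i,j}^e$, $B_{j,i}^e$, and $B_{i,j,k}^t$ type functions; and the function $B_{ijk}^t$ is defined by \eqref{eq:basis_t} as a sum $B_{ij,k}^e + B_{jk,i}^e + B_{ki,j}^e$, each summand being itself a linear combination of the original basis functions. Invoking closure under linear combinations, every function in $\redsplspaceb$ lies in $\splspace{3}{1}{\PST}$, whence $\redsplspace{3}{1}{\PST} = \operatorname{span}(\redsplspaceb) \subseteq \splspace{3}{1}{\PST}$.

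\textbf{Main obstacle.} There is essentially no analytic difficulty here: the result is immediate once one recognizes that the reduced functions are defined as linear combinations of functions already known to lie in $\splspace{3}{1}{\PST}$. The only point demanding a modicum of care is bookkeeping — confirming that \emph{every} case in the definition of $\redsplspaceb$ (interior versus boundary edges, symmetric versus non-symmetric triangles) produces a function expressed through \eqref{eq:c1_basis_v}, \eqref{eq:basis_e_interior}, \eqref{eq:basis_e_boundary}, or \eqref{eq:basis_t}, all of which reference only members of $\splspaceb$. I expect the proof to be a single short paragraph invoking the linearity of the spline space together with Theorem~\ref{th:c1_basis} and the defining equations of the reduced functions.
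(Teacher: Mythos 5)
Your proposal is correct and follows exactly the paper's argument: the paper's proof is the single observation that the functions in $\redsplspaceb$ are linear combinations of the basis functions of $\splspace{3}{1}{\PST}$ from Theorem~\ref{th:c1_basis}, which is precisely your reduction via the defining equations \eqref{eq:c1_basis_v}, \eqref{eq:basis_e_interior}, \eqref{eq:basis_e_boundary}, and \eqref{eq:basis_t} together with closure of the spline space under linear combinations. Your write-up merely spells out the case bookkeeping that the paper leaves implicit.
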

\begin{proof}
The functions from $\redsplspaceb$ are linear combinations of the basis functions of $\splspace{3}{1}{\PST}$ presented in Theorem~\ref{th:c1_basis}.
\end{proof}

\begin{theorem}
A spline $S \in \redsplspace{3}{1}{\PST}$ possesses the following $\C{2}$ smoothness properties.
\begin{itemize}
\item For every triangle $t_{ijk} \in \Tsym$ of $\tri$, the restriction of $S$ to $t_{ijk}$ is $\C{2}$ smooth.
\item For every triangle $t_{ijk} \in T \setminus \Tsym$ of $\tri$, the restriction of $S$ to $t_{ijk}$ is $\C{2}$ smooth across $[v_{ij}, v_{ijk}]$, $[v_{jk}, v_{ijk}]$, $[v_{ki}, v_{ijk}]$ and at $v_{ijk}$.
\end{itemize}
\end{theorem}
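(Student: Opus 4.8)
The plan is to reduce the statement to the individual functions of $\redsplspaceb$ by exploiting linearity. Both $\C{2}$ smoothness across a fixed edge and $\C{2}$ smoothness at a fixed point are linear conditions, so a linear combination of functions that each satisfy such a condition satisfies it as well. Since every $S \in \redsplspace{3}{1}{\PST}$ is, by definition, a linear combination of the functions in $\redsplspaceb$, it suffices to verify the two asserted smoothness properties for each function of $\redsplspaceb$ on each triangle $t_{ijk}$ of $\tri$.

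First I would fix a triangle $t_{ijk}$ and enumerate the functions of $\redsplspaceb$ that do not vanish on it. From the support descriptions in Section~\ref{sec:ss_basis}, these are precisely: the vertex functions $B_{i,r}^v$, $B_{j,r}^v$, $B_{k,r}^v$ for $r \in \set{1,2,3}$; the functions attached to the three edges of $t_{ijk}$ --- either the individual edge functions $B_{ij,k}^e$, $B_{jk,i}^e$, $B_{ki,j}^e$ together with $B_{ij}^e$ on each boundary edge when $t_{ijk} \notin \Tsym$, or the single triangle function $B_{ijk}^t$ when $t_{ijk} \in \Tsym$; and, for each edge-adjacent triangle $t_{ijk'}$, the function that extends onto $t_{ijk}$, namely $B_{ij,k'}^e$ if $t_{ijk'} \notin \Tsym$ and $B_{ijk'}^t$ if $t_{ijk'} \in \Tsym$.

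Next I would treat the two cases separately. When $t_{ijk} \in \Tsym$, each enumerated function is $\C{2}$ smooth on the whole of $t_{ijk}$: this is Proposition~\ref{pro:c2_smoothness_bv} for the vertex functions, Proposition~\ref{pro:c2_smoothness_t_bt} for $B_{ijk}^t$, the second assertions of Propositions~\ref{pro:c2_smoothness_be} and~\ref{pro:c2_smoothness_bbe} for any adjacent edge function $B_{ij,k'}^e$ and any boundary function $B_{ij}^e$, and Proposition~\ref{pro:c2_smoothness_nt_bt} --- applied with the symmetrically refined neighbour $t_{ijk'}$ in the role of the proposition's distinguished triangle --- for any adjacent triangle function $B_{ijk'}^t$. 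Summing these gives full $\C{2}$ smoothness of $S$ on $t_{ijk}$. When $t_{ijk} \in T \setminus \Tsym$, the same propositions instead yield, for every enumerated function, $\C{2}$ smoothness only across $[v_{ij}, v_{ijk}]$, $[v_{jk}, v_{ijk}]$, $[v_{ki}, v_{ijk}]$ and at $v_{ijk}$ --- via the ``otherwise'' branches of Propositions~\ref{pro:c2_smoothness_bv} and~\ref{pro:c2_smoothness_nt_bt} and the first assertions of Propositions~\ref{pro:c2_smoothness_be} and~\ref{pro:c2_smoothness_bbe} --- which is exactly the weaker conclusion claimed; summing again transfers this to $S$.

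I expect the only delicate point to be the bookkeeping of the supports rather than any new analytic content, since every per-function smoothness fact is already supplied by the propositions of Section~\ref{sec:ss_basis}. In particular, the step most easily missed is that a triangle function $B_{ijk'}^t$ coming from a symmetrically refined neighbour does not vanish on $t_{ijk}$ and must be controlled through Proposition~\ref{pro:c2_smoothness_nt_bt} with the two triangles exchanged; once every nonvanishing function has been matched to the appropriate proposition, the claim for $S$ follows immediately by linearity.
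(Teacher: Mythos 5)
Your proposal is correct and takes essentially the same approach as the paper: the paper's own proof is the single line that the statement follows from Propositions~\ref{pro:c2_smoothness_bv}--\ref{pro:c2_smoothness_nt_bt}, which is precisely the per-basis-function case analysis you perform, with the linearity of the $\C{2}$ conditions and the support bookkeeping (including the role-swapped application of Proposition~\ref{pro:c2_smoothness_nt_bt} to a symmetric neighbour's function $B_{ijk'}^t$) spelled out explicitly.
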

\begin{proof}
The statement follows from Propositions~\ref{pro:c2_smoothness_bv}--\ref{pro:c2_smoothness_nt_bt}.
\end{proof}

\subsection{Spline Representation}

With the functions from $\redsplspaceb$, we associate the following functionals acting on $S \in \redsplspace{3}{1}{\PST}$.
\begin{itemize}
\item For each $v_i \in V$ and $r \in \set{1,2,3}$, let $\beta_{i,r}^v(S)$ be defined as in \eqref{eq:beta_v}.
\item For each $t_{ijk} \in \Tsym$, let $\beta_{ijk}^t(S) = \blossr{S}{t_{i,j,k}}(v_i, v_j, v_k)$.
\item For each $e_{ij} \in E$ and $t_{ijk} \in T_{ij}^e \setminus \Tsym$, let $\beta_{ij,k}^e(S) = \blossr{S}{t_{i,j,k}}(v_i, v_j, v_{ijk})$.
\item For each $e_{ij} \in E^b$, let $\beta_{ij}^e(S) = \blossr{S}{e_{i,j}}(v_i, v_j, v_{ij})$.
\end{itemize}
Note that by Lemma~\ref{lem:c2_smooth_vt_bloss_v} we have
\begin{equation*}
\beta_{ijk}^t(S) = (1 - \nu_{ij,k}) \beta_{i,j,k}^t(S) + \nu_{ij,k}
\begin{cases}
\beta_{i,j,k'}^t(S) & \text{if} \ e_{ij} \in E \setminus E^b, \\
\beta_{i,j}^e(S) & \text{if} \ e_{ij} \in E^b,
\end{cases}
\end{equation*}
and by Proposition~\ref{pro:c2_smoothness_et},
\begin{equation*}
\beta_{ij,k}^e(S) = \beta_{i,j,k}^t(S) = \beta_{j,i,k}^t(S), \qquad
\beta_{ij}^e(S) = \beta_{i,j}^e(S) = \beta_{j,i}^e(S).
\end{equation*}
We now provide relations between these functionals and the considered basis functions.

\begin{lemma}
\label{lem:biorthogonality_v}
Let $B \in \redsplspaceb$. For each $v_i \in V$ and $r \in \set{1,2,3}$,
\begin{equation*}
\beta_{i,r}^v(B) =
\begin{cases}
1 & \text{if} \ B = B_{i,r}^v, \\
0 & \text{otherwise}.
\end{cases}
\end{equation*}
\end{lemma}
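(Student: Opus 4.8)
The plan is to evaluate $\beta_{i,r}^v$ on each of the four types of functions that comprise $\redsplspaceb$ and show that the result is $1$ exactly when the function is $B_{i,r}^v$ itself and $0$ otherwise. The key observation is that every function in $\redsplspaceb$ other than the vertex functions $B_{i,r}^v$ is built as a linear combination of the triangle functions $B_{i,j,k}^t$ and boundary-edge functions $B_{i,j}^e$ from $\splspaceb$, so I can reduce everything to the biorthogonality relations \eqref{eq:c1_basis_v}, \eqref{eq:c1_basis_t}, and \eqref{eq:c1_basis_e} satisfied by the original basis. Since $\beta_{i,r}^v$ is one of the functionals in $\splspacedb$, those relations tell me immediately how it acts on each original basis function.

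First I would treat the case $B = B_{i',r'}^v$. By \eqref{eq:c1_basis_v}, $\beta_{i,r}^v(B_{i',r'}^v)$ equals $1$ if $(i',r')=(i,r)$ and $0$ otherwise, which is exactly the claim for this family. Next, for the reduced triangle functions $B_{ijk}^t$ defined in \eqref{eq:basis_t}, the combined edge functions $B_{ij,k}^e$ from \eqref{eq:basis_e_interior_t1} and \eqref{eq:basis_e_boundary_t}, and the boundary function $B_{ij}^e$ from \eqref{eq:basis_e_boundary_e}, I expand each as its defining linear combination of the functions $B_{i,j,k}^t$ and $B_{i,j}^e$, apply the linearity of the functional $\beta_{i,r}^v$, and invoke \eqref{eq:c1_basis_t} and \eqref{eq:c1_basis_e}. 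Because $\beta_{i,r}^v$ never coincides with any $\beta_{i,j,k}^t$ or $\beta_{i,j}^e$ (it belongs to the vertex family of $\splspacedb$), each term in the expansion vanishes, so $\beta_{i,r}^v(B) = 0$ for all three of these remaining function types.

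The main point to be careful about is not a deep obstacle but a bookkeeping one: I must confirm that the functional $\beta_{i,r}^v$ is genuinely distinct from every functional $\beta_{i,j,k}^t$ and $\beta_{i,j}^e$ appearing in the expansions, so that the ``otherwise'' branch of \eqref{eq:c1_basis_t} and \eqref{eq:c1_basis_e} always applies. This is immediate from the fact that $\splspacedb$ is the disjoint union of the three families of functionals in \eqref{eq:beta}, indexed respectively by vertices, triangles of $\PSR{T}$, and boundary edges of $\PSR{E}^b$; the vertex functionals carry the superscript $v$ and are evaluated at the blossom arguments $(v_i, v_i, v_i + 3(q_{i,r}^v - v_i))$, which no triangle or edge functional reproduces. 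Consequently every off-diagonal pairing yields zero, completing the verification.
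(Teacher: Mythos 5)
Your proof is correct and follows essentially the same route as the paper: the diagonal case is read off from \eqref{eq:c1_basis_v}, and every other element of $\redsplspaceb$ is expanded by linearity as a combination of functions in $\splspaceb\setminus\tset{B_{i,r}^v}$, on which $\beta_{i,r}^v$ vanishes by the biorthogonality relations \eqref{eq:c1_basis}. The paper compresses your type-by-type case analysis into a single sentence, but the underlying argument is identical.
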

\begin{proof}
Since $B_{i,r}^v \in \splspaceb$ and $\beta_{i,r}^v \in \splspacedb$, the property \eqref{eq:c1_basis_v} implies  $\beta_{i,r}^v(B_{i,r}^v) = 1$. Moreover, since every $B \in \redsplspaceb \setminus \tset{B_{i,r}^v}$ is a linear combination of the basis functions in $\splspaceb \setminus \tset{B_{i,r}^v}$, the properties in \eqref{eq:c1_basis} imply $\beta_{i,r}^v(B) = 0$.
\end{proof}

\begin{lemma}
\label{lem:biorthogonality_t}
Let $B \in \redsplspaceb$. For each $t_{ijk} \in \Tsym$,
\begin{equation*}
\beta_{ijk}^t(B) =
\begin{cases}
1 & \text{if} \ B = B_{ijk}^t, \\
0 & \text{otherwise}.
\end{cases}
\end{equation*}
\end{lemma}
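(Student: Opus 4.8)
The plan is to establish the biorthogonality relation $\beta_{ijk}^t(B) = \delta_{B, B_{ijk}^t}$ by tracing how the functional $\beta_{ijk}^t$ acts on each of the basis functions constituting $\redsplspaceb$. The key observation is that $\beta_{ijk}^t(S) = \blossr{S}{t_{i,j,k}}(v_i,v_j,v_k)$ coincides (via Lemma~\ref{lem:c2_smooth_vt_bloss_v}, as already recorded just before the statement) with the convex combination $(1-\nu_{ij,k})\beta_{i,j,k}^t(S) + \nu_{ij,k}\,\beta_{i,j,k'}^t(S)$ in the interior-edge case, and similarly with the boundary-edge expression. Since every element of $\redsplspaceb$ is a known linear combination of the original functions in $\splspaceb$, and the action of the old functionals $\beta_{i,j,k}^t$, $\beta_{i,j,k'}^t$, $\beta_{i,j}^e$ on those old functions is dictated by the duality relations \eqref{eq:c1_basis}, I can compute $\beta_{ijk}^t(B)$ purely combinatorially.

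First I would handle $B = B_{ijk}^t$ itself. Expanding the definition \eqref{eq:basis_t}, $B_{ijk}^t = B_{ij,k}^e + B_{jk,i}^e + B_{ki,j}^e$, and each summand is in turn a combination of the $B_{\cdot,\cdot,\cdot}^t$ (and possibly $B_{\cdot,\cdot}^e$) functions via \eqref{eq:basis_e_interior} or \eqref{eq:basis_e_boundary}. Using Lemma~\ref{lem:relations_omega_nu} and Lemma~\ref{lem:relations_omega_nu_boundary}, the weight identity $(1-\nu_{ij,k})(1-\omega_{ij,k}) + \nu_{ij,k}\,\omega_{ij,k'} = 1$ (respectively its boundary analogue $(1-\nu_{ij,k})(1-\omega_{ij,k})=1$) is exactly what makes the convex combination defining $\beta_{ijk}^t$ collapse to the coefficient of the $B_{i,j,k}^t + B_{j,i,k}^t$ pair inside $B_{ij,k}^e$, yielding the value $1$. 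The contributions from $B_{jk,i}^e$ and $B_{ki,j}^e$ vanish because their constituent $\beta$-functionals are not among those on which $\beta_{i,j,k}^t$ or $\beta_{i,j,k'}^t$ evaluate to a nonzero value, by \eqref{eq:c1_basis}.

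Next I would show $\beta_{ijk}^t(B) = 0$ for every other $B \in \redsplspaceb$. For $B = B_{i,r}^v$ this is immediate from \eqref{eq:c1_basis_v} together with the expansion of $\beta_{ijk}^t$ into old triangle/edge functionals, none of which coincide with the vertex functional $\beta_{i,r}^v$. For a distinct $B_{i'j'k'}^t \in \Tsym$, for an unreduced $B_{i'j',k'}^e$, or for a boundary $B_{i'j'}^e$, I would argue that the old functionals appearing in the expansion of $\beta_{ijk}^t$ (namely $\beta_{i,j,k}^t$ and either $\beta_{i,j,k'}^t$ or $\beta_{i,j}^e$) all annihilate the old functions constituting these other $B$'s, again by \eqref{eq:c1_basis}. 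The one case demanding care is the neighbouring triangle sharing edge $e_{ij}$: the function $B_{ij,k'}^e$ or the reduced $B_{ijk'}^t$ (when $t_{ijk'}\in\Tsym$) does involve $B_{i,j,k'}^t + B_{j,i,k'}^t$, which is precisely the pair on which $\beta_{i,j,k'}^t$ acts nontrivially, so one must verify that the weight $\omega_{ij,k'}$ entering such a neighbour does not produce a spurious contribution.

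\textbf{The main obstacle} will be that last verification: showing that the $\nu_{ij,k}\beta_{i,j,k'}^t$ term in $\beta_{ijk}^t$ does not pick up a nonzero value from basis functions living on the opposite side of $e_{ij}$. This is where the precise bookkeeping of the weights $\omega_{ij,k}, \omega_{ij,k'}$ across the shared edge, and their relation to $\nu_{ij,k}$ through Lemma~\ref{lem:relations_omega_nu}, must be invoked carefully; the index convention (distinguishing $\beta_{i,j,k'}^t$ from $\beta_{j,i,k'}^t$, etc.) is what guarantees the cancellation. Everything else reduces to mechanical application of the duality \eqref{eq:c1_basis} and the two weight lemmas.
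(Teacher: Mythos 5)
Your overall route is the same as the paper's: expand $\beta_{ijk}^t$ through Lemma~\ref{lem:c2_smooth_vt_bloss_v} into the old functionals $\beta_{i,j,k}^t$ and $\beta_{i,j,k'}^t$ (or $\beta_{i,j}^e$), evaluate these on the constituents of each $B\in\redsplspaceb$ via the duality \eqref{eq:c1_basis}, and let the weight identities of Lemmas~\ref{lem:relations_omega_nu} and~\ref{lem:relations_omega_nu_boundary} produce the values $1$ and $0$; this is exactly how the paper argues (it packages the value-$1$ computation as \eqref{eq:blossom_inter_be_t1} and \eqref{eq:blossom_bound_be_t1}). Note also that the cancellation you defer as the ``main obstacle'' does close, and it is a two-line computation rather than a delicate one: for the neighbour across $e_{ij}$ one gets $\beta_{ijk}^t(B_{ij,k'}^e)=(1-\nu_{ij,k})\,\omega_{ij,k}+\nu_{ij,k}\,(1-\omega_{ij,k'})$, while expanding Lemma~\ref{lem:relations_omega_nu} gives $(1-\nu_{ij,k})\,\omega_{ij,k}=-\nu_{ij,k}\,(1-\omega_{ij,k'})$, hence the value is $0$; since the remaining summands $B_{jk',i}^e$, $B_{k'i,j}^e$ of $B_{ijk'}^t$ are annihilated, the same conclusion covers the case $t_{ijk'}\in\Tsym$.

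There is, however, one concrete gap: you place every boundary function $B_{i'j'}^e$ in the ``pure annihilation'' bucket, and this is false when the boundary edge is an edge of $t_{ijk}$ itself. If $e_{ij}\in E^b$ and $t_{ijk}\in\Tsym$, then by \eqref{eq:basis_e_boundary_e} the function $B_{ij}^e$ contains $B_{i,j}^e+B_{j,i}^e$ together with $\omega_{ij,k}\bracket{B_{i,j,k}^t+B_{j,i,k}^t}$, where $\omega_{ij,k}\neq 0$ precisely because $t_{ijk}\in\Tsym$ forces $w_{ij,k}=v_k$; meanwhile the boundary version of the expansion of $\beta_{ijk}^t$ is $(1-\nu_{ij,k})\beta_{i,j,k}^t+\nu_{ij,k}\beta_{i,j}^e$ with $\nu_{ij,k}\neq0$. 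Both terms therefore pick up nonzero contributions, giving $\beta_{ijk}^t(B_{ij}^e)=(1-\nu_{ij,k})\,\omega_{ij,k}+\nu_{ij,k}$, and this vanishes only because Lemma~\ref{lem:relations_omega_nu_boundary} yields $(1-\nu_{ij,k})\,\omega_{ij,k}=-\nu_{ij,k}$. In other words, this case requires the same cancellation mechanism as the interior neighbour, not annihilation --- which is why the paper's proof of the zero case cites Lemma~\ref{lem:relations_omega_nu_boundary} and \eqref{eq:blossom_bound_be_t2} alongside Lemma~\ref{lem:relations_omega_nu}. Once you add this case (and actually write out the deferred cancellation), your proof is complete and coincides with the paper's.
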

\begin{proof}
By \eqref{eq:blossom_inter_be_t1} and \eqref{eq:blossom_bound_be_t1}, the value of $\beta_{ijk}^t(B_{ij,k}^e)$ is equal to $1$. Additionally, by Lemma~\ref{lem:c2_smooth_vt_bloss_v}, the definitions of $B_{jk,i}^e$ and $B_{ki,j}^e$ analogous to \eqref{eq:basis_e_interior_t1}, and \eqref{eq:c1_basis_t}, the values of $\beta_{ijk}^t(B_{jk,i}^e)$ and $\beta_{ijk}^t(B_{ki,j}^e)$ are equal to $0$. In view of \eqref{eq:basis_t}, this confirms that $\beta_{ijk}^t(B_{ijk}^t) = 1$. If $B \in \redsplspaceb \setminus \tset{B_{ijk}^t}$, then \eqref{eq:blossom_inter_be_t2}, \eqref{eq:blossom_bound_be_t2}, Lemmas~\ref{lem:c2_smooth_vt_bloss_v}, \ref{lem:relations_omega_nu}, \ref{lem:relations_omega_nu_boundary}, and the properties in \eqref{eq:c1_basis} imply $\beta_{ijk}^t(B) = 0$.
\end{proof}

\begin{lemma}
\label{lem:biorthogonality_e}
Let $B \in \redsplspaceb$. For each $e_{ij} \in E$ and $t_{ijk} \in T_{ij}^e \setminus \Tsym$,
\begin{equation*}
\beta_{ij,k}^e(B) =
\begin{cases}
1 & \text{if} \ B = B_{ij,k}^e, \\
0 & \text{otherwise}.
\end{cases}
\end{equation*}
\end{lemma}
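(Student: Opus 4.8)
The plan is to exploit the fact that the functional $\beta_{ij,k}^e$ coincides, as a linear functional on $\splspace{3}{1}{\PST}$, with the original dual functional $\beta_{i,j,k}^t \in \splspacedb$, since both send $S$ to $\blossr{S}{t_{i,j,k}}(v_i, v_j, v_{ijk})$. By the duality relations \eqref{eq:c1_basis}, evaluating $\beta_{i,j,k}^t$ on any element of $\splspaceb$ returns the Kronecker delta, so for a reduced basis function $B$, written as a linear combination of the functions in $\splspaceb$, the value $\beta_{ij,k}^e(B) = \beta_{i,j,k}^t(B)$ is precisely the coefficient of $B_{i,j,k}^t$ in that combination. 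The task thus reduces to determining this single coefficient for each $B \in \redsplspaceb$.

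The decisive observation, which I would record first, is that because $t_{ijk} \in T_{ij}^e \setminus \Tsym$ is not symmetrically refined, the chosen point is $w_{ij,k} = v_{ijk}$, and substituting this into \eqref{eq:w_points} (or into \eqref{eq:w_point_boundary} when $e_{ij} \in E^b$) forces $\omega_{ij,k} = 0$, since $v_{ijk}$ is distinct from both $v_{ijk'}$ and $v_{k'}$ (respectively from $v_{ij}$). This vanishing weight is what makes all cross-terms disappear later.

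Next I would trace where $B_{i,j,k}^t$ can possibly occur among the reduced basis functions. The sub-triangle $t_{i,j,k}$ lies along the macro-edge $e_{ij}$, so $B_{i,j,k}^t$ enters only the edge-$e_{ij}$ recombinations \eqref{eq:basis_e_interior} or \eqref{eq:basis_e_boundary}; it never appears in the vertex functions $B_{i',r}^v$, nor in the combinations attached to the other two edges $e_{jk}, e_{ki}$, nor in any $B_{i'j'k'}^t$ except possibly the one attached to the triangle opposite $e_{ij}$ (when that triangle is symmetric, through the hidden edge-$e_{ij}$ term in \eqref{eq:basis_t}). In $B_{ij,k}^e$ its coefficient is $(1-\omega_{ij,k}) = 1$, giving $\beta_{ij,k}^e(B_{ij,k}^e)=1$; in every other function containing it the coefficient equals the cross-weight $\omega_{ij,k} = 0$, so $\beta_{ij,k}^e(B)=0$.

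The main obstacle is bookkeeping rather than conceptual: one must argue cleanly that $B_{i,j,k}^t$ does not sneak into the combinations associated with the edges $e_{jk}, e_{ki}$ or into the recombined function of a symmetric neighbour with a nonzero weight, and that every surviving occurrence outside $B_{ij,k}^e$ carries exactly the factor $\omega_{ij,k}$. Once the identity $\omega_{ij,k}=0$ coming from the non-symmetry of $t_{ijk}$ is in place, all such cross-terms drop out and the delta property follows immediately, in direct parallel with Lemmas~\ref{lem:biorthogonality_v} and~\ref{lem:biorthogonality_t}.
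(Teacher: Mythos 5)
Your proof is correct and follows essentially the same route as the paper: both identify $\beta_{ij,k}^e$ with the original dual functional $\beta_{i,j,k}^t$, expand each reduced basis function in the basis $\splspaceb$, and use the fact that $t_{ijk}\notin\Tsym$ forces $\omega_{ij,k}=0$, so that every occurrence of $B_{i,j,k}^t$ outside $B_{ij,k}^e$ (namely in $B_{ij,k'}^e$, in $B_{ijk'}^t$ when $t_{ijk'}\in\Tsym$, or in $B_{ij}^e$ for a boundary edge) carries a vanishing weight. Your explicit derivation of $\omega_{ij,k}=0$ from \eqref{eq:w_points} and \eqref{eq:w_point_boundary} is a small addition that the paper states without justification.
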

\begin{proof}
Let us first suppose that $e_{ij} \in E \setminus E^b$, i.e., $e_{ij}$ is an interior edge. By \eqref{eq:c1_basis_t}, applying $\beta_{ij,k}^e$ to \eqref{eq:basis_e_interior_t1} yields $1 - \omega_{ij,k}$. Since $t_{ijk} \not\in \Tsym$, the value of $\omega_{ij,k}$ is equal to $0$, and hence $\beta_{ij,k}^e(B_{ij,k}^e) = 1$. Similarly, for the second triangle $t_{ijk'} \in T_{ij}^e$, applying $\beta_{ij,k}^e$ to \eqref{eq:basis_e_interior_t2} yields $\beta_{ij,k}^e(B_{ij,k'}^e) = \omega_{ij,k} = 0$. Since $\beta_{ij,k}^e(B_{jk',i}^e) = 0$ and $\beta_{ij,k}^e(B_{k'i,j}^e) = 0$, this implies $\beta_{ij,k}^e(B_{ijk'}^t) = 0$ if $t_{ijk'} \in \Tsym$. Any other $B \in \redsplspaceb$ is a linear combination of the functions from $\splspaceb \setminus \tset{B_{i,j,k}^t, B_{j,i,k}^t, B_{i,j,k'}^t, B_{j,i,k'}^t}$, and hence \eqref{eq:c1_basis} implies $\beta_{ij,k}^e(B) = 0$.

If $e_{ij} \in E^b$, i.e., $e_{ij}$ is a boundary edge, then $\beta_{ij,k}^e(B_{ij,k}^e) = 1 - \omega_{ij,k}$ and $\beta_{ij,k}^e(B_{ij}^e) = \omega_{ij,k}$ by \eqref{eq:basis_e_boundary}, \eqref{eq:c1_basis_t}, and \eqref{eq:c1_basis_e}. Since $t_{ijk} \not\in \Tsym$, $\beta_{ij,k}^e(B_{ij,k}^e) = 1$ and $\beta_{ij,k}^e(B_{ij}^e) = 0$. Any other $B \in \redsplspaceb$ is a linear combination of the functions from $\splspaceb \setminus \tset{B_{i,j,k}^t, B_{j,i,k}^t, B_{i,j}^e, B_{j,i}^e}$, and hence \eqref{eq:c1_basis} implies $\beta_{ij,k}^e(B) = 0$.
\end{proof}

\begin{lemma}
\label{lem:biorthogonality_be}
Let $B \in \redsplspaceb$. For each $e_{ij} \in E^b$,
\begin{equation*}
\beta_{ij}^e(B) =
\begin{cases}
1 & \text{if} \ B = B_{ij}^e, \\
0 & \text{otherwise}.
\end{cases}
\end{equation*}
\end{lemma}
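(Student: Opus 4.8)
The plan is to follow the same biorthogonality pattern established in Lemmas~\ref{lem:biorthogonality_v}--\ref{lem:biorthogonality_e}, exploiting the fact that $\beta_{ij}^e(S) = \blossr{S}{e_{i,j}}(v_i, v_j, v_{ij})$ is built on the original dual functional $\beta_{i,j}^e$ (together with $\beta_{j,i}^e$ via the $\C{1}$ identification), and that the relevant reduced basis function $B_{ij}^e$ is defined in \eqref{eq:basis_e_boundary_e} by recombining $B_{i,j}^e$, $B_{j,i}^e$, and the triangle functions $B_{i,j,k}^t$, $B_{j,i,k}^t$. The key is to track how $\beta_{ij}^e$ acts on each constituent of \eqref{eq:basis_e_boundary_e} using the defining properties \eqref{eq:c1_basis_t} and \eqref{eq:c1_basis_e} of the original basis functions.

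First I would compute $\beta_{ij}^e(B_{ij}^e)$. By the definition in \eqref{eq:basis_e_boundary_e}, $B_{ij}^e = (B_{i,j}^e + B_{j,i}^e) + \omega_{ij,k}(B_{i,j,k}^t + B_{j,i,k}^t)$. Since $\beta_{ij}^e(S) = \blossr{S}{e_{i,j}}(v_i, v_j, v_{ij})$ agrees with the original functional $\beta_{i,j}^e$ on the edge $e_{i,j}$, the term $B_{i,j}^e$ contributes $1$ and $B_{j,i}^e$ contributes $0$ by \eqref{eq:c1_basis_e} (the functional $\beta_{j,i}^e$ evaluated on $B_{i,j}^e$ and symmetrically); I would verify that the relevant blossom reads off exactly the coefficient dual to $B_{i,j}^e$. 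The triangle terms $B_{i,j,k}^t$ and $B_{j,i,k}^t$ are evaluated via \eqref{eq:c1_basis_t}: since $\beta_{ij}^e$ is a blossom restricted to the edge $e_{i,j}$ rather than a triangle functional $\beta_{i,j,k}^t$, I expect these contributions to vanish, leaving $\beta_{ij}^e(B_{ij}^e) = 1$.

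Next, for $B \in \redsplspaceb \setminus \tset{B_{ij}^e}$, I would argue $\beta_{ij}^e(B) = 0$. Every such $B$ is a linear combination of original basis functions drawn from $\splspaceb \setminus \tset{B_{i,j}^e, B_{j,i}^e}$ — in particular none of them carries the $B_{i,j}^e$ or $B_{j,i}^e$ component that $\beta_{ij}^e$ is dual to. Here I would invoke the full duality relations in \eqref{eq:c1_basis} together with the identification $\beta_{ij}^e = \beta_{i,j}^e = \beta_{j,i}^e$ on the edge: since $B_{ijk}^t$, $B_{ij,k}^e$, and the vertex functions $B_{i,r}^v$ are all built from basis functions other than $B_{i,j}^e, B_{j,i}^e$, applying $\beta_{ij}^e$ kills them. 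The one subtlety is that functions like $B_{ij,k}^e$ (defined in \eqref{eq:basis_e_boundary_t}) and any symmetric $B_{ijk}^t$ adjacent to $e_{ij}$ do involve the triangle functions $B_{i,j,k}^t, B_{j,i,k}^t$; I would confirm those are annihilated because $\beta_{ij}^e$ is an edge blossom and hence dual only to the boundary-edge basis functions, not the triangle ones.

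The main obstacle I anticipate is bookkeeping the interaction between the edge functional $\beta_{ij}^e$ and the triangle functions $B_{i,j,k}^t, B_{j,i,k}^t$ that appear with weight $\omega_{ij,k}$ in the definition \eqref{eq:basis_e_boundary_e}, and that also reappear inside neighbouring reduced functions. I must check that $\beta_{ij}^e$ applied to a triangle basis function indeed gives zero — this should follow because the restriction of $B_{i,j,k}^t$ to $e_{i,j}$, blossomed at $(v_i, v_j, v_{ij})$, reads a dual coefficient keyed to $\beta_{i,j,k}^t \neq \beta_{i,j}^e$, so \eqref{eq:c1_basis_t} forces it to vanish. Once this single annihilation fact is nailed down, the rest is the routine linear-combination argument already used in the preceding three lemmas.
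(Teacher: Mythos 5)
Your proposal is correct and follows essentially the same route as the paper's proof: both expand $B_{ij}^e$, $B_{ij,k}^e$, and (for $t_{ijk} \in \Tsym$) $B_{ijk}^t$ into the original basis $\splspaceb$ and invoke the duality relations \eqref{eq:c1_basis}, using that $\beta_{ij}^e$ coincides with $\beta_{i,j}^e \in \splspacedb$ so that it annihilates every original basis function except $B_{i,j}^e$ (in particular the triangle functions $B_{i,j,k}^t$, $B_{j,i,k}^t$), before handling all remaining $B \in \redsplspaceb$ by the same linear-combination argument. The "subtlety" you flag is resolved exactly as in the paper, via \eqref{eq:c1_basis_t}.
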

\begin{proof}
By \eqref{eq:basis_e_boundary}, \eqref{eq:c1_basis_t}, and \eqref{eq:c1_basis_e}, we obtain $\beta_{ij}^e(B_{ij}^e) = 1$ and $\beta_{ij}^e(B_{ij,k}^e) = 0$.  Since $\beta_{ij}^e(B_{jk,i}^e) = 0$ and $\beta_{ij}^e(B_{ki,j}^e) = 0$, this implies $\beta_{ij}^e(B_{ijk}^t) = 0$ if $t_{ijk} \in \Tsym$. Any other $B \in \redsplspaceb$ is a linear combination of the functions from $\splspaceb \setminus \tset{B_{i,j,k}^t, B_{j,i,k}^t, B_{i,j}^e, B_{j,i}^e}$, and hence \eqref{eq:c1_basis} implies $\beta_{ij}^e(B) = 0$.
\end{proof}

The following theorem provides a characterization of $\redsplspace{3}{1}{\PST}$.
\begin{theorem}
\label{th:red_basis}
The set $\redsplspaceb$ is a basis of $\redsplspace{3}{1}{\PST}$. Every $S \in \redsplspace{3}{1}{\PST}$ can be uniquely expressed as
\begin{align*}
S =& \sum_{v_i \in V} \sum_{r = 1}^3 \beta_{i,r}^v(S) B_{i,r}^v + \sum_{t_{ijk} \in \Tsym} \beta_{ijk}^t(S) B_{ijk}^t \\
&+ \sum_{e_{ij} \in E} \ \sum_{t_{ijk} \in T_{ij}^e \setminus \Tsym} \beta_{ij,k}^e(S) B_{ij,k}^e + \sum_{e_{ij} \in E^b} \beta_{ij}^e(S) B_{ij}^e.
\end{align*}
\end{theorem}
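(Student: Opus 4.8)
The plan is to exploit the biorthogonality relations collected in Lemmas~\ref{lem:biorthogonality_v}--\ref{lem:biorthogonality_be}, which together express that the functionals associated above with the functions of $\redsplspaceb$ form a system dual to those functions. Concretely, I would index both the functionals and the functions by the common index set comprising the pairs $(v_i,r)$ with $v_i\in V$ and $r\in\set{1,2,3}$, the symmetric triangles $t_{ijk}\in\Tsym$, the edge--triangle pairs $(e_{ij},t_{ijk})$ with $t_{ijk}\in T_{ij}^e\setminus\Tsym$, and the boundary edges $e_{ij}\in E^b$. The four lemmas assert precisely that applying any one of these functionals to any one of the functions of $\redsplspaceb$ returns $1$ when the indices coincide and $0$ otherwise, so that the full pairing matrix is the identity.

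First I would record that $\redsplspaceb$ spans $\redsplspace{3}{1}{\PST}$, which holds by the very definition of $\redsplspace{3}{1}{\PST}$ as the span of $\redsplspaceb$. It therefore remains only to prove linear independence. Suppose a linear combination of the functions in $\redsplspaceb$ with coefficients $c_B$ vanishes identically. Applying an arbitrary functional $\beta$ from the dual system to this combination and invoking linearity together with the biorthogonality from Lemmas~\ref{lem:biorthogonality_v}--\ref{lem:biorthogonality_be}, every term drops out except the one whose index matches $\beta$, so the corresponding coefficient equals $0$. Ranging over all functionals forces all $c_B=0$, which establishes linear independence and hence that $\redsplspaceb$ is a basis of $\redsplspace{3}{1}{\PST}$.

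For the representation formula, I would then use that every $S\in\redsplspace{3}{1}{\PST}$ admits a unique expansion $S=\sum_B c_B B$ in this basis. To identify the coefficient multiplying a given $B$, apply the dual functional indexed in the same way as $B$; by linearity and the biorthogonality relations, its value at $S$ collapses to $c_B$. Substituting $c_{B_{i,r}^v}=\beta_{i,r}^v(S)$, $c_{B_{ijk}^t}=\beta_{ijk}^t(S)$, $c_{B_{ij,k}^e}=\beta_{ij,k}^e(S)$, and $c_{B_{ij}^e}=\beta_{ij}^e(S)$ reproduces the stated expression.

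No genuine obstacle remains at this level: the substantive work has already been carried out in the biorthogonality lemmas, and the theorem is the routine consequence of possessing a complete biorthogonal system. The only point deserving care is to verify that the functionals and the functions are indexed by the same set---equivalently, that the dual system is square---so that biorthogonality pins down every coefficient uniquely; this is exactly the matching of the cardinality $3|V|+|\Tsym|+3|T\setminus\Tsym|+|E^b|$ recorded when $\redsplspaceb$ was introduced.
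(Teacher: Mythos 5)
Your proposal is correct and follows essentially the same route as the paper, whose proof simply states that the theorem follows from Lemmas~\ref{lem:biorthogonality_v}--\ref{lem:biorthogonality_be}; you have just spelled out the standard linear-algebra argument (span by definition, linear independence and coefficient identification via the biorthogonal functionals) that the paper leaves implicit.
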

\begin{proof}
The statement follows from Lemmas~\ref{lem:biorthogonality_v}--\ref{lem:biorthogonality_be}.
\end{proof}

\subsection{Properties of the Reduced Basis Functions}

The reduction of $\splspaceb$ to $\redsplspaceb$ is designed in a way that the functions in $\redsplspaceb$ preserve the main properties of the functions comprising $\splspaceb$. For optimal approximation, a crucial property is the reproduction of polynomials to the highest possible degree.

\begin{theorem}
\label{th:poly_reproduction}
The space $\pp_3$ is a subspace of $\redsplspace{3}{1}{\PST}$.
\end{theorem}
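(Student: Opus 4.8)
The plan is to show that every cubic polynomial $P \in \pp_3$ lies in $\redsplspace{3}{1}{\PST}$ by exhibiting it as a specific linear combination of the reduced basis functions from $\redsplspaceb$. Since we already know from Theorem~\ref{th:c1_basis} that $\pp_3 \subset \splspace{3}{1}{\PST}$ (a polynomial is trivially $\C{1}$ and piecewise cubic on $\PST$), the content of the claim is that the polynomial survives the reduction, i.e., that its expansion in the full basis $\splspaceb$ collapses onto the subspace spanned by $\redsplspaceb$. Equivalently, I would verify that the reduced functionals associated with $\redsplspaceb$ suffice to recover $P$, which amounts to checking that $P$ automatically satisfies the $\C{2}$ super-smoothness constraints that were imposed when passing from $\splspaceb$ to $\redsplspaceb$.

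First I would take an arbitrary $P \in \pp_3$ and apply the reduced functionals $\beta_{i,r}^v$, $\beta_{ijk}^t$, $\beta_{ij,k}^e$, and $\beta_{ij}^e$ to it, forming the candidate spline
\begin{equation*}
\tilde{S} = \sum_{v_i \in V} \sum_{r=1}^3 \beta_{i,r}^v(P)\, B_{i,r}^v + \sum_{t_{ijk} \in \Tsym} \beta_{ijk}^t(P)\, B_{ijk}^t + \sum_{e_{ij} \in E} \sum_{t_{ijk} \in T_{ij}^e \setminus \Tsym} \beta_{ij,k}^e(P)\, B_{ij,k}^e + \sum_{e_{ij} \in E^b} \beta_{ij}^e(P)\, B_{ij}^e,
\end{equation*}
which by construction lies in $\redsplspace{3}{1}{\PST}$. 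The goal then becomes to prove $\tilde{S} = P$. The natural route is to show that $\tilde{S}$ and $P$ agree under the \emph{full} dual basis $\splspacedb$ of Theorem~\ref{th:c1_basis}, since that basis separates points of $\splspace{3}{1}{\PST}$. The values of the original functionals on $\tilde{S}$ can be read off from the defining combinations in \eqref{eq:basis_e_interior}, \eqref{eq:basis_e_boundary}, and \eqref{eq:basis_t} together with the biorthogonality relations \eqref{eq:c1_basis}; I would then compare these against $\beta(P)$ for each $\beta \in \splspacedb$.

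The decisive point, and where the symmetry hypothesis enters, is that a global polynomial $P$ is in particular $\C{2}$ across every edge of $\PST$, so by Proposition~\ref{pro:c2_smoothness_et} it satisfies $\beta_{i,j,k}^t(P) = \beta_{j,i,k}^t(P)$ and the companion relation in \eqref{eq:c2_smoothness_et_e}, and by Proposition~\ref{pro:c2_smoothness_vt} (invoking Lemmas~\ref{lem:c2_smooth_vt_bloss_v} and \ref{lem:c2_smooth_vt_bloss_t} on the symmetrically refined triangles) it satisfies the vertex-edge constraints on every $t_{ijk} \in \Tsym$. These are exactly the identities that were used to combine the original B-splines into the reduced ones. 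I expect the main obstacle to be the bookkeeping on a triangle $t_{ijk} \in \Tsym$ shared between the $B_{ijk}^t$ term and the edge terms of its non-symmetric neighbours: one must confirm that the weights $\omega$ and $\nu$ (related through Lemmas~\ref{lem:relations_omega_nu} and \ref{lem:relations_omega_nu_boundary}) reassemble the original coefficients $\beta_{i,j,k}^t(P)$, $\beta_{i,j,k'}^t(P)$, etc., exactly, so that $\tilde{S}$ matches $P$ on every original functional and hence $\tilde{S} = P$. Once this coefficient reconstruction is checked edge-by-edge and triangle-by-triangle, the equality $P = \tilde{S} \in \redsplspace{3}{1}{\PST}$ follows, completing the proof.
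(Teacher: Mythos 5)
Your strategy is sound and would complete to a correct proof; it is essentially the paper's computation run in the opposite direction. The paper takes the expansion of $P$ in the full basis from Theorem~\ref{th:c1_basis} and regroups it edge by edge: since the blossom of the single polynomial $P$ is symmetric, $\beta_{i,j,k}^t(P)=\beta_{j,i,k}^t(P)=\bloss{P}(v_i,v_j,v_{ijk})$, and the decisive step is to expand $\bloss{P}(v_i,v_j,v_{ijk})$ affinely in the third argument via \eqref{eq:w_points}, after which the four terms attached to an interior edge collapse into $\bloss{P}(v_i,v_j,w_{ij,k})B_{ij,k}^e+\bloss{P}(v_i,v_j,w_{ij,k'})B_{ij,k'}^e$, which is then recognized as the reduced expansion. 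You instead form the reduced-form candidate $\tilde S$ and check $\beta(\tilde S)=\beta(P)$ for every $\beta\in\splspacedb$; unfolding each reduced function into $\splspaceb$ and comparing coefficients is the same algebra transposed, at the (harmless) extra cost of invoking the uniqueness statement of Theorem~\ref{th:c1_basis}. Two corrections to how you frame the key step. First, the observation that $P$ satisfies the $\C{2}$ constraints of Propositions~\ref{pro:c2_smoothness_et} and~\ref{pro:c2_smoothness_vt} cannot by itself place $P$ in $\redsplspace{3}{1}{\PST}$: that space is defined as the \emph{span} of $\redsplspaceb$, not as the set of splines satisfying those constraints, so the coefficient reconstruction you defer to ``bookkeeping'' is not a technicality but the entire content of the theorem. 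Second, that reconstruction is cleaner than you anticipate and does not need the $\nu$-weights or Lemmas~\ref{lem:relations_omega_nu} and~\ref{lem:relations_omega_nu_boundary} at all: for instance, when $t_{ijk},t_{ijk'}\in\Tsym$, the only reduced functions contributing to $\beta_{i,j,k}^t(\tilde S)$ give $(1-\omega_{ij,k})\beta_{ijk}^t(P)+\omega_{ij,k}\beta_{ijk'}^t(P)$, and this equals $\bloss{P}(v_i,v_j,v_{ijk})=\beta_{i,j,k}^t(P)$ directly by \eqref{eq:w_points} and affinity of the blossom, while in the mixed case $t_{ijk}\notin\Tsym$ one simply has $\omega_{ij,k}=0$, and the boundary case follows analogously from \eqref{eq:w_point_boundary}; blossom symmetry of $P$ then settles the paired functionals $\beta_{j,i,k}^t$ and $\beta_{j,i}^e$.
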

\begin{proof}
Suppose $P \in \pp_3$. Clearly, $P \in \splspace{3}{1}{\PST}$, and hence it can be expressed as described in Theorem~\ref{th:c1_basis}. Let $e_{ij} \in E$ be an edge of $\tri$. We consider only the case when $e_{ij} \in E \setminus E^b$, i.e., $e_{ij}$ is an interior edge of $\tri$, as the reasoning for a boundary edge is analogous. Let $t_{ijk}, t_{ijk'} \in T_{ij}^e$ be the triangles of $\tri$ attached to $e_{ij}$. By \eqref{eq:beta_t}, $\beta_{i,j,k}^t(P) = \beta_{j,i,k}^t(P)$ and $\beta_{i,j,k'}^t(P) = \beta_{j,i,k'}^t(P)$. Then,
\begin{align*}
\beta_{i,j,k}^t(P) B_{i,j,k}^t + \beta_{j,i,k}^t(P) B_{j,i,k}^t &= \bloss{P}(v_i, v_j, v_{ijk}) \bracket{B_{i,j,k}^t + B_{j,i,k}^t}, \\
\beta_{i,j,k'}^t(P) B_{i,j,k'}^t + \beta_{j,i,k'}^t(P) B_{j,i,k'}^t &= \bloss{P}(v_i, v_j, v_{ijk'}) \bracket{B_{i,j,k'}^t + B_{j,i,k'}^t}.
\end{align*}
Moreover, we can use \eqref{eq:w_points} and the affinity of the blossom in the third argument to express
\begin{align*}
\bloss{P}(v_i, v_j, v_{ijk}) &= (1 - \omega_{ij,k}) \bloss{P}(v_i, v_j, w_{ij,k}) + \omega_{ij,k} \bloss{P}(v_i, v_j, w_{ij,k'}), \\
\bloss{P}(v_i, v_j, v_{ijk'}) &= (1 - \omega_{ij,k'}) \bloss{P}(v_i, v_j, w_{ij,k'}) + \omega_{ij,k'} \bloss{P}(v_i, v_j, w_{ij,k}).
\end{align*}
Hence, 
$$\beta_{i,j,k}^t(P) B_{i,j,k}^t + \beta_{j,i,k}^t(P) B_{j,i,k}^t + \beta_{i,j,k'}^t(P) B_{i,j,k'}^t + \beta_{j,i,k'}^t(P) B_{j,i,k'}^t$$ 
is by \eqref{eq:basis_e_interior} equal to
\begin{equation*}
\bloss{P}(v_i, v_j, w_{ij,k}) B_{ij,k}^e + \bloss{P}(v_i, v_j, w_{ij,k'}) B_{ij,k'}^e.
\end{equation*}
Since
\begin{equation*}
\bloss{P}(v_i, v_j, w_{ij,k}) =
\begin{cases}
\beta_{ijk}^t(P) & \text{if} \ t_{ijk} \in \Tsym, \\
\beta_{ij,k}^e(P) & \text{if} \ t_{ijk} \in T \setminus \Tsym,
\end{cases}
\end{equation*}
and
\begin{equation*}
\bloss{P}(v_i, v_j, w_{ij,k'}) =
\begin{cases}
\beta_{ijk'}^t(P) & \text{if} \ t_{ijk'} \in \Tsym, \\
\beta_{ij,k'}^e(P) & \text{if} \ t_{ijk'} \in T \setminus \Tsym,
\end{cases}
\end{equation*}
the statement follows from the definitions of $B_{ijk}^t$ and $B_{ijk'}^t$.
\end{proof}

The functions from $\redsplspaceb$ also possess the B-spline properties observed for $\splspaceb$.

\begin{theorem}
The functions in $\redsplspaceb$ are locally supported, nonnegative, and form a partition of unity. Moreover, the polynomials $(x,y) \mapsto x$ and $(x,y) \mapsto y$ can be represented as
\begin{align*}
(x,y) =& \sum_{v_i \in V} \sum_{r = 1}^3 q_{i,r}^v B_{i,r}^v(x,y) + \sum_{t_{ijk} \in \Tsym} q_{ijk}^t B_{ijk}^t(x,y) \\
&+ \sum_{e_{ij} \in E} \ \sum_{t_{ijk} \in T_{ij}^e \setminus \Tsym} q_{ij,k}^e B_{ij,k}^e(x,y) + \sum_{e_{ij} \in E^b} q_{ij}^e B_{ij}^e(x,y),
\end{align*}
where $q_{i,r}^v$ is defined in Section~\ref{sec:c1_basis} and
\begin{align*}
q_{ijk}^t &= \tfrac{1}{3} v_i + \tfrac{1}{3} v_j + \tfrac{1}{3} v_k, \\
q_{ij,k}^e &= \tfrac{1}{3} v_i + \tfrac{1}{3} v_j + \tfrac{1}{3} v_{ijk}, \\
q_{ij}^e &= \tfrac{1}{3} v_i + \tfrac{1}{3} v_j + \tfrac{1}{3} v_{ij}.
\end{align*}
\end{theorem}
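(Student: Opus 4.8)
The plan is to establish the four asserted properties separately, relying on the structural facts already recorded for $\splspaceb$ and on Theorems~\ref{th:poly_reproduction} and~\ref{th:red_basis}. For local support and nonnegativity, I would first observe that by the defining formulas \eqref{eq:basis_e_interior}, \eqref{eq:basis_e_boundary}, and \eqref{eq:basis_t}, every function in $\redsplspaceb$ is a finite linear combination of functions from $\splspaceb$. Local support then follows at once, since the functions of $\splspaceb$ are locally supported; the precise supports are the ones already described in Section~\ref{sec:ss_basis} (e.g.\ $B_{ij,k}^e$ vanishes off $t_{ijk}\cup t_{ijk'}$, and $B_{ijk}^t$ is supported on $t_{ijk}$ together with its edge-neighbours). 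For nonnegativity I would note that all combination coefficients occurring in \eqref{eq:basis_e_interior}, \eqref{eq:basis_e_boundary}, \eqref{eq:basis_t} have the form $\omega_{ij,k}$, $\omega_{ij,k'}$, $1-\omega_{ij,k}$, $1-\omega_{ij,k'}$, or $1$, and hence lie in $[0,1]$ because the $\omega$'s are convex weights; nonnegative combinations of the nonnegative functions in $\splspaceb$ are again nonnegative.

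For the partition of unity, the key is a bookkeeping identity showing that summing all of $\redsplspaceb$ reproduces the sum of all of $\splspaceb$, which equals $1$ by Theorem~\ref{th:c1_basis}. I would first record two edge-level identities obtained by direct expansion: for an interior edge $e_{ij}$ with attached triangles $t_{ijk},t_{ijk'}$,
\[
B_{ij,k}^e + B_{ij,k'}^e = B_{i,j,k}^t + B_{j,i,k}^t + B_{i,j,k'}^t + B_{j,i,k'}^t,
\]
since the coefficients of each triangle-pair add to $1$; and for a boundary edge $e_{ij}$ with attached triangle $t_{ijk}$,
\[
B_{ij,k}^e + B_{ij}^e = B_{i,j,k}^t + B_{j,i,k}^t + B_{i,j}^e + B_{j,i}^e.
\]
Using $B_{ijk}^t = B_{ij,k}^e + B_{jk,i}^e + B_{ki,j}^e$, I would then rewrite $\sum_{t_{ijk}\in\Tsym} B_{ijk}^t + \sum_{e_{ij}\in E}\sum_{t_{ijk}\in T_{ij}^e\setminus\Tsym} B_{ij,k}^e$ as the single sum $\sum_{e_{ij}\in E}\sum_{t_{ijk}\in T_{ij}^e} B_{ij,k}^e$ over all edge–triangle incidences and apply the two identities. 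Since each macro-triangle contributes, across its three edges, exactly its six refined-triangle functions, and each boundary macro-edge contributes its two refined boundary-edge functions, the grand total collapses to $\sum_{v_i,r} B_{i,r}^v + \sum_{\PSR{T}} B_{i,j,k}^t + \sum_{\PSR{E}^b} B_{i,j}^e = 1$.

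For linear precision, both coordinate functions lie in $\pp_3 \subseteq \redsplspace{3}{1}{\PST}$ by Theorem~\ref{th:poly_reproduction}, so by the unique expansion in Theorem~\ref{th:red_basis} it suffices to evaluate each dual functional on the identity map $\mathrm{id}\colon p \mapsto p$. The crucial observation is that the blossom of an affine map is the barycentre, $\bloss{\mathrm{id}}(p_1,p_2,p_3) = \tfrac{1}{3}(p_1+p_2+p_3)$, which is immediate from the uniqueness of the blossom since the right-hand side is symmetric, affine in each argument, and restricts to $\mathrm{id}$ on the diagonal. Substituting into the definitions of $\beta_{i,r}^v$, $\beta_{ijk}^t$, $\beta_{ij,k}^e$, and $\beta_{ij}^e$ then returns exactly $q_{i,r}^v$, $q_{ijk}^t$, $q_{ij,k}^e$, and $q_{ij}^e$; for the vertex case one checks $\tfrac{1}{3}\bigl(v_i+v_i+(v_i+3(q_{i,r}^v-v_i))\bigr) = q_{i,r}^v$.

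The main obstacle is the partition-of-unity bookkeeping: one must verify carefully that reorganizing the sum over $\redsplspaceb$ into a sum over edge–triangle incidences counts each original refined-triangle function $B_{i,j,k}^t$ and each refined boundary-edge function $B_{i,j}^e$ exactly once, handling symmetric and non-symmetric triangles and interior versus boundary edges uniformly, so that the total reduces cleanly to the partition of unity of $\splspaceb$. The remaining parts are routine once the blossom-of-an-affine-map formula is in hand.
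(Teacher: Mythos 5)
Your proposal is correct, and three of its four parts (local support, nonnegativity, and the representation of the coordinate functions) follow essentially the same route as the paper: the first two are read off from the defining formulas \eqref{eq:basis_e_interior}, \eqref{eq:basis_e_boundary}, \eqref{eq:basis_t} together with the corresponding properties of $\splspaceb$, and the linear precision is obtained by expanding $x$ and $y$ in the reduced basis via Theorems~\ref{th:poly_reproduction} and~\ref{th:red_basis} and using that the blossom of an affine map is the barycentre of its arguments. Where you genuinely diverge is the partition of unity. The paper treats it by the same mechanism as linear precision: the constant $1$ lies in $\pp_3 \subseteq \redsplspace{3}{1}{\PST}$ by Theorem~\ref{th:poly_reproduction}, so Theorem~\ref{th:red_basis} expands it in $\redsplspaceb$ with coefficients $\bloss{1}(p_1,p_2,p_3)=1$, and that expansion is precisely the statement $\sum_{B\in\redsplspaceb} B = 1$. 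You instead prove the identity $\sum_{B\in\redsplspaceb} B = \sum_{B\in\splspaceb} B$ by direct bookkeeping: your two edge-level identities are correct (the coefficients $(1-\omega_{ij,k})+\omega_{ij,k}$ and $(1-\omega_{ij,k'})+\omega_{ij,k'}$ collapse to $1$), the reorganization of the $\Tsym$-sum together with the $T_{ij}^e\setminus\Tsym$-sum into a single sum over edge--triangle incidences is valid since $B_{ijk}^t$ splits by definition into one $B_{ij,k}^e$ per incident edge, and each function $B_{i,j,k}^t$ and $B_{i,j}^e$ of $\splspaceb$ is hit exactly once, so the total collapses to the partition of unity of $\splspaceb$ guaranteed by Theorem~\ref{th:c1_basis}. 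Your route costs more combinatorial care but makes the partition of unity independent of Theorems~\ref{th:poly_reproduction} and~\ref{th:red_basis} (only the definitions and the known partition of unity of $\splspaceb$ are needed); the paper's route is shorter and handles partition of unity and linear precision by one uniform argument, at the price of invoking the full strength of the two preceding theorems.
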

\begin{proof}
The properties of local support and nonnegativity for the functions in $\redsplspaceb$ follow from the way these functions are defined in \eqref{eq:basis_e_interior}, \eqref{eq:basis_e_boundary}, \eqref{eq:basis_t} and the fact that the functions in $\splspaceb$ possess these properties. The partition of unity and the representation of the linear polynomials are corollaries of Theorems~\ref{th:red_basis} and \ref{th:poly_reproduction} and the properties of the blossom. More precisely, $\bloss{1}(p_1, p_2, p_3) = 1$ and $(\bloss{x}(p_1, p_2, p_3), \bloss{y}(p_1, p_2, p_3)) = \frac{1}{3} p_1 + \frac{1}{3} p_2 + \frac{1}{3} p_3$ for any three points $p_1, p_2, p_3 \in \RR^2$.
\end{proof}

\subsection{Comparison of the Number of Degrees of Freedom}

The advantage of the basis $\redsplspaceb$ is that it provides the same quality of approximation as $\splspaceb$ with a considerably smaller number of degrees of freedom. To argue that, let us approximately compare the dimensions of $\redsplspace{3}{1}{\PST}$ and $\splspace{3}{1}{\PST}$. We use the estimates $|E| \approx 3 |V|$ and $|T| + |E^b| \approx 2|V|$; see, e.g., \cite{lai_07}.

The dimension of $\splspace{3}{1}{\PST}$ is $3|V| + 4|E| \approx 15 |V|$. On the other hand, the dimension of $\redsplspace{3}{1}{\PST}$ depends on $|\Tsym|$, the number of symmetrically refined triangles. If $\Tsym = \emptyset$, then the dimension of $\redsplspace{3}{1}{\PST}$ is $3|V| + 2|E| \approx 9 |V|$. In this case, the space corresponds to the Powell--Sabin spline space studied in \cite{ps3_speleers_15} and the basis $\redsplspaceb$ matches the one proposed therein. If $\Tsym = T$, then the dimension of $\redsplspace{3}{1}{\PST}$ is $3|V| + |T| + |E^b| \approx 5|V|$. If the triangulation $\tri$ is three-directional, the basis $\redsplspaceb$ agrees with the constructions presented in \cite{ps3_groselj_21}, except for the basis functions associated with the boundary edges.

\begin{figure}[t!]
\centering
\includegraphics[width=0.48\textwidth]{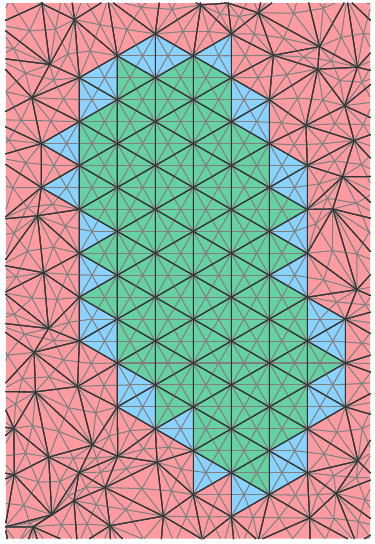}~
\includegraphics[width=0.48\textwidth]{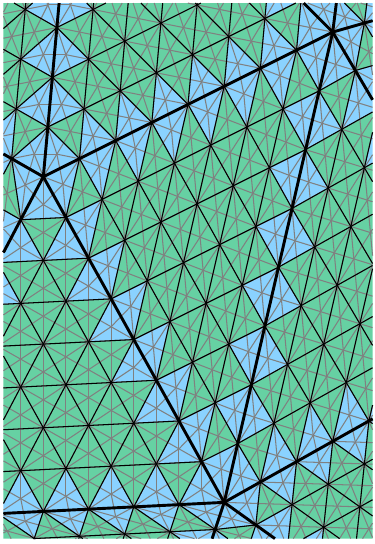}
\caption{Excerpts from two partially structured triangulations refined with a Powell--Sabin refinement. On the left is a triangulation, a part of which is three-directional. On the right is a triangulation obtained by uniformly splitting an initial unstructured triangulation (depicted with thicker lines).}
\label{fig:pstri_example}
\end{figure}

\begin{example}
Let us consider the reduced spline space over a partially structured triangulation, an excerpt of which is shown in Figure~\ref{fig:pstri_example} (left). The super-smooth basis functions on the region determined by red colored triangles are partially $\C{2}$ smooth in the interior of these triangles and match the basis functions introduced in \cite{ps3_speleers_15}. The green color indicates symmetrically refined triangles and so in their interior the super-smooth basis functions are $\C{2}$ smooth. Since in this case the triangulation determined by the green colored triangles is three-directional, the basis functions correspond to the basis functions investigated in \cite{ps3_groselj_21} (with a possible exception of the basis functions associated with the vertices that are not shared only by green colored triangles). The triangles colored in blue represent the transition area. Following the previous discussion, we can expect an approximate reduction in the number of basis functions by a factor between $\frac{5}{3}$ and $3$ compared with the number of functions in the basis $\splspaceb$ of the full $\C{1}$ spline space.
\end{example}

\begin{example}
Starting from an arbitrary triangulation, application of one or more uniform refinements  results in (local) symmetric configurations inside each of the original triangles; see Figure~\ref{fig:pstri_example} (right) for an example. The green colored triangles are symmetrically refined, and the blue colored triangles are part of the transition area. Such refined triangulations were analyzed in \cite{ps3_groselj_23}. The corresponding (reduced) cubic spline spaces were tested in the context of least squares approximation and finite element approximation for second and fourth order boundary value problems. Note that the definition of the set of triangles that are part of the symmetric configurations in \cite{ps3_groselj_23}, denoted by $T_{\mathrm{sym}}$, is more restrictive than the definition of $\Tsym$ used here. Indeed, it holds $T_{\mathrm{sym}}\subseteq\Tsym$.
\end{example}

\section{Conclusion}
\label{sec:conclusion}

In this paper, we have analyzed $\C{2}$ super-smoothness conditions of a previously established $\C{1}$ cubic spline space on a Powell--Sabin refined triangulation. This has been done by employing a B-spline basis for $\C{1}$ splines with a convenient dual representation developed in \cite{ps3_groselj_17}. The representation is based on blossoming of polynomials obtained by restricting a spline to the triangles of the Powell--Sabin refinement.

Blossoming offers an elegant and concise way of expressing the smoothness conditions between polynomial joints. Here this technique has been utilized to identify the $\C{2}$ smoothness conditions between the functionals of the dual basis. Some of these conditions can be enforced without difficulty on general triangulations. Others are more involved but greatly simplify if the triangulation and its corresponding Powell--Sabin refinement possess certain symmetries.

It has also been shown how the $\C{2}$ smoothness constraints can be integrated into the spline representation by reducing the set of basis functions. The proposed super-smooth B-spline basis functions are obtained by recombination of the original $\C{1}$ basis functions and incorporate the $\C{2}$ smoothness conditions to an extent that depends on the geometry of the underlying refinement.

The reduced cubic spline space obtained as the span of the constructed super-smooth B-spline basis functions contains all polynomials up to degree three and maintains the cubic precision of the full $\C{1}$ spline space. Such space offers the usual flexibility of unstructured spline methods for handling irregularly partitioned parts of the domain, while the partial structuredness of the triangulation, which is common in practice, is exploited by reducing the number of degrees of freedom. 

Starting from an arbitrary triangulation, application of one or more uniform refinements results in (local) symmetric configurations inside each of the original triangles. The implementation of reduced cubic spline spaces on such triangulations was investigated in \cite{ps3_groselj_23}. It was tested in the context of least squares approximation and finite element approximation, and the optimal convergence order was illustrated for those reduced spaces. In this paper, we have provided a more general framework to achieve space reductions by imposing super-smoothness. As such, we envision it as a versatile tool for approximation purposes.

\section*{Acknowledgements}
J.~Gro\v{s}elj was partially supported by the research programme P1-0294 of Javna agencija za znanstvenoraziskovalno in inovacijsko dejavnost Republike Slovenije (ARIS). 
H.~Speleers was supported in part by a GNCS 2022 project (CUP E55F22000270001) of Gruppo Nazionale per il Calcolo Scientifico -- Istituto Nazionale di Alta Matematica (GNCS -- INdAM), by the MUR Excellence Department Project MatMod@TOV (CUP E83C23000330006) awarded to the Department of Mathematics of the University of Rome Tor Vergata, and by the National Research Center in High Performance Computing, Big Data and Quantum Computing (CUP E83C22003230001).

\bibliography{references}

\end{document}